\documentclass{amsart}
\usepackage[utf8]{inputenc}
\usepackage[dvipsnames]{xcolor}
\usepackage{amsmath}
\usepackage{amsthm}
\usepackage{thmtools}
\usepackage{amssymb}
\usepackage{hyperref}
\usepackage{wasysym}
\usepackage{tikz}
\usepackage{tikz-cd}
\usetikzlibrary{arrows.meta, decorations.markings}
\usepackage{pdfpages}
\usepackage{amssymb}
\usepackage{mathtools}
\usepackage{enumitem}
\usepackage{mathrsfs}
\usepackage[all]{xy}

\usepackage{mathtools}
\usepackage{hyperref}
\usepackage{cleveref}
\usepackage{url}
\usepackage{bbm}
\usepackage{booktabs}
\usepackage{comment}

\usepackage[T1]{fontenc}
\usepackage[british]{babel}

\usepackage[margin=1.3in]{geometry}
\numberwithin{equation}{section} \numberwithin{figure}{section}

\DeclareMathOperator{\Gal}{Gal} 

\DeclareMathOperator{\id}{id}
\DeclareMathOperator{\Spec}{Spec}

\DeclareMathOperator{\SL}{SL}
\DeclareMathOperator{\PSL}{PSL}
\DeclareMathOperator{\Frob}{Frob}

\DeclareMathOperator{\Schemes}{\underline{Schemes}}
\DeclareMathOperator{\Sets}{\underline{Sets}}

\newcommand\FF{\mathbb{F}}
\newcommand\PP{\mathbb{P}}
\newcommand\ZZ{\mathbb{Z}}

\newcommand\QQ{\mathbb{Q}}
\newcommand\RR{\mathbb{R}}
\newcommand\CC{\mathbb{C}}

\newcommand{\cH}{\mathcal{H}}
\newcommand{\cU}{\mathcal{U}}

\newtheoremstyle{one}
{11pt}% measure of space to leave above the theorem. E.g.: 3pt
{11pt}% measure of space to leave below the theorem. E.g.: 3pt
{\slshape}% name of font to use in the body of the theorem
{}% measure of space to indent
{\sc}% name of head font
{.}% punctuation between head and body
{1mm}% space after theorem head
{}% Manually specify head

\theoremstyle{one} 
\newtheorem{theorem}{\textbf{Theorem}}[section]
\newtheorem{proposition}[theorem]{\textbf{Proposition}}
\newtheorem{lemma}[theorem]{\textbf{Lemma}}
\newtheorem{corollary}[theorem]{\textbf{Corollary}}

\theoremstyle{definition}

\newtheorem{remark}[theorem]{Remark}
\newtheorem*{notation}{Notation and conventions}
\newtheorem*{outline}{Outline of the paper}

\subjclass[2020]{11R58, %Arithmetic theory of algebraic function fields (primary)\\
11N45 (primary), %Asymptotic results on counting functions for algebraic and topological structures, \\
14G12, %Hasse principle, weak and strong approximation, Brauer-Manin obstruction \\
14H30, %Coverings of curves, fundamental group \\
14H10 (secondary) %Families, moduli of curves (algebraic) 
}

\title{The Hasse norm principle for $A_4$-quartic extensions of global function fields}
\author{Anand Deopurkar, Rachel Newton, Vaidehee Thatte, Rosa Winter}
\date{}

\begin{document}

\begin{abstract}
For a finite extension of global fields $K/k$, the norm map $N_{K/k}: K^{\times} \rightarrow k^{\times}$ extends to a map on idèle groups. The Hasse norm principle holds if every element of $k^\times$ that is a norm everywhere locally is also a norm globally. 
In this paper, we study the statistics of the Hasse norm principle in a setting that is out of reach in the number field context, namely that of $A_4$-quartic extensions. 
We show that failures of the Hasse norm principle are generally rare for \(A_4\)-quartic extensions of global function fields \(\mathbb{F}_q(t)\).
We achieve this by introducing a decorated Hurwitz space parametrising the failures of the Hasse norm principle and then using the Chebotarev density theorem to estimate their frequency.
\end{abstract}

\maketitle

\section{Introduction}\label{sec: Introduction}

Let $K/k$ be a finite extension of global fields.
Denote by \(N_{K/k} \colon K^{\times} \rightarrow k^{\times}\) the norm map and recall that it extends to a map on id\`{e}le groups \(N_{K/k} \colon \mathbb{A}_K^{\times} \to \mathbb{A}_k^{\times}\).
The \emph{Hasse norm principle} is said to hold for $K/k$ if, for an element of $k^\times$, being a global norm from $K$ is equivalent to being a norm everywhere locally. In other words, the Hasse norm principle holds if the so-called \emph{knot group}, defined as $$\mathfrak{K}(K/k)=(k^\times\cap N_{K/k}\mathbb{A}_K^\times)/N_{K/k}K^\times,$$ is trivial. Historically, most of the work on the Hasse norm principle has been presented in the context of number fields, although many of the results also extend to the setting of global fields of arbitrary characteristic. For example, Hasse's original norm theorem states that the Hasse norm principle holds for cyclic extensions of number fields, but it is also valid for cyclic extensions of global function fields. 

Beyond the cyclic case, the Hasse norm principle has been proven to hold, for example, when $[K:k]=p$ for $p$ prime \cite{bar:81}; when $[K:k]=2p$ for $p$ a prime with $2p-1$ not a prime power~\cite{ost:}; and when $[K:k]=n$ with Galois group either $D_n$ \cite{bar:81*1}, or $S_n$ \cite{kun.vos:84} (see also \cite{vos:88}), or $A_n$ with $n\geq5$ \cite{mac:20}. Again, while they may be stated for number fields, these results hold for extensions of global fields of arbitrary characteristic --- see the references in the proof of \Cref{lem:MN}.

On the other hand, Hasse already showed that not all extensions satisfy the Hasse norm principle by demonstrating that in the abelian extension $\mathbb{Q}(\sqrt{-3},\sqrt{13})/\QQ$ the number $3$ gives a counterexample \cite{has:31}. In \cite{bro.new:16}, Browning and the second author calculated the frequency of counterexamples
to the Hasse norm principle for a finite extension $K/\mathbb{Q}$ in terms of the cardinality of the knot group, which they computed for bicyclic extensions. In a similar vein, the authors of \cite{man.new.ozm.ea:21} computed the proportion of polynomials of degree $d$ in $\mathbb{F}_q[t]$ that are everywhere locally norms but fail to be global
norms from a finite extension $K/\mathbb{F}_q(t)$.

The Hasse norm principle has also been studied in families of number fields. Together with Frei and Loughran, the second author showed in \cite{fre.lou.new:18} that for every non-cyclic finite abelian group $G$ and number field $k$, there exists a $G$-extension of $k$ for which the Hasse norm principle fails. In the same paper, the authors showed that, when ordered by discriminant, the Hasse norm principle fails for a positive proportion of finite abelian $G$-extensions if and only if $G/G[p]$ is not cyclic, where $p$ is the smallest prime dividing $|G|$. These results were strengthened by Koymans and Rome in~\cite{koy.rom:24b}. Asymptotics for the number of biquadratic extensions of $\mathbb{Q}$ for which the Hasse norm principle fails were given by Rome \cite{rom:18}. Similar asymptotics were given for a larger class of extensions by Koymans and Rome in~\cite{koy.rom:24a}. 
Finally, Frei, Loughran and the second author proved that, for a fixed number field $k$ and a fixed abelian group $G$, the Hasse norm principle holds for 100\% of extensions of $k$ with Galois group $G$, when ordered by conductor~\cite{fre.lou.new:22}.

Moving beyond the abelian setting is challenging because results on counting number fields, even without imposing the local conditions needed to guarantee validity (or failure) of the Hasse norm principle, are few and far between. Moreover, in some non-abelian families where counting results are available, such as cubic~\cite{dav.heil:71, dat.wri:88, sha.tho:24} or sextic~\cite{bha.woo.08} extensions with Galois group $S_3$, the Hasse norm principle is known to hold for all extensions in the family, so there is no statistical analysis to be done. 
The only studies of the statistics of the Hasse norm principle in families of non-abelian extensions of number fields of which we are aware are those carried out by Macedo for $D_4$-octics~\cite{macedoPhD:21} and by the second author and Varma for extensions with Galois group $S_4$ or $S_5$~\cite{nv:}. 

In this paper, we demonstrate the advantages of the global function field setting by studying the statistics of the Hasse norm principle in a setting that is currently out of reach in the number field context, namely that of $A_4$-quartic extensions.
By \emph{an \(A_4\)-quartic extension}, we mean a degree 4 extension with Galois group $A_4$. Our main result is the following. 

  \begin{theorem}\label{thm: main result}
    Let \(q\) be a power of a prime \(p \geq 5\).
    Given \(\epsilon > 0\), if \(n\) is sufficiently large, then
    \[  \limsup_{m \to \infty} \frac{\left|\left\{\parbox{.62\textwidth}{Isomorphism classes of \(A_4\)-quartic extensions of \(\mathbb{F}_{q^m}(t)\) with discriminant of degree \(2n\) that fail the Hasse norm principle}\right\}\right|}{\left|\left\{\parbox{.62\textwidth}{Isomorphism classes of \(A_4\)-quartic extensions of \(\mathbb{F}_{q^m}(t)\) with discriminant of degree \(2n\)}\right\}\right|} \leq (4+\epsilon) n^{-1/3}.\]
  \end{theorem}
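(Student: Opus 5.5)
We reduce the statement to a point count on Hurwitz spaces, using the large-$q$ limit to make the Lang--Weil/Grothendieck--Lefschetz estimates exact up to the top cohomology.

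\emph{Step 1: a local criterion.} For an $A_4$-quartic $K/k$ with Galois closure $L$, Tate's description of the knot group gives $\mathfrak{K}(K/k)$ as the dual of the cokernel of $\bigoplus_v H^3(D_v,\mathbb{Z}) \to H^3(A_4,\mathbb{Z})$ restricted to $H^3(A_4,\mathbb{Z})\to H^3(A_4,\mathbb Z)$ via $\hat H^{-3}$ (Drakokhrust--Platonov type formula). Here $H^3(A_4,\mathbb{Z})\cong H^2(A_4,\mathbb{Q}/\mathbb{Z})\cong \mathbb{Z}/2$, and the only subgroups of $A_4$ with nonzero Schur multiplier mapping onto it are those containing the Klein four group $V_4$. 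Thus the Hasse norm principle fails if and only if no decomposition group $D_v\subset A_4$ contains $V_4$. In function field terms, since tame decomposition groups are metacyclic (inertia cyclic, normal in $D_v$ with cyclic quotient), $V_4\subseteq D_v$ can only happen at places $v$ where $D_v$ is $V_4$ or $A_4$. $D_v= A_4$ is impossible, since $A_4$ is not metacyclic with cyclic quotient of the required shape. $D_v=V_4$ forces $D_v$ to contain a nontrivial inertia subgroup $C_2$ with Frobenius acting by another involution. So HNP fails iff at every branch point with inertia of order $2$, the Frobenius does not generate together with inertia the full $V_4$; the unramified places are irrelevant. For $p\ge5$ all ramification is tame, so this is a condition on the branched cover $\PP^1\to\PP^1$ purely in terms of branch data.

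\emph{Step 2: the decorated Hurwitz space.} Let $\mathcal{H}_n$ be the Hurwitz space of $A_4$-covers of $\PP^1$ with discriminant degree $2n$; the count in the denominator is $|\mathcal{H}_n(\FF_{q^m})|$. Introduce a cover $\widetilde{\mathcal{H}}_n\to\mathcal{H}_n$ decorating each cover with, at each branch point of order-$2$ type, a choice of an involution in the centralizer of inertia, i.e.\ an element of $V_4/\langle\sigma\rangle$. The HNP-failing condition says that the Frobenius fixes the decoration given by $\sigma$ itself at each such point, rationally, with the branch point rational or coming in Galois orbits. Concretely, HNP fails iff a certain finite étale cover over $\mathcal{H}_n$ has a rational point fixed in a prescribed way. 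The ratio in the theorem is then, by Chebotarev density for the monodromy group of the covering $\widetilde{\mathcal{H}}_n\to\mathcal H_n$ (applied to components of $\mathcal{H}_n$, with Lang--Weil as $m\to\infty$), equal in the limsup to the proportion of elements $g$ of the monodromy group $G_n$ satisfying the combinatorial failure condition, averaged over components.

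\emph{Step 3: bounding the proportion.} This is the main obstacle. The monodromy group $G_n$ acts on the $2^{r}$ decorations where $r$ is the number of order-$2$ branch points (one needs $r$ large; typical covers with discriminant degree $2n$ have $r\approx$ proportional to $n$, and components where inertia is mostly of order $3$ contribute a small fraction, to be controlled). Then I would show that, via braid moves swapping the relevant points, $G_n$ contains the full group $(\ZZ/2)^{r}\rtimes S_r$ or at least acts on branch points as $S_r$ with enough sign independence. Then the probability that a random element fixes every decoration point wise behaves like the probability that a random permutation in $S_r$ satisfies a parity constraint on each cycle, roughly $\prod(1-1/2k)$ over cycle lengths. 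This is $\asymp r^{-1/2}$. Balancing the contribution of components with few order-$2$ branch points (via counting components with $r\le n^{2/3}$) against $r^{-1/2}$ gives the $n^{-1/3}$ rate, and careful tracking of constants yields $4+\epsilon$.
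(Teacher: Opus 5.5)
Your outline follows the paper's route: Tate's $\mathrm{H}^3$ criterion (HNP holds iff some $D_v\supseteq V_4$, iff $e=f=2$ at a $(2,2)$-point), a decoration cover of the Hurwitz space, Chebotarev for $G=(\ZZ/2\ZZ)^{n_2}\rtimes S_{n_2}$, and a split at $n_2\approx n^{2/3}$. Your cycle-parity count is the right one. The relevant property is that Frobenius fixes \emph{some} decoration, i.e.\ is conjugate into $S_{n_2}$; it is not required to fix every decoration. This gives $b_{n_2}=\prod_{k\le n_2}(1-\tfrac1{2k})$, and you need the explicit bound $b_{n_2}<n_2^{-1/2}$, not just $\asymp$. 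However, the two steps that carry the real content are only announced, and your main target for one of them is false.

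\textbf{Monodromy of the decoration cover.} Full monodromy $(\ZZ/2\ZZ)^{r}\rtimes S_r$ does not hold. To see this, match the six decorated $(2,2)$-cycles with the six elements $\pm i,\pm j,\pm k\in Q_8\subset\SL_2(\FF_3)$; these are isomorphic $A_4$-sets over the three $(2,2)$-cycles. Match each $3$-cycle with its lift of order $3$. The product of the lifts of a decorated tuple is then $\pm1$, is invariant under Hurwitz moves and $A_4$-conjugation, and changes sign when a single decoration is flipped. On the strata with $a\neq b$ (almost all of them), odd conjugation swaps $V(a,b,n_2)$ and $V(b,a,n_2)$, so only even conjugations identify tuples. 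Hence over each component (with $n_2\ge1$), $\widetilde H^{\rm fail}$ has exactly two components. The geometric monodromy is the index-$2$ subgroup $\{(u,\sigma):\sum_i u_i=0\}$, and the proportion is $2b_{n_2}$, or alternates between $0$ and $2b_{n_2}$ if Frobenius swaps the two components; it is not $b_{n_2}$. Your fallback, ``enough sign independence'', is exactly what needs proof. The paper proves the right statement with braid moves: first make all $\tau_j$ equal, then run the $j$th point of $B_2$ around the first. This shows the preimage of a component has at most two components and that $S_{n_2}$ preserves them. So the stabiliser $G_0$ has index $r\le2$, is normal, and contains $\Phi$, giving proportion $rb_{n_2}\le 2b_{n_2}$. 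That factor $2$ is one of the two factors of $2$ in the constant $4$.

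\textbf{Counting components and fields of definition.} ``Covers with few $(2,2)$-points are a small fraction'' needs the number of geometric components in each stratum and their fields of definition, and you give no mechanism for either. The paper supplies both:
\begin{enumerate}
\item The Conway--Parker theorem with Wood's computation $U(A_4)\cong A_4\times_{C_3}\ZZ^3$ gives exactly one braid orbit on $V(a,b,n_2)$ when $a,b,n_2\ge M$ and $a\equiv b \pmod 3$, and none otherwise.
\item The Ellenberg--Venkatesh bound $\lambda$ covers the remaining strata.
\item A branch-cycle argument shows $H(\{a,b\},n_2)$ is defined over $\QQ$, hence geometrically irreducible over $\FF_p$ in the one-orbit case, so Lang--Weil bounds the denominator from below for every $m$.
\end{enumerate}
This gives about $n^2/12$ components in total versus $n^{\alpha+1}/6$ with $n_2<n^{\alpha}$, i.e.\ the term $2n^{\alpha-1}$. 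Together with $2n^{-\alpha/2}$ at $\alpha=2/3$, this is where $4n^{-1/3}$ comes from. Without such a count, nothing stops the small-$n_2$ strata from carrying far more components than the rest.

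Two smaller corrections. First, the decoration must be a torsor, namely a labelling of the two points of $C$ over each point of $B_2$, i.e.\ $\operatorname{Isom}_{B_2}(\{\pm1\}\times B_2,C_2)$. It cannot be the group $V_4/\langle\sigma\rangle$, whose identity coset is always Frobenius-fixed, which would make your rational-point criterion vacuous. Second, Tate's formula computes $\mathfrak{K}(L/k)$, so you still need $\mathfrak{K}(K/k)\cong\mathfrak{K}(L/k)$.
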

  \begin{remark}
    \begin{enumerate}
    \item 
    For any field \(k\), a finite extension of \(k(t)\) is equivalent to a finite cover \(f \colon C \to \PP^1_k\), where \(C\) is a smooth projective curve over \(k\).
      In our case, the degree of \(f\) is \(4\) and the characteristic of \(k\) is at least \(5\), so \(f\) is separable.
      By the \emph{discriminant} of the extension, we mean the branch divisor of \(f\).
      For \(A_4\)-quartics, the branch divisor is even.
      In \cite{ell.ven:05}, ``discriminant'' is used slightly differently: there, it means \(q\) raised to the degree of the ramification divisor. (Note that the degree of the ramification divisor is the same as the degree of the branch divisor.)
            
    \item
    In Theorem \ref{thm: main result} we require $n$ to be sufficiently large. This comes from
    Proposition~\ref{prop:cpa4} and Lemma~\ref{lem:estimates}; see Remark \ref{rem:suff large}.
    \end{enumerate}
  \end{remark}

We obtain the following immediate corollary.
\begin{corollary}\label{Cor: main result}
Let $q$ be a power of a prime $p\geq5$.
     Then we have
    \[ \lim_{n \to \infty} \limsup_{m \to \infty} \frac{\left|\left\{\parbox{.62\textwidth}{Isomorphism classes of \(A_4\)-quartic extensions of \(\mathbb{F}_{q^m}(t)\) with discriminant of degree \(2n\) that fail the Hasse norm principle}\right\}\right|}{\left|\left\{\parbox{.62\textwidth}{Isomorphism classes of \(A_4\)-quartic extensions of \(\mathbb{F}_{q^m}(t)\) with discriminant of degree \(2n\)}\right\}\right|}  = 0.\]
\end{corollary}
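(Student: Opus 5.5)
This follows at once from \Cref{thm: main result}. I would take the limit in \(n\) of the bound given there, with no new input beyond that theorem.

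Write \(r_{n,m}\) for the ratio in the corollary, and set \(R_n = \limsup_{m\to\infty} r_{n,m}\). Since \(r_{n,m}\) is the proportion of a subset, \(0 \le r_{n,m} \le 1\) whenever the denominator is nonzero, so \(0 \le R_n \le 1\). We only need to look at large \(n\), and \Cref{thm: main result} says that for such \(n\) the limsup over \(m\) is a genuine finite quantity controlled by the bound.

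Fix \(\epsilon>0\); for definiteness take \(\epsilon = 1\). By \Cref{thm: main result} there is an \(N\) with \(0 \le R_n \le (4+\epsilon)n^{-1/3} = 5n^{-1/3}\) for all \(n \ge N\). The right-hand side tends to \(0\) as \(n\to\infty\), so the squeeze theorem gives that \(\lim_{n\to\infty} R_n\) exists and equals \(0\). This is exactly the statement of \Cref{Cor: main result}.

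The only point needing care is the nonnegativity lower bound together with the existence of the limit (not just a limsup in \(n\)); both follow from the squeeze. One also needs the denominators to be nonzero for large \(m\), so that the ratio is defined. This is implicit in \Cref{thm: main result} and holds because \(A_4\)-quartic covers with even branch degree \(2n\) exist over \(\mathbb{F}_{q^m}\) once \(m\) is large. There is no real obstacle.
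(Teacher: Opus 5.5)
Your proposal is correct and matches the paper, which states this as an immediate corollary of \Cref{thm: main result}: the bound $0 \le \limsup_{m\to\infty} r_{n,m} \le (4+\epsilon)n^{-1/3}$ for all sufficiently large $n$ forces the limit in $n$ to be $0$ by squeezing. Your extra remarks on nonnegativity and on the denominators being nonzero are harmless added care.
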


Note that if one replaces the base field $\mathbb{F}_{q^m}(t)$ in Theorem \ref{thm: main result} and Corollary \ref{Cor: main result} with a number field $k$, then even the total number of $A_4$-quartic extensions of $k$ appearing in the denominator is not known: it is a famous open case of Malle's conjecture on the asymptotic distribution of $G$-extensions of bounded discriminant~\cite{mal:02, mal:04}. For $A_4$-quartic extensions of $\QQ$, the lower bound predicted by Malle has recently been proven by Loughran and Paterson~\cite{lou.pat:25} but the upper bound remains out of reach --- see~\cite{bstttz:20} for the best result to date.

An $A_4$-quartic extension of $\mathbb{F}_{q}(t)$ with discriminant of degree $2n$ corresponds to a degree $4$ cover of curves $f \colon C\to\PP^1_{\FF_q}$ with Galois group $A_4$ and branch divisor of degree $2n$.  Such covers are parametrised by a Hurwitz space which we denote by $H(n)$ (see~\Cref{sec:hurwitz} and~\ref{eq:disjoint} for its definition). Ellenberg and Venkatesh~\cite{ell.ven:05} were the first to use the topology of Hurwitz spaces over $\CC$ to control their point counts over finite fields and thus obtain heuristics for the number of finite (geometrically connected) $G$-extensions of $\FF_q(t)$, initiating a great deal of activity and progress in this area. T\"{u}rkelli~\cite{trk:15} extended the work of Ellenberg and Venkatesh to tackle covers of $\PP^1_{\FF_q}$ that are not necessarily geometrically connected, and showed that the resulting heuristic suggested a modification of Malle's conjecture that avoided the counterexamples due to Kl\"{u}ners~\cite{kl:05} that were known at the time. Interestingly, despite counterexamples to T\"{u}rkelli's modification of Malle's conjecture being found by Wang in the number field setting~\cite{wan:}, it has now been proven for finite $G$-extensions of $\FF_q(t)$, with $q$ sufficiently large and coprime to $|G|$. Landesman and Levy~\cite{lan.lev:25-1} proved that the order of magnitude of the counting function for such $G$-extensions is as predicted by Türkelli, improving on an upper bound given by Ellenberg, Tran and Westerland~\cite{ell.tra.wes:}. An asymptotic formula with an explicit description of the leading constant was given by Santens~\cite{san:}.

In order to study the statistics of the Hasse norm principle in families of $G$-extensions of $\FF_q(t)$, one needs to count extensions with local conditions imposed. Ramification conditions are readily captured by the Hurwitz space machinery. Imposing conditions on residue degrees is more challenging as this information is lost in the geometric setting when one base changes to $\overline{\FF}_q$. The first example of counting extensions of global function fields with a residue degree condition was in \cite{liu.woo.zur:24} where the authors required that their extensions be completely split at all places over infinity. 

In this paper, we translate the conditions for failure of the Hasse norm principle into prescriptions for the ramification indices and residue degrees of our $A_4$-quartics. We define a decorated Hurwitz space $H^{\rm fail}(n)$ admitting a finite map $\phi: H^{\rm fail}(n) \to H(n)$, and show that the Hasse norm principle fails for an $A_4$-quartic cover if and only if it lies in the image of $\phi$. Thus, we are led to consider the quotient 
\[\frac{\left|\phi(H^{\rm fail}(n)(\FF_q))\right|}{\left|H(n)(\FF_q)\right|}.\]
We have an obvious upper bound \[ \frac{\left|\phi(H^{\rm fail}(n)(\FF_q))\right|}{\left|H(n)(\FF_q)\right|} \leq \frac{\left|H^{\rm fail}(n)(\FF_q)\right|}{\left|H(n)(\FF_q)\right|}\]
and one can understand the right-hand side using existing technology for point counts on
Hurwitz spaces. But this upper bound turns out to be too crude for our purposes, giving a positive constant in place of the asymptotically zero result of~\Cref{Cor: main result}. The key point is that the size of $\phi(H^{\rm fail}(n)(\FF_q))$ is vastly smaller than the size of $H^{\rm fail}(n)(\FF_q)$; the
$\FF_q$-points of $H^{\rm fail}(n)$ tend to cluster together in fibres over the $\FF_q$-points of $H(n)$. The Chebotarev density theorem allows us to get a handle on this clustering and thus prove our main result, \Cref{thm: main result}. 

We expect the techniques introduced in this paper to work in any setting where one can translate the failure of the Hasse norm principle into conditions on ramification indices and residue degrees.  In these cases, one can parametrise the failures by a 
Hurwitz space analogous to our fail space $H^{\rm fail}(n)$. For example, this is the case for biquadratic extensions, since analogues of Lemmas \ref{lem:MN} and \ref{lem: splitting conditions} hold by a result of Tate, see~\cite[p.198]{cas.fro:67}. However, biquadratic extensions can also be tackled by more elementary methods. In \cite[Proposition 1.8]{mac.new:22}, local conditions for the validity of the Hasse norm principle are given for extensions with Galois group $S_4,S_5,A_4$ or $A_5$. 
Although \cite[Proposition 1.8]{mac.new:22} is stated for number fields, it also holds for extensions of global function fields, see the proof of Lemma \ref{lem:MN}. For example, for Galois group $A_4$, the only extensions for which the Hasse norm principle can fail are those of degrees $4, 6$ and $12$. We expect the methods showcased here for $A_4$-quartic extensions to work for $A_4$-extensions of degrees $6$ and $12$ as well.

\begin{outline}The paper is organised as follows. Section \ref{Sec: preliminaries} contains the necessary background on ramification theory and the Chebotarev density theorem for function fields. We determine conditions for the validity of the Hasse norm principle for an $A_4$-quartic extension of $\mathbb{F}_q(t)$ in terms of ramification indices and residue degrees (Lemma \ref{lem: splitting conditions}). We use this in Section~\ref{sec:hurwitz}, where we introduce a fail space $H^{\rm{fail}}(n_1,n_2)$ as a modification of the Hurwitz space $H(n_1,n_2)$. These spaces depend on parameters $n_1$ and $n_2$ corresponding to different types of ramification. We prove that the $A_4$-quartics for which the Hasse norm principle fails can be parametrised by the image of a forgetful map $H^{\rm{fail}}(n_1,n_2)(\mathbb{F}_q)\to H(n_1,n_2)(\mathbb{F}_q)$ (Theorem \ref{thm:failset}). In Section \ref{sec:morehurwitz}, we show how to estimate the ratio of the size of this image to the size of the codomain $H(n_1,n_2)(\mathbb{F}_q)$ using the Chebotarev density theorem. We analyse connected components of our various Hurwitz spaces in Section \ref{sec:comphurwitz}, allowing us to compute this ratio in the large power of $q$ limit (Theorem \ref{thm:failbound}). Finally, in Section~\ref{sec: n to infinity} we analyse the limit as $n=n_1+n_2$ goes to infinity, proving \Cref{thm: main result}. 
\end{outline}

\begin{notation} 
  For a field extension $K/k$ and a place $v'$ of $ K$ above a place $v$ of $k$, we denote by $e(v'/v)$ and $f(v'/v)$ the ramification index and inertia degree, respectively. When $K$ and $k$ are function fields, we write $e(p'/p)$ and $f(p'/p)$ for the ramification index and residue degree, respectively, of the closed point $p'$ corresponding to $v'$ lying over the closed point $p$ corresponding to $v$.

  By \(\Schemes\) we mean the category of locally Noetherian schemes over \(\ZZ[1/6]\).
A \emph{scheme} means an object in this category.
If \(S\) is a scheme, then an \(S\)-scheme is a scheme together with a morphism to \(S\).
If \(X\) is an \(S\)-scheme and \(T \to S\) is a morphism, we denote by \(X_T\) the \(T\)-scheme \(X \times_S T \to T\).

A \emph{degree \(d\) cover} means a finite flat morphism of degree \(d\).

\end{notation}

\subsection*{Acknowledgements}
We are grateful to Jordan Ellenberg, Aaron Landesman, Scott Mullane and Melanie Matchett Wood for helpful discussions and to Harmeet Singh for pointing out some useful references.
This research was supported by UKRI Future Leaders Fellowship MR/T041609/1, MR/T041609/2 and UKRI1060.
Additionally, the first author was supported by the Australian Research Council grant DE180101360 and the last author by ERC Horizon 2020 Consolidator Grant ID 101001051 and MSCA Postdoctoral Fellowship 101148712. 

\begin{tabular}{c p{10cm}}
\raisebox{-25pt}{ \includegraphics[scale=0.3]{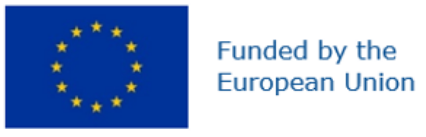}   }&   \tiny{ Funded by the European Union. Views and opinions expressed are however those of the author(s) only
and do not necessarily reflect those of the European Union or the European Research Executive Agency. Neither
the European Union nor the granting authority can be held responsible for them.}
\end{tabular}

\section{Preliminaries}\label{Sec: preliminaries}

This section gathers some necessary background and preliminary results that will eventually allow us to encode failures of the Hasse norm principle via Hurwitz spaces, and to measure the frequency of such failures.

\subsection{Local conditions}\label{sec: splitting primes}
In this section we classify the $A_4$-quartic extensions of $\mathbb{F}_q(t)$ for which the Hasse norm principle holds, in terms of the splitting of primes. We first set up notation and background results. 
\vspace{11pt}

Let $p$ be a prime, $q$ a power of $p$, and $K$ a finite extension of $k=\mathbb{F}_q(t)$ of degree coprime to $p$, with Galois closure $L$. Let $G$ be the Galois group Gal$(L/k)$. We assume $(p,|G|)=1$, and that $L/k$ does not contain any non-trivial constant sub-extensions. Let $H= \Gal(L/K)$.

For a place $v$ of $k$, and a place $w$ of $L$ above $v$, we denote by $k_v$ and $L_w$ the completions of $k$ and $L$ with respect to $v$ and $w$, respectively, and by $\mathbb{F}_v$ and $\mathbb{F}_w$ the corresponding residue fields. We write $D_{w/v}\subset G$ for the decomposition group of $w$ in $G$, that is, $D_{w/v}=\{g\in G\colon g(w)=w\}$. This is the Galois group of $L_w$ over $k_v$ \cite[Prop. II.9.6]{neu:99}.  The following diagram summarises what we have defined so far.

\begin{center}
\begin{tikzcd}
\tikzcdset{every label/.append style = {font = \normalsize}}
&\mbox{field extensions}&\mbox{places}&\mbox{completions}&\mbox{residue fields}\\
&L\arrow[d, dash, bend left=50,"H"]&w&L_w\arrow[d, dash, bend left=50,"D_{w/v}\cap H"]&\mathbb{F}_w\\
&K \arrow[u, dash]&v' \arrow[u, dash]&K_{v'} \arrow[u, dash]&\mathbb{F}_{v'}\arrow[u, dash]\\
&k=\mathbb{F}_q(t) \arrow[u, dash] \arrow[uu, dash, bend left=50,"G"] &v \arrow[u, dash]&k_v \arrow[u, dash] \arrow[uu,dash, bend left=50,"D_{w/v}"]&\mathbb{F}_v \arrow[u, dash]\\
&&&&\mathbb{F}_q \arrow[u, dash]
\end{tikzcd}
\end{center}

 We denote by $I_{w/v}$ the inertia group of $w$, that is, the subgroup of $D_{w/v}$ that acts trivially on the residue field $\mathbb{F}_w$. We have a short exact sequence \cite[Prop. II.9.9]{neu:99}: \begin{equation}\label{ses Dv Iv}
0\longrightarrow I_{w/v}\longrightarrow D_{w/v}\longrightarrow \mbox{Gal}(\mathbb{F}_w/\mathbb{F}_v)\longrightarrow 0.
\end{equation}

We write $D_v$ to denote a choice of $D_{w/v}$ for some place $w$ of $L$ above a place $v$ of $k$, and we use the notation $I_v$ similarly. The groups $D_v$ and $I_v$ are well defined up to conjugacy in $G$. 

\begin{lemma}\label{lem:MN}
Let $K/k$ be an $A_4$-quartic, 
i.e.\ a quartic extension whose normal closure $L/k$ has Galois group $A_4$. Then the Hasse norm principle holds for $K/k$ if and only if there exists a place $v$ of $k$ whose decomposition group $D_v\subset A_4$ contains a copy of $V_4$.     
\end{lemma}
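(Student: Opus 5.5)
We would deduce this from Tate's description of the knot group for Galois extensions, extended to non-Galois $K/k$ via the Drakokhrust--Platonov / first obstruction machinery, exactly as in \cite[Proposition 1.8]{mac.new:22}. The plan is to verify that every ingredient of that argument is available over global function fields, and then to carry out the group-theoretic computation for $G=A_4$, $H\cong C_3$.

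\textbf{Step 1 (transfer of the cohomological input).} The knot group satisfies $\mathfrak{K}(K/k)\cong \Sha^1(k,T)^\vee$, or equivalently $\Sha^2(k,\hat T)$, for the norm one torus $T=R^1_{K/k}\mathbb{G}_m$. This follows from Poitou--Tate duality for tori, which holds over global function fields. In our situation $(p,|G|)=1$, so the $p$-torsion subtleties vanish, and we may rely on Milne's \emph{Arithmetic duality theorems}. We then obtain Drakokhrust's formula
\[\mathfrak{K}(K/k)\cong \ker\Bigl(\hat H^{-3}(G,\ZZ)\to\textstyle\prod_v\ldots\Bigr)\]
together with its generalised-representation-group refinement. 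More concretely, for $A_4$ we use the first obstruction group
\[\mathfrak{F}(L/K/k)=\ker\bigl(H^{ab}\to G^{ab}\bigr)\big/\phi\bigl(\ker(\textstyle\prod_v\ldots)\bigr).\]
Since the Schur multiplier is $H^2(A_4,\QQ/\ZZ)=\ZZ/2$ and $H=C_3$ has order prime to $2$, the knot group coincides with the obstruction computed from the Schur multiplier. The argument is purely Galois-cohomological: it uses Chebotarev density, which we have for function fields, and the fact that every subgroup of $G$ arises as a decomposition group. Everything therefore transfers.

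\textbf{Step 2 (group computation).} Tate's formula for Galois $L/k$ gives
\[\mathfrak{K}(L/k)^\vee\cong \ker\Bigl(H^3(G,\ZZ)\to \textstyle\prod_v H^3(D_v,\ZZ)\Bigr),\]
with $H^3(A_4,\ZZ)\cong H^2(A_4,\QQ/\ZZ)\cong\ZZ/2$. Since $[K:k]=4$ is coprime to $3$ and the kernel involves only $2$-primary parts, restricting to a Sylow $2$-subgroup shows that $\mathfrak{K}(K/k)$ equals the $2$-part of the Galois computation, namely the $2$-part of the corresponding kernel for $L/k$. The Schur multiplier class restricts nontrivially to $V_4$, because $H^2(V_4,\QQ/\ZZ)=\ZZ/2$ and the restriction is an isomorphism: $V_4$ is the Sylow $2$-subgroup and $A_4\to$ its normaliser action is trivial on $\wedge^2$. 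It restricts trivially to every cyclic subgroup and to $C_3$. The possible decomposition groups are the subgroups of $A_4$, namely $1$, $C_2$, $C_3$, $V_4$ and $A_4$. Hence the kernel is trivial if and only if some $D_v$ contains $V_4$, which gives the statement.

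\textbf{Main obstacle.} The hard part is Step 1 for the non-Galois field $K$, i.e.\ justifying that Drakokhrust's theorem holds in characteristic $p$. We would handle this by citing function-field versions of the relevant duality, such as Milne, or Rapinchuk for Tate's theorem. Alternatively, we would use the corestriction argument directly: $[K:k]=4$ is prime to $3$, so the $3$-part of $\mathfrak{K}(K/k)$ is killed, and the $2$-part can be computed over the fixed field of a Sylow $2$-subgroup, using \cite[Proposition 1.8]{mac.new:22} whose proof is field-independent.
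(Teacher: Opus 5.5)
Your Step 2 is the paper's direct proof. It uses Tate's description $\mathfrak{K}(L/k)^\vee\cong\ker\bigl(H^3(A_4,\ZZ)\to\prod_v H^3(D_v,\ZZ)\bigr)$, the computation $H^3(A_4,\ZZ)\cong\ZZ/2\ZZ$, injectivity of restriction to the Sylow subgroup $V_4$, and the vanishing of $H^3$ of cyclic groups. Tate's theorem is valid for arbitrary global fields, which is how the paper uses it. So your Step 1 (Poitou--Tate for tori, Drakokhrust--Platonov) is unnecessary, and your ``main obstacle'' disappears.

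Step 1 also contains a false assertion that you should delete: it is not true that every subgroup of $G$ arises as a decomposition group. Chebotarev only produces cyclic subgroups, at unramified places. Non-cyclic $D_v$ can occur only at ramified places, where in the tame case $D_v$ is metacyclic, so $A_4$ never occurs. Whether $V_4$ occurs depends on the extension, and that is precisely the criterion of the lemma. If every subgroup occurred, the lemma would say the Hasse norm principle always holds. Fortunately Step 2 only uses that each $D_v$ is some subgroup of $A_4$, so nothing is lost.

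The step that is genuinely under-justified is the identification $\mathfrak{K}(K/k)\cong\mathfrak{K}(L/k)$. Your observation that $[K:k]=4$ is prime to $3$ is the right input. However, restriction to a Sylow $2$-subgroup is a statement about $H^3(A_4,\ZZ)$ and does not by itself compare the knot groups of $K$ and $L$. The argument the paper imports from \cite[Proposition~3.9]{mac.new:22} is elementary:
\begin{itemize}
\item The identity of $k^\times$ induces $\iota\colon\mathfrak{K}(L/k)\to\mathfrak{K}(K/k)$, since $N_{L/k}=N_{K/k}\circ N_{L/K}$.
\item The map $x\mapsto x^{3}$ induces $\rho\colon\mathfrak{K}(K/k)\to\mathfrak{K}(L/k)$, since $N_{L/k}(z)=N_{K/k}(z)^{[L:K]}$ for $z\in\mathbb{A}_K^\times$.
\item Both $\rho\circ\iota$ and $\iota\circ\rho$ are cubing.
\item $\mathfrak{K}(K/k)$ is killed by $4$, because $x^4=N_{K/k}(x)$ for $x\in k^\times$.
\item $\mathfrak{K}(L/k)$ is killed by $2$, by the Tate computation.
\end{itemize}
Hence cubing is an automorphism of both groups, and $\iota$ and $\rho$ are isomorphisms. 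With this argument in place of your Sylow sentence, the proof is complete.
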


\begin{proof}
In the number field setting, this is a result of Kunyavski\u{\i}, see~\cite{kun:84}, or a special case of~\cite[Proposition~1.8]{mac.new:22}, which is a consequence of~\cite[Theorems~1.1 and 1.3]{mac.new:22}. In fact, these results also hold true in the setting of extensions of global function fields. Many of the results used in their proofs are purely group theoretic or class field theoretic. A few of them assume characteristic zero in order to make use of resolution of singularities, for example the work of Voskresenski\u{\i} in~\cite{vos:70}. However, the required results have in fact been proven in the setting of global fields of arbitrary characteristic, see e.g.~\cite[Corollaire~1]{cths:05}. See~\cite{singh} for a useful collection of extensions to the setting of arbitrary global fields of many results in this area that had previously appeared in the context of number fields.
More precisely, for the extension to this more general setting of~\cite[Theorem~2.1]{mac.new:22}, see~\cite[Section~11.6]{vos:98}; for~\cite[Theorem~2.2]{mac.new:22}, see~\cite[Theorem~5.4]{ct:07}; and for the result on Picard groups used in the proof of~\cite[Theorem~3.3]{mac.new:22}, see~\cite[Proposition~3.1]{ct:07}. 

For the reader's convenience, we also give a direct proof of the result we need here. Tate (\cite[p.\ 198]{cas.fro:67}) has shown that \(\mathfrak{K}(L/k)\) is dual to $\ker(\mathrm{H}^3(G,\ZZ)\to \prod_{v}\mathrm{H}^3(D_v,\ZZ))$
where $G=\Gal(L/k)\cong A_4$ and $v$ runs over all places of $k$. One computes $\mathrm{H}^3(A_4,\ZZ)=\ZZ/2\ZZ$ and uses the fact that $V_4$ is the Sylow $2$-subgroup of $A_4$ along with~\cite[Theorem III.10.3]{bro:82} to deduce that \(\mathfrak{K}(L/k)\hookrightarrow \ZZ/2\ZZ\) is trivial if and only if there exists a place $v$ of $k$ with $V_4\hookrightarrow D_v$. Now observe that $\mathfrak{K}(K/k)$ is killed by $[K:k]=4$ and apply~\cite[Proposition~3.9]{mac.new:22} to obtain an isomorphism $\mathfrak{K}(K/k)\cong\mathfrak{K}(L/k)$.
\end{proof}

\begin{lemma}\label{lem: splitting conditions}
Let $K/k$ be an $A_4$-quartic. Then there is a place $v$ of $k$ with $V_4\hookrightarrow D_v$ if and only if there is a place $v'$ of $K$ above $v$ with $e(v'/v)=f(v'/v)=2$. Moreover, if this is the case
then $D_v\cong V_4$. 
\end{lemma}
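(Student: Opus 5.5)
The plan is to use the structure of decomposition groups at tamely ramified places together with the subgroup structure of $A_4$. Since $p \geq 5$ does not divide $|A_4|=12$, every place is tamely ramified. Hence $I_v$ is cyclic and $D_v/I_v$ is cyclic, generated by Frobenius, by \eqref{ses Dv Iv}. The subgroups of $A_4$ are $1$, $C_2$ (three conjugates), $C_3$ (four conjugates), $V_4$ and $A_4$. The group $A_4$ is not metacyclic in the required sense: $I_v$ would have to be a cyclic normal subgroup with cyclic quotient, but the only nontrivial proper normal subgroup of $A_4$ is $V_4$, which is not cyclic. So $D_v\neq A_4$. Consequently $D_v$ contains a copy of $V_4$ if and only if $D_v = V_4$, which already gives the ``moreover'' clause once the equivalence is established.

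Next I would translate $D_v=V_4$ into splitting data for $K$. I identify $K$ with the fixed field of $H=\mathrm{Stab}(1)\cong C_3$ in the natural action of $A_4$ on $\{1,2,3,4\}$. The places $v'$ of $K$ above $v$ correspond to $D_v$-orbits on $\{1,2,3,4\}$, i.e.\ to double cosets $H\backslash G/D_v$. For the place corresponding to an orbit $O$, the ramification index $e(v'/v)$ is the common length of the $I_v$-orbits on $O$, and $f(v'/v)$ is the number of $I_v$-orbits in $O$ (the standard double coset description of $e$ and $f$).

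For the forward direction, suppose $D_v=V_4$. Then $V_4$ acts transitively on the four letters, so there is a single place $v'$ above $v$, with $e f=4$. Because $I_v$ is a cyclic subgroup of $V_4$ with $D_v/I_v$ cyclic, $I_v\cong C_2$; for instance $I_v=\langle(12)(34)\rangle$. Its orbits then have length $2$ and there are $2$ of them in the single $D_v$-orbit, so $e=f=2$.

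For the converse, suppose some $v'$ has $e(v'/v)=f(v'/v)=2$. Then the corresponding $D_v$-orbit $O$ has size $ef=4$, so $D_v$ is transitive on four letters. Among the possible decomposition groups $1, C_2, C_3, V_4$, only $V_4$ is transitive, so $D_v=V_4$. (Equivalently, $|D_v|$ is divisible by $4$.)

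The main obstacle is justifying the orbit description of $e$ and $f$ cleanly, in particular checking that the lengths of the $I_v$-orbits within a single $D_v$-orbit are constant and equal to $e$. This holds because $I_v$ is normal in $D_v$, so $D_v$ permutes the $I_v$-orbits transitively. I would cite the standard result (e.g.\ Neukirch) rather than reprove it.
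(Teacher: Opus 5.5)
Your proposal is correct and follows essentially the same route as the paper: tameness makes $I_v$ and $D_v/I_v$ cyclic, which rules out $D_v=A_4$ and forces $D_v\cong V_4$ with $I_v\cong C_2$, and the converse comes from $4$ dividing $|D_v|$ (equivalently, $D_v$ being transitive on the four letters). The only difference is in the forward direction. You read off $e(v'/v)=f(v'/v)=2$ from the $I_v$-orbits on $\{1,2,3,4\}$, whereas the paper observes that $D_v\cap H=\{1\}$ for $H=\Gal(L/K)\cong C_3$. Hence $K_{v'}=L_w$, and $e(v'/v)=e(w/v)=2$ and $f(v'/v)=f(w/v)=2$ follow directly, which sidesteps the double-coset description you flagged as the main obstacle.
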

\begin{proof}
Let $v$ be a place of $k$ and $w$ a place of $L$ lying above $v$. Since $D_v$ is the Galois group of the extension of local fields $L_w/k_v$, we have a filtration of $D_v$ formed by the ramification groups 
$$D_v\rhd G_0\rhd G_1\rhd G_2\rhd\ldots,$$
where $G_0=I_v$, the quotient $I_v/G_1\hookrightarrow \mathbb{F}_{w}^\times$ is cyclic of order coprime to $p$, and $G_i/G_{i+1}\hookrightarrow (\mathbb{F}_{w},+)$ 
is a $p$-group for $i\geq1$ \cite[Prop. II.10.2]{neu:99}. Therefore, the group $G_1$ is a $p$-group, and since $(p,|G|)=1$ by assumption, it follows that $G_1$ is trivial, hence $I_{v}$ is cyclic. Since $D_{v}/I_{v}$ is the Galois group of the field extension $\mathbb{F}_{w}/\mathbb{F}_{v}$  
by (\ref{ses Dv Iv}), it is also cyclic.
As before, we denote by $H$ the Galois group $\Gal(L/K)$; it is a subgroup of $A_4$ of order $3$.

Now assume that $V_4\hookrightarrow D_v$. Since $D_v$ is a subgroup of $G\cong A_4$, we deduce that $D_v$ is isomorphic to $V_4$ or $A_4$. 
As both $I_v$ and $D_v/I_v$ are cyclic, we find $D_v\cong V_4$, and $I_v\cong C_2$. Therefore, $e(w/v)=f(w/v)=2$ and $D_v\cap H=\{1\}$. 
Now let $v'$ be a place of $K$ below $w$, and note that $K_{v'}\subset L_w$ is the fixed field of the intersection $D_v\cap H\subset A_4$. Since $D_v=V_4$, it follows that $K_{v'}$ is the fixed field of the trivial group, hence $K_{v'}=L_w$. We conclude that $e(v'/v)=f(v'/v)=2$, as required. 

Conversely, let $v'$ be a place of $K$ above $v$ and assume that $e(v'/v)=f(v'/v)=2$. Then $4$ divides $|D_v|$ and by considering the subgroups of $A_4$ we conclude that $V_4\hookrightarrow D_v$.
\end{proof}

\begin{remark}\label{rem:ef}
  It follows from the proof of Lemma~\ref{lem: splitting conditions} that the inertia group at a ramified place is a cyclic subgroup of $A_4$, hence it is either generated by a $3$-cycle or by a product of two disjoint 2-cycles. We refer to the former as type $(3,1)$ and the latter as type $(2,2)$.  In the notation of \cite[\S~2]{bha:07}, the former has splitting symbol \((1^31^{1})\) and the latter has splitting symbol \((1^21^2)\) if \(f(v'/v) = 1\) and \((2^2)\) if \(f(v'/v)=2\). 
\end{remark}

\subsection{The Chebotarev density theorem}\label{sec:chebotarev}

We recall the Chebotarev density theorem for function fields.
Let \(G\) be a finite group.
Let \(X\) and \(Y\) be \(\FF_q\)-schemes and \(\phi \colon X \to Y\) a \(G\)-torsor defined over $\FF_q$. 
Fix an algebraic closure \(\FF_q \subset \overline \FF_q\) and let \(\Frob_q \in \Gal(\overline \FF_q/\FF_q)\) be the Frobenius.
Given \(y \in Y(\FF_q)\), consider the \(\FF_q\)-scheme \(X_y\).
The map \(\phi \colon X_y \to y = \Spec \FF_q\) is a \(G\)-torsor.
Therefore, given \(\overline x \in X_y(\overline \FF_q)\), there is a unique \(g \in G\) such that
\[ \Frob_q (\overline x) = g \cdot \overline x.\]
A different choice of \(\overline x \in X_y(\overline \FF_q)\) changes \(g\) by a conjugate in \(G\).
So the conjugacy class of \(g\) is a well-defined invariant associated to \(\phi\) and \(y\), which we denote by \(\varphi(y)\).
\begin{theorem}[Chebotarev density theorem]
  \label{thm:chebotarev}
  Let \(G\) be a finite group, \(\phi \colon X \to Y\) a \(G\)-torsor defined over \(\FF_q\), and \(\Phi \subset G\) a conjugacy class.
  \begin{enumerate}
      \item  Suppose that \(X\) is geometrically connected.
  Then
  \[ \lim_{m \to \infty} \frac{|\{y \in Y(\FF_{q^m}) \mid \varphi(y) = \Phi\}|}{|Y(\FF_{q^m})|} = \frac{|\Phi|}{|G|}. \]
  \item More generally, suppose that \(Y\) is geometrically connected and \(X\) has \(r\) geometric components, all defined over \(\FF_q\).
  Then
  \[ \lim_{m \to \infty} \frac{|\{y \in Y(\FF_{q^{m}}) \mid \varphi(y) = \Phi\}|}{|Y(\FF_{q^{m}})|} \leq r\frac{|\Phi|}{|G|}, \]
  with equality if and only if $\Phi$ is contained in the stabiliser of a geometric component of $X$.
  \end{enumerate}
\end{theorem}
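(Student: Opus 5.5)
The plan is to deduce the statement from the Lang--Weil estimates applied to quotients of \(X\) by subgroups, in the standard way. Part (1) reduces to part (2) with \(r=1\), since \(X\) geometrically connected forces \(Y\) geometrically connected, and \(G\) is then the stabiliser of the unique component. So I concentrate on (2).

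The first step is to understand the Frobenius classes over one geometric component. Let \(X_1,\dots,X_r\) be the geometric components of \(X\), each defined over \(\FF_q\). Because \(Y\) is geometrically connected, \(G\) permutes the \(X_i\) transitively. Let \(S\) be the stabiliser of \(X_1\), so \(|S| = |G|/r\) and \(X_1 \to Y\) is an \(S\)-torsor with \(X_1\) geometrically connected. For \(y \in Y(\FF_{q^m})\), take \(\overline x\) in the fibre inside \(X_1\). Since \(X_1\) is defined over \(\FF_q\), \(\Frob\) preserves \(X_1\), so the element \(g\) with \(\Frob(\overline x) = g\overline x\) lies in \(S\). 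Hence \(\varphi(y)\) always meets \(S\), and it is the \(G\)-class of the \(S\)-Frobenius class \(\varphi_S(y)\) of \(y\) for the \(S\)-torsor \(X_1\to Y\).

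The second step is the geometrically connected case, counted by twisting. For \(g\in S\), twist \(X_1\) by \(g\) to get an \(\FF_{q^m}\)-form. Its \(\FF_{q^m}\)-points are the \(\overline x\) with \(\Frob(\overline x)=g\overline x\), and each \(y\) with \(\varphi_S(y)\ni g\) contributes \(|C_S(g)|\) of them. The twist is geometrically irreducible of dimension \(\dim Y\), so Lang--Weil gives \(q^{m\dim Y}(1+o(1))\) points. Since \(Y\) is geometrically irreducible (being smooth or normal as a torsor base; otherwise one passes to the top-dimensional part), the same holds for \(|Y(\FF_{q^m})|\). Therefore
\[
\frac{|\{y : \varphi_S(y)=\Psi\}|}{|Y(\FF_{q^m})|} \to \frac{|\Psi|}{|S|}
\]
for each \(S\)-class \(\Psi\). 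With \(S=G\) this is exactly part (1).

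The final step is to sum over \(S\)-classes. The set of \(y\) with \(\varphi(y)=\Phi\) is the disjoint union, over the \(S\)-classes \(\Psi\subset \Phi\cap S\), of the sets \(\{\varphi_S(y)=\Psi\}\). The limit of the ratio is therefore \(|\Phi\cap S|/|S| = r|\Phi\cap S|/|G|\), which is at most \(r|\Phi|/|G|\). Equality holds if and only if \(\Phi\subset S\). Since all stabilisers of components are conjugate to \(S\) and \(\Phi\) is conjugation-stable, this is the same as \(\Phi\) being contained in the stabiliser of some geometric component.

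The main obstacle is the uniform Lang--Weil input for the twisted forms. It needs the error term to depend only on the geometry of \(X_1\), which is fixed, and it needs the twists to be geometrically irreducible. Both hold because each twist becomes isomorphic to \(X_1\) over \(\overline\FF_q\).
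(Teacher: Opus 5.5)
Your proposal is correct and follows the paper's argument. For (2), the paper likewise passes to a geometric component with stabiliser \(S\) of index \(r\), observes that the \(S\)-Frobenius class of \(y\) lies inside \(\varphi(y)\), splits \(\Phi\cap S\) into \(S\)-classes and obtains \(|\Phi\cap S|/|S|\le r|\Phi|/|G|\) with equality iff \(\Phi\subset S\); for (1) it only cites Lang and a short Lang--Weil proof, which is essentially the twisted-forms point count you write out.

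One side remark should be corrected: a torsor base need not be smooth or normal, and passing to top-dimensional parts does not rescue a geometrically connected but reducible \(X\), where the densities can genuinely differ from \(|\Phi|/|G|\). What your twisting step actually needs is that \(X_1\) is geometrically irreducible. This is automatic in the paper's application because the Hurwitz spaces are smooth, and the paper's citation tacitly assumes it too.
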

\begin{proof}
  The first assertion is well known; see, for example, \cite[Th\'eor\`eme 1]{lan:56}.
  It has a short proof using the Lang--Weil bounds; see \cite{mea:18}.
  
  For the second assertion, let \(X_0 \subset X\) be a geometrically connected component.
  Let \(H \subset G\) be the stabiliser of \(X_0\).
  Then \(X_0\) is geometrically connected, \(X_0 \to Y\) is an \(H\)-torsor, and the index of \(H\) in \(G\) is \(r\).

  Associated to a \(y \in Y(\FF_{q^m})\), we now have two conjugacy classes.
  The first is a conjugacy class of \(G\) from the \(G\)-torsor \(X \to Y\).
  The second is a conjugacy class of \(H\) from the \(H\)-torsor \(X_0 \to Y\).
  Denote the first by \(\varphi^G(y)\) and the second by \(\varphi^H(y)\).
  We claim that \(\varphi^H(y) \subset \varphi^G(y)\).
  To see this, recall that the definition of \(\varphi^G(y)\) involves a choice of an arbitrary \(\overline x \in X(\overline \FF_q)\) over \(y \in Y(\FF_{q^m})\).
  If we take \(\overline x\) to be in \(X_0\), then the unique \(g \in G\) such that \(\Frob_{q^m}(\overline x) = g \cdot \overline x\) lies in \(H \subset G\).
  By definition, \(\varphi^G(y)\) is the conjugacy class of \(G\) represented by \(g\) and \(\varphi^H(y)\) is the conjugacy class of \(H\) represented by \(g\).
  So \(\varphi^H(y) \subset \varphi^G(y)\).

  The subset \(\Phi \cap H \subset H\) is preserved by conjugation by elements of \(H\).
  Write it as a (possibly empty) union of distinct conjugacy classes of \(H\), say
  \[\Phi \cap H = \Phi_1 \sqcup \dots \sqcup \Phi_n.\]
  Since \(\varphi^H(y) \subset \varphi^G(y)\), we see that \(\varphi^G(y) = \Phi\) if and only if \(\varphi^H(y) = \Phi_i\) for some \(i\).
  By the Chebotarev density theorem applied to the \(H\)-torsor \(X_0 \to Y\), we have
  \[ \lim_{m \to \infty} \frac{|\{y \in Y(\FF_{q^{m}}) \mid \varphi^H(y) = \Phi_i\}|}{|Y(\FF_{q^{m}})|} = \frac{|\Phi_i|}{|H|}.\]
  Since \(\varphi^G(y) = \Phi\) is equivalent to \(\varphi^H(y) = \Phi_i\) for some \(i\), we get
  \[ \lim_{m \to \infty} \frac{|\{y \in Y(\FF_{q^{m}}) \mid \varphi^G(y) = \Phi\}|}{|Y(\FF_{q^{m}})|} = \frac{|\Phi_1|+\dots+|\Phi_{n}|}{|H|} = \frac{|\Phi\cap H|}{|H|}.\]
  Recalling that the index of \(H\) in \(G\) is \(r\), we have
  \[\frac{|\Phi \cap H|}{|H|} \leq \frac{|\Phi|}{|H|} = r \frac{|\Phi|}{|G|},\]
  with equality if and only if $\Phi\subset H$.
  The proof is now complete.
\end{proof}

Let \(X\) and \(Y\) be \(\FF_q\)-schemes, \(\phi \colon X \to Y\) a \(G\)-torsor defined over $\FF_q$, and \(\Lambda \subset G\) a subgroup. Let \(Z\) be the quotient of \(X\) by \(\Lambda\).
This quotient exists as a scheme by \cite[Expose~V, Proposition~3.1]{03}.
Since \(X \to Y\) is finite and \'etale, so is the induced map \(\phi \colon Z \to Y\).   
\begin{proposition}\label{prop:chebapply}
  In the set-up above, let \(\Phi \subset G\) be the union \(\Phi = \bigcup_{h \in G} h^{-1}\Lambda h\).
  Then
  \[ \phi(Z(\FF_q)) = \{y \in Y(\FF_q) \mid \varphi_y \subset \Phi\}.\]
  Furthermore, if \(Y\) is geometrically connected and \(X\) has \(r\) geometric components, all defined over \(\FF_q\), then
  \[ \lim_{m \to \infty} \frac{|\phi(Z(\FF_{q^{m}}))|}{|Y(\FF_{q^{m}})|} \leq r \frac{|\Phi|}{|G|},\]
     with equality if and only if $\Phi$ is contained in the stabiliser of a geometric component of $X$.
\end{proposition}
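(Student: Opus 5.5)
The argument has two parts: an identification of the image \(\phi(Z(\FF_q))\) fibre by fibre, and then a reduction of the density statement to \Cref{thm:chebotarev}(2) applied with the conjugacy-stable set \(\Phi\).

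\emph{The image.} Fix \(y \in Y(\FF_q)\) and work in the fibre. Since \(X_y \to \Spec \FF_q\) is a \(G\)-torsor, choosing \(\overline x \in X_y(\overline\FF_q)\) identifies \(X_y(\overline\FF_q)\) with \(G\) via \(h \mapsto h\cdot\overline x\). The Frobenius then acts by \(h\cdot \overline x \mapsto h g\cdot \overline x\), where \(g \in \varphi(y)\) is the element with \(\Frob_q(\overline x)=g\overline x\). This works because the \(G\)-action is defined over \(\FF_q\) and therefore commutes with Frobenius.

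Taking the quotient by \(\Lambda\) commutes with base change to the point \(y\), since \(Z \to Y\) is étale and the quotient of a torsor is formed fibrewise. Hence \(Z_y(\overline\FF_q) = \Lambda\backslash G\), with Frobenius acting by \(\Lambda h \mapsto \Lambda h g\). An \(\FF_q\)-point of \(Z\) over \(y\) is the same as a Frobenius-fixed point of \(Z_y(\overline\FF_q)\). The condition \(\Lambda h g = \Lambda h\) is equivalent to \(hgh^{-1} \in \Lambda\), that is, \(g \in h^{-1}\Lambda h\). So \(y\) lies in \(\phi(Z(\FF_q))\) if and only if \(g \in \Phi\). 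Because \(\Phi\) is a union of conjugacy classes, this is equivalent to \(\varphi_y \subset \Phi\), which proves the first claim.

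\emph{The density.} Write \(\Phi = \Phi_1 \sqcup \dots \sqcup \Phi_s\) as a disjoint union of \(G\)-conjugacy classes. Applying the first part over \(\FF_{q^m}\) gives
\[ |\phi(Z(\FF_{q^m}))| = \sum_{i} |\{y \in Y(\FF_{q^m}) : \varphi(y)=\Phi_i\}|. \]
The hypotheses are preserved under this base change: over \(\FF_{q^m}\), \(Y\) is still geometrically connected and the \(r\) geometric components of \(X\) are still defined over the base. So \Cref{thm:chebotarev}(2) applies to each summand. Taking \(\limsup\), or the limit, which in fact exists by the proof of that theorem (it equals \(|\Phi_i\cap H|/|H|\)), and summing over \(i\) gives the bound
\[ r\sum_i |\Phi_i|/|G| = r|\Phi|/|G|. \]

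For the equality case, it is cleanest to reuse the explicit limit \(|\Phi_i\cap H|/|H|\) from that proof. Summing gives the limit \(|\Phi\cap H|/|H|\), which equals \(r|\Phi|/|G|\) if and only if \(\Phi\subset H\). Here \(H\) is the stabiliser of a geometric component. All components are conjugate under \(G\) and \(\Phi\) is conjugation-stable, so it does not matter which component is chosen.

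The main obstacle is the careful bookkeeping in the fibre computation. One has to match the left/right conventions for the torsor action with the quotient \(\Lambda\backslash X\), so as to confirm that the condition comes out as "\(g\) conjugate into \(\Lambda\)" rather than some twisted variant. One also has to check that \(Z_y = X_y/\Lambda\), which holds because forming quotients by finite groups commutes with flat base change for étale torsors.
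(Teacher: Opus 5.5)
Your proof is correct and follows the paper's argument. Both use the same fibrewise identification $Z_y(\overline{\FF}_q)=\Lambda\backslash G$, with Frobenius acting by right multiplication by $g$, and then appeal to \Cref{thm:chebotarev}(2); your decomposition of $\Phi$ into conjugacy classes, together with the explicit limit $|\Phi\cap H|/|H|$ for the equality case, spells out what the paper compresses into ``the second follows from \Cref{thm:chebotarev}''.
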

\begin{proof}
  Let \(y \in Y(\FF_q)\).
  Let \(X_y(\overline \FF_q)\) be the set of geometric points of \(X\) over \(y\).
  Choose \(\overline x \in X_y(\overline \FF_q)\).
  Since \(X \to Y\) is a \(G\)-torsor, we have
  \[ X_y(\overline \FF_q) = \{g\cdot  \overline x \mid g \in G\}.\]
  Since \(Z = X/\Lambda\), the elements of \(Z_y(\overline \FF_q)\)
  correspond to right \(\Lambda\)-cosets
  \[ Z_y(\overline \FF_q) = \{\Lambda h\cdot \overline x \mid h \in G\}.\]

  Let \(g \in G\) be such that \( \Frob_q (\overline x) = g \cdot \overline x\).
  By definition, \(\varphi(y)\) is the conjugacy class of \(g\).
  The action of \(\Frob_q\) on \(Z_y(\overline \FF_q)\) sends the point corresponding to the coset \(\Lambda h \cdot \overline x\) to the point corresponding to \(\Lambda h \cdot g \overline x\).
  This point is fixed by \(\Frob_q\) if and only if \(\Lambda h = \Lambda hg\), which in turn is equivalent to \(g \in h^{-1}\Lambda h\).
  We conclude that \(y \in Y(\FF_q)\) lies in the image of \(Z(\FF_q)\) if and only if there exists an \(h \in G\)  and \(g \in h^{-1}\Lambda h\) such that \(\varphi_y\) is the conjugacy class of \(g\).
  This statement is clearly equivalent to \(\varphi_y \subset \Phi\).
  We have thus proved the first assertion.
  Given the first assertion, the second follows from \Cref{thm:chebotarev}. 
\end{proof}

\section{Hurwitz spaces and the Hasse norm principle}\label{sec:hurwitz}
In this section, we define a Hurwitz space of covers of the projective line, and a particular covering of this space that accounts for the failure of the Hasse norm principle.

Fix 
\(n_1\in\mathbb{Z}_{>0}\) and 
\(n_2\in\mathbb{Z}_{\geq 0}\).
For
\(d\in\mathbb{Z}_{\geq 0}\), we interpret \(\PP^d\) as the moduli space of effective divisors of degree \(d\) on \(\PP^1\).
Let \(U(n_1,n_2) \subset \PP^{n_1} \times \PP^{n_2}\) be the open subscheme whose \(S\)-points for a scheme $S$ are \((B_1,B_2)\), where
\begin{enumerate}
\item \(B_i \subset \PP^{1}_S\) is finite and \'etale over \(S\) of degree \(n_i\), and 
 \item \(B_1\) and \(B_2\) are disjoint. 
 \end{enumerate}
 Let \(\cU(n_1,n_2)\) be the functor represented by \(U(n_1,n_2)\).
 
An \emph{\(A_4\)-quartic cover of $\PP^1$} over a scheme \(S\) consists of \[(C \xrightarrow{\pi} S, B_1, B_2, C \xrightarrow{f} \PP^1_S),\] 
where
\begin{enumerate}
\item \(\pi\) is a proper smooth morphism whose geometric fibres are connected and of dimension 1,
\item \(B_1, B_2 \subset \PP^1_S\) are disjoint subschemes, finite and \'etale over \(S\) of degrees \(n_1\) and \(n_2\), respectively,
\item \(f\) is a finite flat morphism of degree \(4\) that is \'etale over the complement of \(B_1 \cup B_2\);
\item for every geometric point \(s \to S\), the degree 4 map \(f_s \colon C_s \to \PP^1_s\) has ramification type \((3,1)\) over every point of \(B_{1,s}\) and type \((2,2)\) over every point of \(B_{2,s}\).
  In the notation of \cite[\S~2]{bha:07}, the splitting symbol is \((1^31^1)\) over every (geometric) point of \(B_{1,s}\) and \((1^21^2)\) over every (geometric) point of \(B_{2,s}\).
\end{enumerate}
We remind the reader that our schemes are over \(\ZZ[1/6]\), and therefore a cover of degree \(4\) is automatically tame.
\begin{remark}\label{rem:A4}
  Connected degree 4 covers with ramification of type \((3,1)\) over at least one point and of type \((2,2)\) over zero or more points are precisely the covers with Galois group \(A_4\).
  Indeed, let \(k\) be an algebraically closed field of characteristic greater than 3, and let \[(C\to \Spec{k}, B_1,B_2,  C \to \PP^1_k)\] 
  be an \(A_4\)-quartic cover of $\PP^1_k$.
  Then the Galois group of \(k(C) / k(\PP^1)\) is a transitive subgroup of the symmetric group \(S_4\) generated by at least one 3-cycle (since \(n_1 = \deg B_1 > 0\)) and zero or more \((2,2)\)-cycles.
  The only such subgroup is the alternating group \(A_4\).
  Conversely, any degree 4 cover \(f \colon C \to \PP^1_k\) with Galois group \(A_4\) must have ramification types \((3,1)\) or \((2,2)\), and \((3,1)\) must appear at least once.
  \end{remark}

Let \((C \to S, B_1, B_2, C \xrightarrow{f} \PP^1_S)\) and \((C' \to S, B_1', B_2', C' \xrightarrow{f'} \PP^1_S)\) be two \(A_4\)-quartic covers of \(\PP^1_S\).
We say that they are \emph{isomorphic} if for \(i = 1,2\) we have \(B_i = B_i'\) and there exists an \(S\)-isomorphism \(\psi \colon C \to C'\) such that the following diagram commutes:
\[
  \begin{tikzcd}
    C \ar{r}{\psi}\ar{d}{f}& C' \ar{d}{f'}\\
    \PP^1_S \ar[equal]{r}& \PP^1_{S}.
  \end{tikzcd}
\]

Let \(\cH(n_1,n_2)\) be the contravariant functor from \(\Schemes\) to \(\Sets\) that sends \(S\) to the set of isomorphism classes of \(A_4\)-quartic covers of \(\PP^1_S\). 
\begin{remark}
  A more ``correct'' formulation of the functor of \(A_4\)-quartic covers would have been as a functor valued in groupoids.
  However, it is easy to check that \(A_4\)-quartic covers of \(\PP^1\) have trivial automorphism groups.
  So the  groupoid-valued functor is equivalent to a set-valued functor.
\end{remark}

\begin{theorem}\label{thm:HoverU}
  The functor \(\cH(n_1,n_2)\) is represented by a smooth quasi-projective \(\ZZ[1/6]\)-scheme \(H(n_1,n_2)\) of relative dimension $n_1+n_2$. The forgetful natural transformation \(\cH(n_1,n_2) \to \cU(n_1,n_2)\) is represented by a finite \'etale morphism
  \[ H(n_1,n_2) \to U(n_1,n_2).\]
\end{theorem}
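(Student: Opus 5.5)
We will show that the forgetful transformation \(\cH(n_1,n_2) \to \cU(n_1,n_2)\) is relatively representable by a finite étale morphism. Since \(U(n_1,n_2)\) is smooth and quasi-projective over \(\ZZ[1/6]\) of relative dimension \(n_1+n_2\) (it is open in \(\PP^{n_1}\times\PP^{n_2}\)), this gives everything at once: \(H(n_1,n_2)\) is then a scheme, finite étale over a smooth quasi-projective scheme, hence itself smooth and quasi-projective of the same relative dimension.

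The key input is tame étale descent for covers of the punctured line. Let \(\mathcal{B} = B_1 \cup B_2 \subset \PP^1_U\) be the universal branch divisor over \(U = U(n_1,n_2)\). It is finite étale over \(U\), so \(\PP^1_U \smallsetminus \mathcal{B} \to U\) is a smooth relative curve whose complement is a relative normal crossings divisor. By Grothendieck's theory of the tame fundamental group (SGA 1, Exposé XIII, or the Grothendieck--Murre theory of tamely ramified covers), the following two functors are equivalent over any \(U\)-scheme \(S\):
\begin{enumerate}
\item degree \(4\) covers \(C \to \PP^1_S\) with \(C\to S\) smooth proper, étale away from \(\mathcal{B}_S\) and tamely ramified along it;
\item finite étale degree \(4\) covers of \(\PP^1_S\smallsetminus \mathcal{B}_S\), tame along \(\mathcal{B}_S\).
\end{enumerate}
The equivalence is given by taking normalisation, which commutes with base change because the ramification is tame and \(\mathcal{B}\) is étale over \(S\). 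Tameness is automatic over \(\ZZ[1/6]\). The tame fundamental groups of the geometric fibres form a locally constant system over \(U\), and the set of degree \(4\) covers of a geometric fibre corresponds to the finite set of homomorphisms from the tame \(\pi_1\) of \(\PP^1\) minus \(n_1+n_2\) points to \(S_4\), up to conjugacy. For each geometric point of \(U\), consider the subset of these homomorphisms with prescribed local monodromy: a \(3\)-cycle at the points of \(B_1\), a \((2,2)\)-element at the points of \(B_2\), and transitive image. These conditions are open and closed, because the local monodromy classes are locally constant in families. The subsets therefore glue to a finite étale \(U\)-scheme \(H(n_1,n_2)\) that represents covers of the punctured line with this monodromy.

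The remaining conditions in the definition of an \(A_4\)-quartic cover are matched as follows. Geometric connectedness of \(C_s\) corresponds to transitivity. The ramification type over \(B_{i,s}\) is read off from the cycle type of the local monodromy. Smoothness of the normalisation \(C\) over \(S\) follows from the local structure of tame covers, which is étale locally of the form \(z \mapsto z^e\) over an étale divisor. Triviality of automorphisms, noted in the preceding remark, ensures that the set-valued functor is the correct one, so no stackiness arises and the étale-locally constant sheaf of isomorphism classes is representable by a finite étale scheme.

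The main obstacle is justifying rigorously, over the mixed-characteristic base \(\ZZ[1/6]\), that the tame \(\pi_1\) of the fibres is locally constant and that normalisation commutes with arbitrary base change. This is the content of the standard construction of Hurwitz schemes, for instance Fulton's construction of Hurwitz schemes over \(\ZZ[1/|G|]\) or Wewers' thesis. The plan is to cite that construction and verify that our fixed-ramification-type conditions cut out an open and closed subscheme of the full Hurwitz scheme of degree \(4\) covers.
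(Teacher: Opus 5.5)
Your proposal is correct and takes essentially the paper's route: the paper simply invokes Fulton's construction of Hurwitz schemes (see also Wewers), adapted to the prescribed ramification types, and then transfers smoothness of relative dimension \(n_1+n_2\) from \(U(n_1,n_2)\) along the finite \'etale map. One precision: it is not the full tame fundamental groups of the fibres that form a locally constant system over \(U(n_1,n_2)\), only their maximal prime-to-\(p\) quotients. That is enough, because \(|S_4| = 24\) is invertible over \(\ZZ[1/6]\), so the sets of degree \(4\) covers with given local monodromy are still locally constant.
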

\begin{proof}
  Follows from arguments analogous to the construction of the Hurwitz scheme of simple coverings due to Fulton \cite[\S~6]{ful:69} (see also, \cite[\S~4]{wew:98}).
  We note that \(U(n_1,n_2) \to \Spec \ZZ[1/6]\) is smooth of relative dimension \(n_1+n_2\).
  Therefore, \(H(n_1,n_2) \to \Spec \ZZ[1/6]\) is also smooth of the same relative dimension.
\end{proof}

Let \[(\mathcal{C} \xrightarrow{\pi} H(n_1,n_2), \mathcal{B}_1, \mathcal{B}_2, \mathcal{C} \xrightarrow{f} \PP^1 \times H(n_1,n_2))\] be the universal \(A_4\)-quartic cover of \(\PP^1\) over \({H(n_1,n_2)}\).
Let \( {\mathcal C}_2 \subset \mathcal{C}\) be the reduced subscheme underlying \(f^{-1}(\mathcal{B}_2)\).
\begin{proposition}
  As divisors on \(\mathcal{C}\), we have
  \[ 2  \cdot {\mathcal C}_2 = f^{*} \mathcal B_2.\]
  Furthermore, the map \(f \colon {\mathcal C}_2 \to \mathcal{B}_2\) is finite \'etale of degree \(2\).
\end{proposition}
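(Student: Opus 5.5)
The statement is étale-local on \(H = H(n_1,n_2)\), and both sides are compatible with flat base change. So I will prove it by working étale-locally on \(\mathcal{B}_2\). The first step is to trivialise the situation. Since \(\mathcal{B}_2 \to H\) is finite étale of degree \(n_2\), after an étale base change \(S \to H\) it splits into \(n_2\) disjoint sections \(\sigma_j \colon S \to \PP^1_S\). Hence it suffices to treat a single section \(\sigma\colon S \to \PP^1_S\) with image \(\Sigma\), disjoint from \(B_1\), and to study \(f\) over a neighbourhood of \(\Sigma\). Passing further to the strict henselisation of \(\PP^1_S\) along \(\Sigma\), we may choose a local coordinate \(t\) with \(\Sigma = \{t=0\}\); that is, the local ring is \(R = \mathcal{O}_{S}\{t\}^{h}\), henselian along \(t=0\).

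The second step is to describe the restricted cover locally. Over \(R\), the cover \(f\) is finite flat of degree 4, tamely ramified along \(t=0\) since \(6\) is invertible, and étale away from it. I will use the structure theorem for tame covers, namely Abhyankar's lemma in families (as in Grothendieck–Murre, or SGA1 XIII, together with \cite{wew:98}). This says that, after a further étale base change on \(S\), the cover decomposes as a disjoint union of Kummer covers \(R[u]/(u^{e}-\alpha t)\), with \(\alpha\) a unit, composed with étale covers. The ramification type along \(\Sigma\) is constant in \(s\), and by hypothesis every geometric fibre has splitting symbol \((1^21^2)\). So the decomposition consists of exactly two pieces, each with \(e=2\). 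Concretely, near \(\Sigma\) we get \(f^{-1}\cong \operatorname{Spec} R[u_1]/(u_1^2-\alpha_1 t)\sqcup \operatorname{Spec} R[u_2]/(u_2^2-\alpha_2 t)\).

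The third step reads off both claims from this local model. In each piece, the reduced preimage of \(\{t=0\}\) is \(\{u_i=0\}\), and it maps isomorphically to \(\Sigma\cong S\). Moreover \(f^*(t) = \alpha_i^{-1}u_i^2\), so the pullback divisor is \(f^*\Sigma = 2\{u_1=0\}+2\{u_2=0\} = 2\,\mathcal{C}_2\). Since the Cartier divisor equality can be checked étale-locally, this gives \(2\cdot\mathcal{C}_2 = f^*\mathcal{B}_2\) globally. Locally, \(\mathcal{C}_2\to\mathcal{B}_2\) is the trivial double cover \(S\sqcup S\to S\), so it is finite étale of degree \(2\). Because being finite étale of degree 2 descends along étale covers, the statement holds over \(H\).

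The main obstacle is the second step: justifying the Kummer normal form in families over an arbitrary base \(S\), as opposed to just over a field. The fibrewise statement is clear. What must be shown is that the two branches and their reduced preimages glue into a scheme that is étale over \(S\), which is a question of flatness and reducedness of \(f^{-1}(\mathcal{B}_2)_{\mathrm{red}}\) over \(H\). My first attempt would bypass the full Abhyankar lemma. Instead I would use that \(H\) is smooth over \(\ZZ[1/6]\), hence reduced, so \(\mathcal{C}_2\) is reduced and finite over \(\mathcal{B}_2\). Its geometric fibres each consist of exactly 2 reduced points, by the constant ramification type. I would then invoke a constancy-of-fibre-cardinality criterion for unramified finite morphisms over a reduced (normal) base to get étaleness. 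Finally, the divisor identity follows because \(f^*\mathcal{B}_2 - 2\mathcal{C}_2\) is effective and has degree zero on every fibre.
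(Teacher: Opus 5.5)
Your steps 1--3 are correct and are essentially the paper's proof, which is a one-line appeal to the structure theorem for finite tame covers over an arbitrary base \cite[\S~4]{ful:69}; that theorem supplies exactly the Kummer normal form in families that you flag as the main obstacle, so you can simply cite it. The fallback in your last paragraph is not needed, and as written it assumes that the fibres of $\mathcal{C}_2 \to \mathcal{B}_2$ are reduced. This does not follow from the ramification type alone, because forming the reduced subscheme need not commute with taking fibres, and your degree-zero argument presupposes it. So either drop the fallback, or reorder it: first prove $2\cdot\mathcal{C}_2 = f^*\mathcal{B}_2$ using that $\mathcal{C}$ is regular and the generic ramification index is $2$, then deduce fibre reducedness by comparing degrees on fibres.
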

\begin{proof}
  This is a simple local calculation using the structure theorem for finite tame covers \cite[\S~4]{ful:69}.
\end{proof}

\begin{remark}\label{rem:reduced}
  Let \(S\) be a reduced scheme and let \((C \xrightarrow{\pi} S, B_1,B_2,  C \xrightarrow{f} \PP^1_S)\) be an \(A_4\)-quartic cover of \(\PP^1\) over \(S\).
  Let \(S \to H(n_1,n_2)\) be the corresponding morphism.
  Then \(\mathcal{C}_2 \times_{H(n_1,n_2)} S\) is simply the reduced scheme underlying \(f^{-1}(B_2)\).
  Indeed, both are reduced schemes with the same support.
\end{remark}

Let \(S\) be a scheme and let \(X_1, X_2,\) and \(Y\) be \(S\)-schemes with morphisms \(X_1 \to Y\) and \(X_2 \to Y\).
By \(\operatorname{Isom}_S(X_1 \to Y, X_2 \to Y)\) we denote the functor that sends an \(S\)-scheme \(T\) to the set of isomorphisms
\[ i \colon X_{1,T} \to X_{2,T}\]
that make the following diagram commute
\[
  \begin{tikzcd}
    X_{1,T}\ar{d}\ar{r}{i} & X_{2,T} \ar{d}\\
    Y_T\ar[equal]{r} & Y_T.
  \end{tikzcd}
\]
If $Y=S$, we write \(\operatorname{Isom}_S(X_1, X_2 )\) for \(\operatorname{Isom}_S(X_1\to S, X_2\to S )\). 

We let \(\cH^{\rm fail}(n_{1},n_2)\) be the functor 
\[ \cH^{\rm fail}(n_1,n_2) = \operatorname{Isom}_{H(n_1,n_2)}(\{\pm 1\} \times \mathcal{B}_2 \to \mathcal{B}_2, \mathcal{C}_2 \to \mathcal{B}_2).\]
Let us pause to understand the functor over \(S = \Spec k\) for an algebraically closed field \(k\).
In this case, an element of \(\cH^{\rm fail}(n_1,n_2)(S)\) is an \(A_4\)-quartic cover of \(\PP^1_k\), say  \((C\to \Spec{k}, B_1,B_2, C \xrightarrow{f} \PP^1_k)\) together with an isomorphism
\[\{\pm 1\} \times B_2 \to \mathcal{C}_{2,k}\]
over \(B_2\).
From \autoref{rem:reduced}, we know that \(C_2 = \mathcal{C}_{2,k}\) is the reduced pre-image of \(B_2\) in \(C\).
An isomorphism over \(B_2\) from \(\{\pm 1\} \times B_2\) to \(C_2\) is simply a labelling by \(\{\pm 1\}\) of the two points of \(C_2\) over every point of \(B_2\).
Thus, \(\cH^{\rm fail}(n_1,n_2)(S)\) is the set of isomorphism classes of \(A_4\)-quartic covers \((C\to \Spec{k}, B_1,B_2, C \xrightarrow{f} \PP^1_k)\) of \(\PP^1_k\) together with a labelling of the two points of \(C\) over each point of \(B_2\).

\begin{proposition}
  The functor \(\cH^{\rm fail}(n_1,n_2)\) is represented by an \(H(n_1,n_2)\)-scheme \(H^{\rm fail}(n_1,n_2)\).
  The natural map \(H^{\rm fail}(n_1,n_2) \to H(n_1,n_2)\) is finite and \'etale of degree \(2^{n_2}\).
\end{proposition}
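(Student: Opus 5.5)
We will represent \(\cH^{\rm fail}(n_1,n_2)\) as an Isom scheme of two finite étale covers of \(\mathcal{B}_2\), and then push it down to \(H(n_1,n_2)\) by Weil restriction. Write \(H = H(n_1,n_2)\). By the preceding Proposition, \(\mathcal{C}_2 \to \mathcal{B}_2\) is finite étale of degree \(2\). Trivially, \(\{\pm 1\}\times \mathcal{B}_2 \to \mathcal{B}_2\) is also finite étale of degree \(2\). Moreover, \(\mathcal{B}_2 \to H\) is finite étale of degree \(n_2\), being the pullback of the universal divisor \(B_2\) over \(U(n_1,n_2)\) along the finite étale map of \Cref{thm:HoverU}.

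The first step is the relative Isom over \(\mathcal{B}_2\). Let
\[ I = \operatorname{Isom}_{\mathcal{B}_2}(\{\pm 1\}\times \mathcal{B}_2, \mathcal{C}_2), \]
which is a functor on \(\mathcal{B}_2\)-schemes. An isomorphism from the trivial double cover \(\{\pm1\}\times T\) to \(\mathcal{C}_{2,T}\) over \(T\) is the same as a section of \(\mathcal{C}_{2,T}\to T\), namely the image of \(\{1\}\times T\). The image of \(\{-1\}\times T\) is then forced to be the complement. That complement is again a section because a degree \(2\) finite étale cover with a section splits as \(T \sqcup T\): the section is open and closed, and its complement is finite étale of degree \(1\). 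Hence \(I \cong \mathcal{C}_2\) as \(\mathcal{B}_2\)-schemes, and it is representable and finite étale of degree \(2\) over \(\mathcal{B}_2\). Equivalently, \(I\) is the \(\mu_2\)-torsor associated to the double cover, since \(p\ge5\) and \(2\) is invertible.

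The second step is Weil restriction along \(\mathcal{B}_2\to H\). Unwinding the definitions, for an \(H\)-scheme \(T\) we have
\[ \cH^{\rm fail}(n_1,n_2)(T) = I(T\times_H \mathcal{B}_2). \]
So \(\cH^{\rm fail}(n_1,n_2)\) is the Weil restriction \(R_{\mathcal{B}_2/H}(I)\). For a finite étale morphism \(\mathcal{B}_2\to H\) and a quasi-projective, indeed finite, \(\mathcal{B}_2\)-scheme \(I\), the Weil restriction is representable. The cleanest way to see this is étale-locally on \(H\). After a surjective étale base change \(H'\to H\) we may assume \(\mathcal{B}_2 = \bigsqcup_{j=1}^{n_2} H'\), because a finite étale cover of constant degree becomes split étale-locally. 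Then \(R(I)\) becomes the fibre product \(I_1\times_{H'}\cdots\times_{H'} I_{n_2}\), where \(I_j\) is the restriction of \(I\) to the \(j\)-th copy. This fibre product is finite étale of degree \(2^{n_2}\) over \(H'\).

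The final step is descent. Finite morphisms are affine, so effective descent along fpqc morphisms holds for the descent datum, which is canonical because the functor is defined over \(H\). The properties "finite", "étale" and "degree \(2^{n_2}\)" also descend along the étale surjection \(H'\to H\).

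The main obstacle is purely formal: justifying representability in the descent step. To avoid it, one could instead cite the standard result (e.g.\ \cite[Exposé~V]{03}, or Bosch–Lütkebohmert–Raynaud) that Weil restriction along a finite locally free morphism of a quasi-projective scheme is representable. Étaleness and degree are then checked on geometric fibres. Over an algebraically closed point, the discussion preceding the statement shows the fibre is exactly the set of \(2^{n_2}\) labellings, one choice of sign per point of \(B_2\).
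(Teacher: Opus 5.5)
Your argument is correct, but it takes a different route from the paper.

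\textbf{The paper's route.} It obtains representability by citing Grothendieck's Hilbert-scheme construction of $\operatorname{Isom}$ schemes. It then checks finiteness, \'etaleness and the degree after an \'etale base change $S\to H(n_1,n_2)$ that trivialises \emph{both} $\mathcal{B}_2\to H(n_1,n_2)$ and $\mathcal{C}_2\to\mathcal{B}_2$. After this base change, $H^{\rm fail}(n_1,n_2)_S=F\times S$, where $F\cong(\ZZ/2\ZZ)^{n_2}$ is the group of fibre-preserving permutations of $\{\pm1\}\times\{1,\dots,n_2\}$.

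\textbf{Your route.} You first note that trivialising a double cover is the same as choosing a section. Hence $\operatorname{Isom}_{\mathcal{B}_2}(\{\pm1\}\times\mathcal{B}_2,\mathcal{C}_2)\cong\mathcal{C}_2$, and $H^{\rm fail}(n_1,n_2)$ is the Weil restriction $R_{\mathcal{B}_2/H(n_1,n_2)}(\mathcal{C}_2)$. You then only need to split $\mathcal{B}_2$ \'etale-locally. Global representability comes from effective descent for affine morphisms, using implicitly that the $\operatorname{Isom}$ functor is an \'etale sheaf, rather than from FGA.

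\textbf{What each buys.} Your version is more self-contained and gives a cleaner global description. A point of $H^{\rm fail}$ is a choice of one preimage point over each point of $B_2$. Also, $H^{\rm fail}(n_1,n_2)\to H(n_1,n_2)$ is naturally a torsor under the finite \'etale group scheme $R_{\mathcal{B}_2/H(n_1,n_2)}(\ZZ/2\ZZ)$, a twisted form of $(\ZZ/2\ZZ)^{n_2}$. This clarifies the paper's remark that this arrow is not a torsor under a constant group. The paper's version is shorter given the citation. It also exhibits locally the constant group $F$ that reappears in the torsor $\widetilde H^{\rm fail}(n_1,n_2)\to\widetilde H(n_1,n_2)$.

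\textbf{Minor points.} In your optional fallback, \'etaleness cannot be checked on geometric fibres alone, because flatness is also needed. This costs nothing, since your \'etale-local description $I_1\times_{H'}\cdots\times_{H'}I_{n_2}$ already gives finite \'etaleness. The appropriate reference for representability of Weil restriction is Bosch--L\"utkebohmert--Raynaud, \S7.6, not SGA1 Expos\'e V.
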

\begin{proof}
  The representability of \(\operatorname{Isom}\) follows from standard arguments using the Hilbert scheme (for example, \cite[\S~4 Variantes]{gro:61}).
  In fact, it is even simpler in our case since the morphisms involved are finite.

  To see that the natural map is finite \'etale of the given degree, we make an \'etale base change \(S \to H(n_1,n_2)\) such that we have an isomorphism \(\{1,\dots, n_2\} \times S \to \mathcal{B}_{2,S}\) over \(S\) and an isomorphism \(\{\pm 1\} \times \mathcal{B}_{2,S} \to \mathcal{C}_{2,S}\) over \(\mathcal{B}_{2,S}\).
  Let \(F\) be the subset of the permutations of \(\{\pm 1\} \times \{1, \dots, n_2\}\) that commute with the projection to \(\{1,\dots,n_2\}\).
  Then, it is easy to see that \(F = (\ZZ/2\ZZ)^{n_2}\) and \(H^{\rm fail}(n_1,n_2)_S = F \times S\).
\end{proof}
We now relate the map \(H^{\rm fail}(n_1,n_2) \to H(n_1,n_2)\) to the failure of the Hasse norm principle.
\begin{theorem}\label{thm:failset}
  Let \(k\) be a finite field of characteristic \(\neq 2,3\).
  Let \((C, B_1, B_2,  C \xrightarrow{f} \PP^1_k)\) be an \(A_4\)-quartic cover of \(\PP^1\) over \(k\).
  The Hasse norm principle fails for \(f\) if and only if the \(k\)-point of \(H(n_1,n_2)\) represented by \(f\) lies in the image of a \(k\)-point of \(H^{\rm fail}(n_1,n_2)\).
\end{theorem}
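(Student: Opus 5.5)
By \Cref{lem:MN} and \Cref{lem: splitting conditions}, the Hasse norm principle fails for \(f\) if and only if there is \emph{no} place \(v\) of \(k(t)\) with a place \(v'\) of \(K=k(C)\) above it satisfying \(e(v'/v)=f(v'/v)=2\). The plan is to show that this condition is equivalent to the existence of a \(k\)-rational labelling of the fibres of \(C_2\to B_2\), that is, a \(k\)-point of \(H^{\rm fail}(n_1,n_2)\) over the given \(k\)-point of \(H(n_1,n_2)\).

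First, determine where \(e(v'/v)=f(v'/v)=2\) can occur. A place with \(e(v'/v)=2\) must lie over a ramified place \(v\). By \Cref{rem:ef}, the inertia at \(v\) is of type \((3,1)\) or \((2,2)\). Type \((3,1)\) gives ramification index \(1\) or \(3\) on \(K\), so such places are excluded. Thus \(v\) corresponds to a closed point \(b\) of \(B_2\) (the branch locus of type \((2,2)\)), and the places \(v'\) over \(v\) correspond to the closed points of the reduced preimage \(C_2\) over \(b\). Since \(C_2\to B_2\) is finite étale of degree \(2\), the fibre over \(b\) is either two closed points, each with residue field \(k(b)\) (splitting symbol \((1^21^2)\)), or a single closed point with residue field quadratic over \(k(b)\) (splitting symbol \((2^2)\)). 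Hence the Hasse norm principle fails precisely when, for every closed point \(b\in B_2\), the fibre \((C_2)_b\) splits as two \(k(b)\)-points. Equivalently, the étale double cover \(C_2\to B_2\) is trivial, i.e. isomorphic to \(\{\pm1\}\times B_2\to B_2\) over \(B_2\). (A finite étale degree \(2\) cover of the finite étale \(k\)-scheme \(B_2=\bigsqcup \Spec k(b)\) is trivial if and only if it is trivial over each \(\Spec k(b)\).) Here we use \Cref{rem:reduced} to identify the fibre of \(\mathcal C_2\) over our \(k\)-point with the reduced preimage of \(B_2\).

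Second, match this with \(k\)-points of \(H^{\rm fail}\). By definition of the Isom functor, a \(k\)-point of \(H^{\rm fail}(n_1,n_2)\) lying over the point \([f]\in H(n_1,n_2)(k)\) is exactly a \(k\)-isomorphism \(\{\pm1\}\times B_2\to C_2\) over \(B_2\). Such an isomorphism exists if and only if \(C_2\to B_2\) is the trivial double cover. Combining this with the first step proves the theorem. The case \(n_2=0\) is consistent: \(H^{\rm fail}=H\), and there are no \((2,2)\) places, so the Hasse norm principle always fails.

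The main care is needed in the first step. We must identify the closed points of \(C_2\) over \(b\) with places of \(K\) over \(v\), and read \(e(v'/v)\) and \(f(v'/v)\) off the scheme structure using \(2\cdot\mathcal C_2=f^*\mathcal B_2\): this gives \(e=2\) at every such point, and \(f(v'/v)\) equals the residue degree of \(C_2\to B_2\) at that point. We also use that the standing hypotheses of \Cref{lem:MN} hold. Namely, \(L/k(t)\) has no constant subextension because \(C\) is geometrically connected and the geometric monodromy is already \(A_4\) by \Cref{rem:A4}, and \(p\nmid 12\).
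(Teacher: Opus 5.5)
Your main argument is the paper's own. Only closed points of $B_2$ can carry a place with $e=f=2$. Over such a point the fibre of $C_2\to B_2$ is either two points with residue field $k(b)$ or one point with quadratic residue field. ``Split over every point of $B_2$'' is then the same as the existence of a $k$-isomorphism $\{\pm1\}\times B_2\to C_2$ over $B_2$, i.e.\ of a $k$-point of $H^{\rm fail}(n_1,n_2)$ over $[f]$. Phrasing this as triviality of the \'etale double cover $C_2\to B_2$ is just a tidier form of the paper's two-case analysis.

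The step that is wrong is your verification of the hypotheses of \Cref{lem:MN}.

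\emph{Why the claim fails.} Geometric connectedness of $C$ and geometric monodromy $A_4$ (\Cref{rem:A4}) do not force $\Gal(L/k(t))\cong A_4$. Geometric connectedness only says that $k$ is algebraically closed in $K$; it says nothing about $L$. The arithmetic monodromy group contains the geometric one, $A_4$, as a normal subgroup with cyclic quotient, inside $S_4$. So it is $A_4$ or $S_4$, and in the latter case $L\supseteq\FF_{q^2}(t)$.

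\emph{The $S_4$ case really occurs.} Suppose $q\equiv 2\pmod 3$ and $x\in B_1$ has odd degree $d$. A Frobenius element $F$ in a decomposition group at $x$ satisfies $F\sigma F^{-1}=\sigma^{q^d}=\sigma^{-1}$, where $\sigma$ is the $3$-cycle generating inertia. No element of $A_4$ inverts a $3$-cycle, so the group must be $S_4$. For example, $t=s^3(s-1)/(s-4)$ over $\FF_5$ has ramification $(3,1)$ over $0,1,\infty$ and nothing else. It is thus a $k$-point of $H(3,0)$ with $\Gal(L/k(t))\cong S_4$.

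\emph{Consequence.} For such covers \Cref{lem:MN} does not apply, and the equivalence itself can fail. Here $K/k(t)$ is an $S_4$-quartic, which always satisfies the Hasse norm principle (the Kunyavski\u{\i}--Voskresenski\u{\i} result quoted in the introduction). On the other hand, for $n_2=0$ every $k$-point lies in the image of $H^{\rm fail}(n_1,0)=H(n_1,0)$. So your remark that the case $n_2=0$ is ``consistent'' is true only under the $A_4$ assumption.

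\emph{What to do instead.} The paper's proof simply applies \Cref{lem:MN} and \Cref{lem: splitting conditions} to $K/k(t)$, tacitly treating $f$ as having arithmetic Galois group $A_4$. You should state $\Gal(L/k(t))\cong A_4$ as an explicit hypothesis rather than claim to derive it from the geometric data. From that hypothesis the absence of constant subextensions does follow, since the geometric monodromy is already all of $A_4$.
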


\begin{proof}
  Let \(K = k(C)\) be the function field of \(C\) and \(k(\PP^1) = k(t)\) the function field of \(\PP^1_k\).
  Let \(C_2 \subset C\) be the reduced pre-image of \(B_2\).
    
  \Cref{lem:MN,lem: splitting conditions} show that the Hasse norm principle fails for $K/k(t)$ if and only if there is no place $v'$ of $K$ lying above a place $v$ of $k(t)$ with $e(v'/v)=f(v'/v)=2$.
  A place \(v\) of \(k(t)\) corresponds to a closed point \(p\) of \(\PP^1_k\); a place of \(K\) over \(v\) corresponds to a closed point \(p'\) of \(C\) over \(p\).
  If \(p\) lies in the complement of \(B_1 \cup B_2\), then \(e(p'/p) = 1\) since \(C \to \PP^1_k\) is unramified there.
  If \(p\) lies in \(B_1\), then \(e(p'/p) = 1\) or \(3\) since the ramification type over points of \(B_2\) is \((3,1)\).

  For a closed point \(p \in B_2\), we have $\sum e(p'/p)f(p'/p)=4$, where the sum runs over all closed points \(p'\) of \(C_2\) lying over $p$.
  Since the ramification type over points of \(B_2\) is $(2,2)$, we have $e(p'/p)=2$ for all \(p'\) over \(p\), and hence $\sum f(p'/p)=2$. This means we are in one of the following two cases: 
\begin{enumerate}
    \item \label{succeed} there is a unique $p'$ lying over \(p\) with $f(p'/p)=2$; or
    \item \label{fail} there are two closed points $p'_1$ and $p'_2$ of $C$ lying over \(p\) with $f(p'_1/p)=f(p'_2/p)=1$. 
\end{enumerate}

If Case~\ref{succeed} occurs for some closed point \(p\) of \(B_2\), then the Hasse norm principle holds for \(f\) by \Cref{lem: splitting conditions}. 
In this case, the residue field of \(C\) at \(p'\) is a degree 2 extension of the residue field of \(\PP^1\) at \(p\).
In particular, there is no isomorphism \(\{\pm 1\} \times p \to p'\) and, therefore, no isomorphism \(\{\pm 1\} \times B_2 \to C_2\).
Therefore, the \(k\)-point of \(H(n_1,n_2)\) represented by \(f\) is not the image of a \(k\)-point of \(H^{\rm fail}(n_{1},n_2)\).

If Case~\ref{fail} occurs for every closed point \(p\) of \(B_2\), then the Hasse norm principle fails for \(f\) by \Cref{lem: splitting conditions}.
In this case, the labelling \(+1 \mapsto p_1', -1 \mapsto p_2'\) of the two points of \(C\) over \(p\) gives an isomorphism \(\{\pm 1\} \times p \to C_{2,p}\). 
Letting \(p\) vary over the closed points of \(B_2\) gives an isomorphism \(\{\pm 1\} \times B_2 \to C_2\).
This isomorphism gives a \(k\)-point of \(H^{\rm fail}(n_1,n_2)\) that maps to the \(k\)-point represented by \(f\).
\end{proof}

\section{Hurwitz spaces for the Chebotarev density theorem}\label{sec:morehurwitz} 
Let \(\phi \colon H^{\rm fail}(n_1,n_2) \to H(n_1,n_2)\) be the natural forgetful map of Hurwitz spaces defined in \Cref{sec:hurwitz}.
Our goal is to use the Chebotarev density theorem to estimate the ratio
\[ \frac{|\phi(H^{\rm fail}(n_1,n_2)(\FF_q))|}{|H(n_1,n_2)(\FF_q)|}. \]
By \Cref{thm:failset}, this ratio represents the proportion 
of \(A_4\)-quartic covers of \(\PP^1_{\FF_q}\) for which the Hasse norm principle fails.
To estimate this ratio, we express \(H^{\rm fail}(n_1,n_2)\) as the quotient of a \(G\)-torsor over \(H(n_1,n_2)\).
This \(G\)-torsor is another Hurwitz space, with some more decoration.
We now define it.

Recall that
\[(\mathcal{C} \xrightarrow{\pi} H(n_1,n_2), \mathcal{B}_1, \mathcal{B}_2, \mathcal{C} \xrightarrow{f} \PP^1 \times H(n_1,n_2))\]
is the universal object over \(H(n_1,n_2)\).
The map \(\mathcal{B}_2 \to H(n_1,n_2)\) is \'etale of degree \(n_2\).
Set
\begin{equation}\label{eqn:Htilde}
  \widetilde H(n_1,n_2) = \operatorname{Isom}_{H(n_1,n_2)}(\{1,\dots, n_2\} \times H(n_1,n_2), \mathcal{B}_2),
\end{equation}
and
\begin{equation}\label{eqn:failproduct}
    \widetilde H^{\rm fail} (n_1,n_2) = H^{\rm fail}(n_1,n_2) \times_{H(n_1,n_2)} \widetilde H(n_1,n_2).
  \end{equation}
The following pull-back diagram summarises the relationships between the spaces defined so far.
All the arrows are finite and \'etale of the indicated degree.
  \begin{equation}\label{eqn:fourspaces-degrees}
  \begin{tikzcd}
    \widetilde H^{\rm fail}(n_1,n_{2})\ar{d}[left]{2^{n_2}}\ar{r}{n_2!} & H^{\rm fail}(n_1,n_2)\ar{d}{2^{n_2}} \\
    \widetilde H(n_1,n_2)\ar{r}[below]{n_{2}!} & H(n_1,n_2).
  \end{tikzcd}
\end{equation}
For a scheme \(S\), let us also recall the \(S\)-points of each of the spaces above.
\begin{description}
\item[\(H(n_1,n_2)\)]\label{eqn:descH} An isomorphism class of an \(A_4\)-quartic cover of \(\PP^1_S\), say \[(C \to S, B_1,B_2,  C \xrightarrow{f} \PP^1_{S}).\]
  Let \(C_2 \subset C\) be the pull-back of \(\mathcal{C}_2\).
\item[\(H^{\rm fail}(n_1,n_2)\)]
  The data above together with an \(S\)-isomorphism \(\{\pm 1\} \times B_2 \to C_2\) compatible with the projection to \(B_2\).
\item[\(\widetilde H(n_1,n_2)\)] A point of \(H(n_1,n_2)\) as above together with an \(S\)-isomorphism \[\{1,\dots,n_2\} \times S \to B_2.\]
\item[\(\widetilde H^{\rm fail}(n_1,n_2)\)] A point of \(H(n_1,n_2)\) as above together with \(S\)-isomorphisms
  \[ \nu \colon \{\pm 1\} \times \{1,\dots,n_2\} \times S \to C_2 \text{ and } \overline\nu \colon \{1,\dots,n_2\} \times S \to B_2\]
  such that the following diagram commutes
  \[
    \begin{tikzcd}
      \{\pm 1\} \times \{1, \dots, n_2\} \times S \ar{r}{\nu}\ar{d} & C_2 \ar{d}{f}\\
      \{1,\dots,n_2\} \times S \ar{r}{\overline \nu} & B_2.
    \end{tikzcd}
    \]
\end{description}

Having described the \(S\)-points, we now define natural group actions on the spaces above under which three of the arrows in \eqref{eqn:fourspaces-degrees} become torsors.
First, the natural action of the symmetric group \(S_{n_2}\) on \(\{1,\dots,n_2\}\) induces an action on \(\widetilde H(n_1,n_2)\).
Under this action, \(\widetilde H(n_1,n_2) \to H(n_1,n_2)\) is an \(S_{n_2}\)-torsor.
By pull-back, \(\widetilde H^{\rm fail}(n_1,n_2) \to H^{\rm fail}(n_1,n_2)\) is also an \(S_{n_2}\)-torsor.

Let \(G \subset \operatorname{Perm}(\{\pm 1\} \times \{1,\dots,n_2\})\) be the subgroup consisting of permutations \(\alpha\) such that for all \(i\in\{1,\dots, n_2\}\), the second coordinates of \(\alpha(+1,i)\) and \(\alpha(-1,i)\) agree.
We have a natural map \(G \to S_{n_2}\) that sends \(\alpha\) to the permutation \(\overline \alpha\) that takes \(i\) to the second coordinate of \(\alpha(\pm 1,i)\).
This map is clearly surjective.
In fact, it has a left inverse that sends \(\sigma \in S_{n_2}\) to the \(\alpha\) that sends \((\pm 1,i)\) to \((\pm 1,\sigma(i))\).
It is easy to see that the kernel of \(G \to S_{n_2}\) is \((\ZZ/2\ZZ)^{n_2}\), and \(G\) is the semidirect product
\[ G = (\ZZ/2\ZZ)^{n_2} \rtimes S_{n_2}\]
for the permutation action of \(S_{n_2}\) on \((\ZZ/2\ZZ)^{n_2}\).  

We have an action of \(G\) on \(\widetilde H^{\rm fail}(n_1,n_2)\).
The element \(\alpha \in G\) acts by \(\nu \mapsto \nu \circ \alpha^{-1}\) and \(\overline \nu \mapsto \overline \nu \circ \overline \alpha^{-1}\).
The map \(\widetilde H^{\rm fail}(n_1,n_2) \to H(n_1,n_2)\) is a \(G\)-torsor under this action.
Observe that the action of the subgroup \(S_{n_2} \subset G\) on \(\widetilde H^{\rm fail}(n_1,n_2)\) is precisely the action induced from the action on \(\widetilde H(n_1,n_2)\).

We now redraw the diagram \eqref{eqn:fourspaces-degrees} and decorate  each arrow with
the group under which it is a torsor.
(The right vertical arrow is not a torsor.) 
\begin{equation}\label{eqn:fourspaces}
  \begin{tikzcd}
    \widetilde H^{\rm fail}(n_1,n_{2})\ar{d}[left]{(\ZZ/2\ZZ)^{n_2}}\ar{r}{S_{n_2}}\ar{rd}{G} & H^{\rm fail}(n_1,n_2)\ar{d} \\
    \widetilde H(n_1,n_2)\ar{r}[below]{S_{n_2}} & H(n_1,n_2).
  \end{tikzcd}
\end{equation}

We have expressed \(H^{\rm fail}(n_1,n_2)\) as the quotient of a \(G = (\ZZ/2\ZZ)^{n_2} \rtimes S_{n_2}\)-torsor by the action of \(S_{n_2}\).
To apply the Chebotarev density theorem, we must now understand two quantities:
\begin{enumerate}
\item the number of elements of \(G\) that are conjugate to an element of \(S_{n_2} \subset G\), and
\item the number of geometric components of \(\widetilde{H}^{\rm fail}(n_{1},n_2)\) over each geometric component of \(H(n_1,n_2)\). 
  \end{enumerate}

  We take up the first issue here and the second in the next section.
  To lighten notation, set \(\ell = n_2\) so that \(G = (\ZZ/2\ZZ)^{\ell} \rtimes S_{\ell}\).
  Let \(\Phi \subset G\) be the union \(\Phi = \bigcup_{h \in G} h S_\ell h^{-1}\).
  Let \(s(\ell,i)\) be the unsigned Sterling number of the first kind.
  This is the number of elements in \(S_{\ell}\) whose disjoint cycle decomposition has exactly \(i\) cycles.
  \begin{proposition}\label{prop:sterling}
  An element \((u,\sigma) \in G\) lies in \(\Phi\) if and only if there exists \(w \in (\ZZ/2\ZZ)^\ell\) such that \(w + \sigma(w) = u\).
  Consequently, the size of \(\Phi\) is
  \[ |\Phi| = \sum_{i = 1}^\ell 2^{\ell-i} s(\ell,i).\]
\end{proposition}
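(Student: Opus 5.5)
We work directly with the semidirect product law in \(G=(\ZZ/2\ZZ)^{\ell}\rtimes S_\ell\). Write elements as pairs \((u,\sigma)\) with \(u\in(\ZZ/2\ZZ)^\ell\), \(\sigma\in S_\ell\). The product is \((u,\sigma)(v,\tau)=(u+\sigma(v),\sigma\tau)\), where \(S_\ell\) permutes coordinates, and \(S_\ell\subset G\) is the subgroup \(\{(0,\sigma)\}\). The inverse is \((w,\tau)^{-1}=(\tau^{-1}(w),\tau^{-1})\), using that \(-w=w\).

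\textbf{Characterising \(\Phi\).} The first step is to compute a general conjugate of an element of \(S_\ell\):
\[
(w,\tau)(0,\rho)(w,\tau)^{-1}=(w,\tau\rho)\,(\tau^{-1}(w),\tau^{-1})=(w+\tau\rho\tau^{-1}(w),\ \tau\rho\tau^{-1}).
\]
Setting \(\sigma=\tau\rho\tau^{-1}\), this element is \((w+\sigma(w),\sigma)\). As \(w\) ranges over \((\ZZ/2\ZZ)^\ell\) and \(\tau,\rho\) over \(S_\ell\), we obtain exactly the pairs \((u,\sigma)\) with \(u=w+\sigma(w)\) for some \(w\). Conversely, given such a pair, we take \(\tau=1\) and \(\rho=\sigma\). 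This proves the first assertion.

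\textbf{Counting.} For fixed \(\sigma\), the admissible \(u\) form the image of the \(\FF_2\)-linear map \(w\mapsto w+\sigma(w)=(1+\sigma)w\). Its kernel is the space of \(\sigma\)-invariant vectors, i.e.\ vectors constant on each cycle of \(\sigma\). If \(\sigma\) has \(i\) cycles (counting fixed points), this kernel has dimension \(i\), so the image has size \(2^{\ell-i}\). Summing over \(\sigma\) and grouping by the number of cycles, using the definition of \(s(\ell,i)\), gives
\[
|\Phi|=\sum_{\sigma\in S_\ell}2^{\ell-i(\sigma)}=\sum_{i=1}^{\ell}2^{\ell-i}s(\ell,i).
\]

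\textbf{Main obstacle.} There is no real obstacle. The only point needing care is getting the multiplication and inverse conventions in the semidirect product right, consistent with the embedding \(\sigma\mapsto\alpha\) given above. Either convention yields the same set \(\{(w+\sigma(w),\sigma)\}\), possibly after replacing \(w\) by \(\sigma^{-1}(w)\), which is a bijection, so the count is unaffected.
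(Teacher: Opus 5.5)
Your proof is correct and takes essentially the same route as the paper. The paper conjugates an arbitrary \((u,\sigma)\) by \((w,\tau)\) and asks when the result lies in \(S_\ell\); you instead conjugate \((0,\rho)\) and read off the result, which is the same computation in the other direction, and your counting via the kernel of \(w\mapsto w+\sigma(w)\) matches the paper's argument.
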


\begin{proof}
  Consider an arbitrary \((w,\tau) \in G\).
  We have
  \[ (w, \tau) (u, \sigma) (w, \tau)^{-1} = (w + \tau(u)+ \tau\sigma\tau^{-1}(w), \tau\sigma\tau^{-1}). \]
  This element lies in \(S_\ell\) if and only if
  \[ w + \tau(u) + \tau\sigma\tau^{-1}(w) = 0.\]
  Apply \(\tau^{-1}\) and replace \(\tau^{-1}(w)\) by \(w\) to get
  \[ w + \sigma(w) = u.\]

  We now find the size of \(\Phi\).
  Let us first fix \(\sigma \in S_\ell\) and find the number of \(u\) such that \((u,\sigma) \in \Phi\).
  We have proved that \((u, \sigma) \in \Phi\) if and only if \(u \in (\ZZ/2\ZZ)^\ell\) is in the image of the linear map \(w \mapsto w + \sigma(w)\).
  Let \(e_1, \dots, e_\ell \in (\ZZ/2\ZZ)^\ell\) be the standard basis vectors.
  Given \(T \subset \{1, \dots, \ell\}\), set \(e_{T} = \sum_{t \in T}e_t\).
  It is easy to check that the kernel of the map \(w \mapsto w + \sigma(w)\) is spanned by the vectors \(e_{T}\) as \(T\) ranges over the cycles in the disjoint cycle decomposition of \(\sigma\).
  Thus, if \(\sigma\) has \(i\) cycles, then the kernel of \(w \mapsto w + \sigma(w)\) has dimension \(i\), and hence its image has dimension \(\ell-i\).
  We conclude that the number of \(u\) such that \((u,\sigma) \in \Phi\) is \(2^{\ell-i}\).
  We now take the sum as \(\sigma\) varies.
\end{proof}

Let us estimate the quantity in \Cref{prop:sterling}.
For functions \(f, g \colon \mathbb{Z}_{\geq 0} \to \mathbb{R}\), we write \(f \sim g\) if \(\lim_{n \to \infty}(f(n)/g(n))\) is a positive (finite) number.
\begin{proposition}\label{prop:sterlingapprox}
  Let \(b_\ell = \sum_{i = 1}^\ell 2^{-i} s(\ell,i)/\ell!\).
  Then \(b_\ell < \ell^{-1/2}\) and \(b_\ell \sim \ell^{-1/2}\).
\end{proposition}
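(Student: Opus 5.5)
The plan is to use the generating function for unsigned Stirling numbers of the first kind,
\[ \sum_{i=1}^\ell s(\ell,i)\, x^i = x(x+1)(x+2)\cdots(x+\ell-1), \]
and evaluate it at \(x = 1/2\). This gives the closed product form
\[ b_\ell = \frac{1}{\ell!}\prod_{j=0}^{\ell-1}\Bigl(j+\tfrac12\Bigr) = \prod_{j=1}^{\ell}\frac{j-\tfrac12}{j} = \prod_{j=1}^{\ell}\Bigl(1-\frac{1}{2j}\Bigr) = \frac{\binom{2\ell}{\ell}}{4^\ell}. \]
The last equality follows from \((2\ell)!/(\ell!^2 4^\ell) = \prod_j (2j-1)/(2j)\). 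Equivalently, \(b_\ell = \Gamma(\ell+\tfrac12)/(\Gamma(\tfrac12)\Gamma(\ell+1))\). Both claims of the proposition then reduce to standard facts about the central binomial coefficient.

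For the asymptotic \(b_\ell \sim \ell^{-1/2}\), I would apply Stirling's formula to \(\binom{2\ell}{\ell}\). This gives \(\binom{2\ell}{\ell}4^{-\ell} \to (\pi\ell)^{-1/2}\) up to a factor tending to \(1\). Hence \(b_\ell\,\ell^{1/2} \to \pi^{-1/2}\), a positive finite number, which is exactly what \(\sim\) requires. Alternatively, Gautschi/Wendel's inequality for the Gamma ratio \(\Gamma(\ell+\frac12)/\Gamma(\ell+1)\) gives the same limit.

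For the strict inequality \(b_\ell < \ell^{-1/2}\), I would prove the stronger bound \(b_\ell^2 \le 1/(2\ell+1)\), which suffices since \(1/(2\ell+1) < 1/\ell\). Writing \(P = \prod_{j=1}^\ell \frac{2j-1}{2j}\) and \(Q = \prod_{j=1}^\ell \frac{2j}{2j+1}\), the inequality \(\frac{2j-1}{2j} < \frac{2j}{2j+1}\) gives \(P < Q\). Since \(PQ = \frac{1}{2\ell+1}\) (the product telescopes), we get \(P^2 < PQ = 1/(2\ell+1)\). A simple induction on \(\ell\) showing that \(b_\ell^2(2\ell+1)\) is decreasing with value \(3/4\) at \(\ell = 1\) would also work.

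The only real step is recognizing the rising-factorial generating function at \(x=1/2\). Once that is done, both claims are routine, and there is no serious obstacle. The main care needed is the correct normalisation: the sum in \(b_\ell\) uses \(2^{-i}\), which corresponds to \(x = 1/2\), and dividing by \(\ell!\) turns the rising factorial \((1/2)(3/2)\cdots(\ell - 1/2)\) into the product \(\prod_{j=1}^{\ell}(1 - \frac{1}{2j})\).
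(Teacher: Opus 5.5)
Your proof is correct. Its first step matches the paper's: the paper substitutes $x=1/2$ into $(1-t)^{-x}=\sum_{\ell}\sum_i \frac{s(\ell,i)}{\ell!}t^\ell x^i$ and extracts the $t^\ell$ coefficient. This is the same as evaluating the rising factorial $x(x+1)\cdots(x+\ell-1)$ at $x=1/2$, so both of you arrive at $b_\ell=\prod_{j=1}^{\ell}\bigl(1-\tfrac{1}{2j}\bigr)$.

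You diverge in the estimation step.
\begin{itemize}
\item The paper bounds each factor using $1-x<e^{-x}$, then uses $\log\ell<\sum_{i=1}^{\ell}1/i$ to get $b_\ell<\exp\bigl(-\tfrac12\sum_{i\le\ell}1/i\bigr)<\ell^{-1/2}$.
\item For $b_\ell\sim\ell^{-1/2}$ the paper cites two standard facts: $\prod_{j\le\ell}\bigl(1-\tfrac1{2j}\bigr)e^{1/(2j)}$ converges to a positive limit, and $\sum_{i\le\ell}1/i-\log\ell$ converges. Together these give a positive finite limit for $b_\ell\,\ell^{1/2}$, but the paper never identifies that limit.
\item You instead recognise $b_\ell=\binom{2\ell}{\ell}4^{-\ell}$. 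This lets you apply Stirling's formula to get the explicit limit $b_\ell\,\ell^{1/2}\to\pi^{-1/2}$.
\item Your Wallis-type telescoping argument ($P<Q$ and $PQ=1/(2\ell+1)$) is fully elementary and gives the sharper bound $b_\ell<(2\ell+1)^{-1/2}$.
\end{itemize}

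So your route gives more precise information: the exact constant and a stronger inequality. The cost is invoking Stirling's formula (or Wallis) for the asymptotic. The paper's route needs only the exponential inequality and the harmonic-series comparison, and one chain of estimates serves both claims. Either is enough for the later use of this proposition in the proof of the main theorem, which needs only $b_{n_2}<n_2^{-1/2}$.
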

\begin{proof}
  The unsigned Sterling numbers satisfy the identity
  \[ (1-t)^{-x} = \sum_{\ell=0}^{\infty} \sum_{i = 1}^{\ell} \frac{s(\ell,i)}{\ell!} t^\ell x^i.\]
  Substituting \(x = 1/2\) gives
  \[ (1-t)^{-1/2} = \sum_{\ell=0}^{\infty} \sum_{i = 1}^{\ell} \frac{s(\ell,i)}{\ell!} t^\ell 2^{-i}.\]
  Taking the \(\ell\)-th derivative of both sides and setting \(t = 0\) gives
  \[ \frac12 \cdot \frac32 \cdots \frac{2\ell-1}{2} = \sum_{i=1}^\ell s(\ell,i)2^{-i}.\]
  Dividing both sides by \(\ell!\) yields
  \begin{equation*}
    \left(1 - \frac{1}{2} \right) \left(1 - \frac{1}{4}\right) \cdots \left(1 - \frac{1}{2\ell}\right) = b_\ell.
  \end{equation*}
  Since \(1-x < \exp(-x)\) for \(x > 0\), we get
  \begin{equation}\label{eqn:firstestimate}
    b_\ell < \exp\left(-\sum_{i=1}^\ell \frac{1}{2i}\right).
  \end{equation}
  Furthermore, since
\begin{equation}\label{eqn:secondestimate}
    \log \ell  < \sum_{i=1}^\ell 1/i,
  \end{equation}
  the right-hand side of (\ref{eqn:firstestimate}) is bounded above by  \[\exp\left(-\frac{1}{2}\log \ell\right) = \ell^{-1/2}.\]
  It is standard that the ratio of the two sides of \eqref{eqn:firstestimate} and the difference of the two sides of \eqref{eqn:secondestimate} both approach  
  constants as \(\ell \to \infty\).
  It follows that \(b_\ell \sim \ell^{-1/2}\).
\end{proof}

\section{Connected components of the Hurwitz spaces}\label{sec:comphurwitz}
We now turn to the number of geometric components of the Hurwitz spaces and their fields of definition.
We have the following comparison theorem (see also \cite[Lemma~2.3.5]{lan.lev:25-1} and \cite[Lemma~10.3]{liu.woo.zur:24}).
\begin{proposition}\label{prop:components-complex}
  Let \(H\) be any finite \'etale cover of \(U(n_1,n_2)\), for example, any of the four spaces in \eqref{eqn:fourspaces}. 
  Reduction to characteristic \(p\) gives a bijection between the geometric components of \(H \otimes \QQ\) and the geometric components of \(H \otimes \FF_p\).
\end{proposition}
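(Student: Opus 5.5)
The plan is to compare geometric components of \(H\) over \(\QQ\) and over \(\FF_p\) through the tame specialisation theory of étale fundamental groups. The base \(U(n_1,n_2)\) is the complement, inside the smooth \(\ZZ[1/6]\)-scheme \(\PP^{n_1}\times\PP^{n_2}\), of a discriminant-type locus. Over \(\ZZ_{(p)}\) this locus should be a relative normal crossings divisor, or at least extend to one after passing to a suitable compactification.

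\textbf{Step 1 (setup).} Work over \(R=W(\overline\FF_p)\) or the strict henselisation of \(\ZZ_{(p)}\), with generic geometric point \(\bar\eta\) over \(\overline\QQ\) and special point \(\bar s\) over \(\overline\FF_p\). It suffices to show that the components of \(H_{\bar\eta}\) and \(H_{\bar s}\) correspond bijectively. We use the fact that reduction commutes with passing to \(\overline\QQ\) and \(\overline\FF_p\) via standard base change.

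\textbf{Step 2 (monodromy description).} Since \(H\to U\) is finite étale, its geometric components over a geometrically connected base \(U_{\bar x}\) are the orbits of \(\pi_1(U_{\bar x})\) on the fibre over a base point. The map \(H\to U_R\) is finite étale of degree prime to \(p\) in all relevant cases, since the monodromy groups are built from \(A_4\), \(S_{n_1}\), \(S_{n_2}\) and \((\ZZ/2)^{n_2}\) acting on tame covers. In any case, the monodromy factors through the tame fundamental group, because the covers of \(\PP^1\) involved are tamely ramified. Grothendieck's specialisation theorem (SGA~1, Exp.~XIII, or Fulton's argument in \cite{ful:69}) then says the following: for the complement \(U_R\) of a relative normal crossings divisor in a smooth proper \(R\)-scheme, the specialisation map \(\pi_1^{t}(U_{\bar\eta})\to\pi_1^{t}(U_{\bar s})\) induces an isomorphism on maximal prime-to-\(p\) quotients, and is surjective on tame quotients.

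\textbf{Step 3 (comparing orbits).} \(H_R\) is finite étale over \(U_R\), and \(R\) is strictly henselian with \(U_R\) normal. Hence the components of \(H_R\) restrict to components of both \(H_{\bar\eta}\) and \(H_{\bar s}\). To see this, identify components with \(\pi_1(U_R)\)-orbits on the fibre, and use that \(\pi_1(U_{\bar s})\to\pi_1(U_R)\) and \(\pi_1(U_{\bar\eta})\to\pi_1(U_R)\) have the same image on the relevant tame quotient, by Step 2 and Zariski–Nagata purity. The two orbit decompositions therefore agree. The bijection is induced by taking closures in \(H_R\) of generic components and then specialising, which is the reduction map.

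\textbf{Main obstacle.} The hardest step is verifying the relative normal crossings hypothesis, namely that the boundary of \(U(n_1,n_2)\) in a suitable compactification is tame and has normal crossings over \(\ZZ[1/6]\). The discriminant locus in \(\PP^{n_1}\times\PP^{n_2}\) is not normal crossings. I would avoid this issue as follows. Work instead with the ordered configuration space \(\widetilde U\), an \(S_{n_1}\times S_{n_2}\)-cover of \(U\) whose order can be divisible by \(p\), and compute components of Hurwitz spaces via the fibration \(\widetilde U\to\) configuration spaces of fewer points. Each fibre is \(\PP^1\) minus finitely many disjoint sections, whose prime-to-\(p\) and tame fundamental groups specialise isomorphically by Grothendieck's theorem for curves. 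Inductively this gives the comparison for \(\widetilde U\); descending to \(U\) is a finite group quotient handled by orbit counting. Alternatively, I would cite directly the argument of \cite[Lemma~2.3.5]{lan.lev:25-1} and \cite[Lemma~10.3]{liu.woo.zur:24}, which carry out exactly this comparison for tame Hurwitz spaces.
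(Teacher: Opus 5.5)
Your route is genuinely different from the paper's and can be made to work. As written, however, it leaves the tameness input, which is load-bearing, unjustified, and it contains two false auxiliary claims.

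The paper never uses fundamental-group specialisation. It composes with the finite étale map \(U(n_1,n_2)\to\PP^{n}-\Delta\), \((B_1,B_2)\mapsto B_1+B_2\), where \(n=n_1+n_2\), so that \(H\) becomes a finite étale cover of the complement of the discriminant. It then invokes Fulton's reduction theorem \cite[Theorem~3.3, Lemma~A.3]{ful:69}, following the argument of \cite[Corollary~7.5]{ful:69}. That theorem is tailored to covers of \(\PP^n\) branched along \(\Delta\), so the failure of normal crossings, which you spend most of your effort circumventing, never arises.

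Your approach instead puts the ordered configuration space in a normal crossings situation, applies SGA~1~XIII there, and descends along \(S_{n_1}\times S_{n_2}\). This costs more machinery but yields more, namely surjectivity of tame specialisation for the ordered space. The descent step is fine once you note that the bijection upstairs is induced by taking closures in the family over \(R\), and is therefore \(S_{n_1}\times S_{n_2}\)-equivariant.

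The points to repair are the following.

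(i) \(H\to U\) need not have degree prime to \(p\): for example, \(\widetilde H(n_1,n_2)\to H(n_1,n_2)\) has degree \(n_2!\). Drop that remark.

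(ii) Tameness of \(H\to U\) along the boundary does not follow from the covers of \(\PP^1\) parametrised by \(H\) being tame. That is a statement about different maps, and for an arbitrary finite étale \(H\), as in the statement, no covers of \(\PP^1\) are in sight. Tameness is exactly the hypothesis SGA~1~XIII needs, so this is a genuine gap.
\begin{itemize}
\item For the four spaces in \eqref{eqn:fourspaces} you can fill it by a braid computation. Up to conjugation, every local monodromy around a boundary divisor is a half-twist or a Dehn twist around a block of branch points. A Dehn twist acts on (decorated, numbered) monodromy data by simultaneous conjugation of the block's entries by their product, which is an element of \(A_4\) of order at most \(3\). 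The square of a half-twist is such a Dehn twist. So all local monodromies have order dividing \(12\), which is prime to \(p\geq 5\).
\item For a general \(H\) you need a separate argument ruling out wild ramification along the special fibre of the boundary.
\end{itemize}

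(iii) For \(\PP^1\) minus disjoint sections, the tame fundamental group specialises only surjectively; it specialises isomorphically only on prime-to-\(p\) quotients. Surjectivity is all you need, but the claim as you wrote it is false.

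(iv) The inductive fibration argument is only gestured at. The cleanest way to reach a normal crossings situation is to note that, for \(n\geq 3\), the ordered configuration space of \(n\) points on \(\PP^1\) is \(\mathrm{PGL}_2\times M_{0,n}\). This is the complement of a relative normal crossings divisor in \(\PP^3\times\overline{M}_{0,n}\) over \(\ZZ\), so SGA~1~XIII applies to it directly. The braid computation in (ii) also covers its boundary divisors.
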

\begin{proof}
  We have a morphism \(\PP^{n_1} \times \PP^{n_2} \to \PP^{n_1+n_2}\) that sends a pair of effective divisors \((B_1,B_2)\) of degrees \(n_1\) and \(n_2\) to the divisor \(B_1 + B_2\).
  Let \(\Delta \subset \PP^{n_1+n_2}\) be the discriminant divisor.
  Then \(U(n_1,n_2) \to \PP^{n_1+n_2} - \Delta\) is a finite \'etale cover.
  Thus, \(H\) is a finite \'etale cover of \(\PP^{n_1+n_2} - \Delta\).
  The result now follows from Fulton's reduction theorem \cite[Theorem~3.3, Lemma~A.3]{ful:69}, following an argument similar to \cite[Corollary 7.5]{ful:69}.
\end{proof}

\begin{figure}
  \centering
  \begin{tikzpicture}[scale=0.5]
    \draw[fill] (-2,0) circle (0.1)  (-1,0) circle (0.1) (0,0) node {\(\cdots\)} node[above=0.2cm] {\(B_1\)} (1,0) circle (0.1) (2,0) circle (0.1);
    \coordinate (base) at (3.5,-3);

    \draw (base) -- (-2,-0.4);
    \draw [postaction={decorate}, decoration={markings, mark=at position 0.5 with \arrow{To[reversed]}}]
    plot[smooth, tension=1] coordinates {(-2,-0.4) (-1.6,0) (-2,0.4) (-2.4,0) (-2,-0.4)};

    \draw (base) -- (-1,-0.4);
    \draw [postaction={decorate}, decoration={markings, mark=at position 0.5 with \arrow{To[reversed]}}]
    plot[smooth, tension=1] coordinates {(-1,-0.4) (-0.6,0) (-1,0.4) (-1.4,0) (-1,-0.4)};

    \draw (base) -- (1,-0.4);
    \draw [postaction={decorate}, decoration={markings, mark=at position 0.5 with \arrow{To[reversed]}}]
    plot[smooth, tension=1] coordinates {(1,-0.4) (1.4,0) (1,0.4) (0.6,0) (1,-0.4)};

    \draw (base) -- (2,-0.4);
    \draw [postaction={decorate}, decoration={markings, mark=at position 0.5 with \arrow{To[reversed]}}]
    plot[smooth, tension=1] coordinates {(2,-0.4) (2.4,0) (2,0.4) (1.6,0) (2,-0.4)};

    \begin{scope}[xshift=7cm]
    \draw[fill] (-2,0) circle (0.1)  (-1,0) circle (0.1) (0,0) node {\(\cdots\)} node[above=0.2cm] {\(B_2\)} (1,0) circle (0.1) (2,0) circle (0.1);
    \draw (base) -- (-2,-0.4);
    \draw [postaction={decorate}, decoration={markings, mark=at position 0.5 with \arrow{To[reversed]}}]
    plot[smooth, tension=1] coordinates {(-2,-0.4) (-1.6,0) (-2,0.4) (-2.4,0) (-2,-0.4)};

    \draw (base) -- (-1,-0.4);
    \draw [postaction={decorate}, decoration={markings, mark=at position 0.5 with \arrow{To[reversed]}}]
    plot[smooth, tension=1] coordinates {(-1,-0.4) (-0.6,0) (-1,0.4) (-1.4,0) (-1,-0.4)};

    \draw (base) -- (1,-0.4);
    \draw [postaction={decorate}, decoration={markings, mark=at position 0.5 with \arrow{To[reversed]}}]
    plot[smooth, tension=1] coordinates {(1,-0.4) (1.4,0) (1,0.4) (0.6,0) (1,-0.4)};

    \draw (base) -- (2,-0.4);
    \draw [postaction={decorate}, decoration={markings, mark=at position 0.5 with \arrow{To[reversed]}}]
    plot[smooth, tension=1] coordinates {(2,-0.4) (2.4,0) (2,0.4) (1.6,0) (2,-0.4)};
  \end{scope}
  
  \end{tikzpicture}
  \caption{We describe a point of \(H(n_1,n_2)_{\CC}\) over a point of \(U(n_1,n_2)_{\CC}\) in terms of monodromy data consisting of elements of \(S_4\) associated to each loop as shown.  We describe a point of \(H^{\rm fail}(n_1,n_2)_{\CC}\) similarly, using decorated monodromy data.}\label{fig:diagram}
\end{figure}

We analyse the connected components of the four spaces in \eqref{eqn:fourspaces} over \(\CC\).
We take a two-step approach.
In the first step, we understand the components of \(\widetilde H(n_1,n_2)\), \(\widetilde H^{\rm fail}(n_1,n_2)\), and \(H^{\rm fail}(n_1,n_2)\) lying over a given connected component of \(H(n_1,n_2)\).
In the second step, we understand the connected components of \(H(n_1,n_2)\).

Choose the following base point \(B=(B_1,B_2) \in U(n_1,n_2)_{\CC}\):
identify the \(\CC\)-points of \(\PP^1-\{\infty\}\) with the points of the complex plane \(\CC\);
our \(B_1\) consists of the points \((-n_1+\mathbf{i}), \dots, (-1+\mathbf{i})\) and \(B_2\) consists of the points \((1+\mathbf{i}), \dots, (n_{2}+\mathbf{i})\) (see \Cref{fig:diagram} for a rough sketch).
An \(A_4\)-quartic cover of \(\PP^1_{\CC}\) branched over \((B_1,B_2)\) is defined by the monodromy map
\[ \mu \colon \pi_1(\PP^1_{\CC} - (B_1 \cup B_2), 0) \to S_4.\]
We choose standard generators \((\alpha_1, \dots, \alpha_{n_1}, \beta_{1}, \dots, \beta_{n_2})\) for \(\pi_1(\PP^1_{\CC} - (B_1 \cup B_2), 0)\) as shown in \Cref{fig:diagram}. 
The only relation they satisfy is 
\[ \alpha_1 \cdots \alpha_{n_1} \cdot \beta_1 \cdots \beta_{n_2} = 1.\]
(Our convention is that the product above is represented by the loop that follows \(\alpha_1\), then \(\alpha_2\), and so on.  Then the product is homotopic to a giant clockwise loop based at \(0\).)
The monodromy map \(\mu\) is equivalent to an \((n_1+n_2)\)-tuple of permutations in \(S_4\), say \((\sigma_1, \dots, \sigma_{n_1},\tau_1, \dots, \tau_{n_2})\) such that
\[ \sigma_1\cdots \sigma_{n_1} \cdot \tau_1 \cdots \tau_{n_2} = \id.\]
Since the cover has ramification of type \((3,1)\) over points of \(B_1\), each \(\sigma_i\) is a \(3\)-cycle, and similarly, each \(\tau_j\) is a \((2,2)\)-cycle (a product of two disjoint \(2\)-cycles).
Two such tuples \((\sigma_1, \dots, \sigma_{n_1},\tau_1, \dots, \tau_{n_2})\) and  \((\sigma_1', \dots, \sigma_{n_1}',\tau_1', \dots, \tau_{n_2}')\)
define isomorphic \(A_4\)-quartic covers of $\PP^1_\CC$ branched over $B$ if and only if there exists $g\in S_4$ such that $g\sigma_ig^{-1}=\sigma_i'$ and $g\tau_jg^{-1}=\tau_j'$ for all $i\in\{1,\dots, n_1\}$ and all $j\in\{1,\dots, n_2\}$.

A point of \(H^{\rm fail}(n_1,n_2)\) over \(B \in U(n_1,n_2)_{\CC}\) consists of an \(A_4\)-quartic cover of $\PP^1_\CC$ branched over $B$ described by the monodromy \(\mu\) as above, together with a labelling by \(\{\pm 1\}\) of the two points of the quartic cover over every point of \(B_2\).
Consider the \(j\)-th point of \(B_2\), around which the monodromy \(\tau_j\) is the product of two disjoint \(2\)-cycles \((pq)(rs)\).
The two points of the \(A_4\)-quartic cover over this point correspond naturally to the 2-cycles \((pq)\) and \((rs)\).
Labelling these two points is equivalent to labelling these 2-cycles.
We indicate the labelling of the cycles by a decoration---we underline the 2-cycle labelled by \(+1\).
We use \(\hat{\tau}_j\) to refer to a \((2,2)\)-cycle \(\tau_j\) together with the underlining of one of its \(2\)-cycles.

The conjugation action of \(S_4\) on the set of \((2,2)\)-cycles lifts to an action on the set of decorated \((2,2)\)-cycles as follows.
We set
\[ g \cdot \underline{(pq)}(rs) \cdot g^{-1} = \underline{g(pq)g^{-1}} \cdot {g(rs)g^{-1}}.\]
Then the points of \(H^{\rm fail}(n_1,n_2)\) over \(B\) are given by decorated monodromy data \[\hat{\mu} = (\sigma_1, \dots, \sigma_{n_1}, \hat{\tau}_1, \dots, \hat{\tau}_{n_2}),\] modulo simultaneous conjugation by \(S_4\).  
The map \(H^{\rm fail}(n_1,n_2) \to H(n_1,n_2)\) simply forgets the decoration.

The fundamental group \(\pi_1(U(n_1,n_2)_{\CC}, B)\) acts on the \(\CC\)-points of the fibres over \(B\) of the maps \(H(n_1,n_2)_{\CC} \to U(n_1,n_2)_{\CC}\) and \(H^{\rm fail}(n_1,n_2)_{\CC} \to U(n_1,n_2)_{\CC}\).
These actions are given by well-known Hurwitz moves; see for example \cite[\S1.5]{ful:69}.
We recall some important Hurwitz moves for the convenience of the reader.
An anti-clockwise half-twist that exchanges the \(i\)th and \((i+1)\)th point of \(B_1\) defines a closed loop in $U(n_1,n_2)_\CC$ based at $B$.
The action of this loop on the points of \(H^{\rm fail}(n_1,n_2)_{\CC}\) over \(B\) is given in terms of decorated monodromy by
\begin{equation}\label{eqn:halftwist1}
  (\sigma_1,\dots, \sigma_{n_1}, \hat{\tau}_1,\dots, \hat{\tau}_{n_2}) \mapsto (\sigma_1,\dots,\sigma_{i-1}, \sigma_{i}\sigma_{i+1}\sigma_{i}^{-1}, \sigma_i,\dots, \sigma_{n_1}, \hat{\tau}_1,\dots, \hat{\tau}_{n_2}).
\end{equation}
Similarly, an anti-clockwise half-twist that exchanges the \(j\)th and \((j+1)\)th points of \(B_2\) acts by
\begin{equation}\label{eqn:halftwist2}
  (\sigma_1,\dots, \sigma_{n_1}, \hat{\tau}_1,\dots, \hat{\tau}_{n_2}) \mapsto (\sigma_1,\dots, \sigma_{n_1}, \hat{\tau}_1,\dots,\hat{\tau}_{j-1} ,\tau_{j}\hat{\tau}_{j+1}\tau_{j}^{-1}, \hat{\tau}_j,\dots, \hat{\tau}_{n_2}).
\end{equation}
An anti-clockwise half-twist exchanging the last point of $B_1$ and the first point of $B_2$ is not a closed loop in $U(n_1,n_2)_{\CC}$. However, the anti-clockwise full twist is a closed loop; this full twist acts by
\begin{equation}\label{eqn:fulltwist}
  (\sigma_1,\dots, \sigma_{n_1}, \hat{\tau}_1,\dots, \hat{\tau}_{n_2}) \mapsto
(\sigma_1,\dots, \sigma_{n_1-1}, (\sigma_{n_1}\tau_1)^{2}\sigma^{-1}_{n_1},\sigma_{n_1}\hat{\tau}_1 \sigma^{-1}_{n_1}, \hat{\tau}_2,\dots, \hat{\tau}_{n_2}).
\end{equation}
The action on the fibres of \(H(n_1,n_2)_{\CC} \to U(n_1,n_2)_{\CC}\) is given by the same formulas, but without the decoration.

A point of \(\widetilde H(n_1,n_2)_{\CC}\) over \(B \in U(n_1,n_2)_{\CC}\) consists of a point of \(H(n_1,n_2)_{\CC}\) together with a numbering of the points of \(B_2\).
Likewise, a point of \(\widetilde H^{\rm fail}(n_1,n_2)_{\CC}\) over \(B \in U(n_1,n_2)_{\CC}\) consists of a point of \(H^{\rm fail}(n_1,n_2)_{\CC}\) together with a numbering of the points of \(B_2\).
The action of \(\pi_1(U(n_1,n_2)_{\CC}, B)\) on the numbering is via the natural homomorphism \(\pi_1(U(n_1,n_2)_{\CC},B) \to S_{n_2}\).
We summarise the discussion above in \Cref{tab:points}.

\begin{table}
  \centering
  \begin{tabular}{l p{.8\textwidth}} \toprule
      Space & Fibre over $B=(B_1, B_2)$  
      \\ \midrule    
    \(H(n_1,n_2)\) & Tuples \((\sigma_1, \dots, \sigma_{n_1}, \tau_1, \dots, \tau_{n_2})\) where \(\sigma_i \in S_4\) is a 3-cycle and \(\tau_j \in S_4\) is a \((2,2)\)-cycle with \(\sigma_1 \dots \sigma_{n_1}\cdot \tau_1\dots  \tau_{n_2} = 1\), up to conjugation by \(S_4\).\\
    \(H^{\rm fail}(n_1,n_2)\) & Tuples as above together with the underlining of a 2-cycle in each \(\tau_j\), denoted by \((\sigma_1, \dots, \sigma_{n_1}, \hat{\tau}_1, \dots, \hat{\tau}_{n_2})\), up to conjugation by \(S_4\). \\
    \(\widetilde H(n_1,n_2)\) & Tuples \((\sigma_1, \dots, \sigma_{n_1}, \tau_1, \dots, \tau_{n_2})\) as above together with a numbering of the points of \(B_2\), up to conjugation by \(S_4\).\\
    \(\widetilde H^{\rm fail}(n_1,n_2)\) & Tuples \((\sigma_1, \dots, \sigma_{n_1}, \hat{\tau}_1, \dots, \hat{\tau}_{n_2})\) as above together with a numbering of the points of \(B_2\), up to conjugation by \(S_4\). \\
    \bottomrule
    \end{tabular}
    \caption{A description of the fibres of the Hurwitz spaces over the fixed base point \(B \in U(n_1,n_2)_{\CC}\).}
    \label{tab:points}
  \end{table}

We begin with a useful lemma.
\begin{lemma}\label{lem:color-change}
  Fix the basepoint \(B \in U(n_1,n_2)_{\CC}\) as above.
  Let \(H \subset H(n_1,n_2)_{\CC}\) be a connected component.
  There exists a point of \(H\) over $B$ whose monodromy \((\sigma_1, \dots, \sigma_{n_1}, \tau_1, \dots, \tau_{n_2})\) has \(\tau_i = \tau_j\) for all \(i,j \in \{1, \dots, n_2\}\).
\end{lemma}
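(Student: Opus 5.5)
The plan is to work entirely in the fibre over \(B\) and use only the Hurwitz moves \eqref{eqn:halftwist2} (with their inverses) and \eqref{eqn:fulltwist}. All of these come from closed loops in \(U(n_1,n_2)_{\CC}\), so they keep us inside the connected component \(H\). Start from any point of \(H\) over \(B\), with monodromy \((\sigma_1,\dots,\sigma_{n_1},\tau_1,\dots,\tau_{n_2})\). There are exactly three \((2,2)\)-cycles in \(S_4\), and together with the identity they form the normal subgroup \(V_4\). This subgroup is abelian, and conjugation by a \(3\)-cycle permutes its three non-trivial elements cyclically.

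First I would note what the half-twist \eqref{eqn:halftwist2} does. Since \(V_4\) is abelian, \(\tau_j\tau_{j+1}\tau_j^{-1}=\tau_{j+1}\). So the half-twist just swaps \(\tau_j\) and \(\tau_{j+1}\), and we may permute the \(\tau\)'s arbitrarily. The full twist \eqref{eqn:fulltwist} replaces \(\tau_1\) by \(\sigma_{n_1}\tau_1\sigma_{n_1}^{-1}\) and leaves \(\tau_2,\dots,\tau_{n_2}\) alone. It also changes \(\sigma_{n_1}\) to \((\sigma_{n_1}\tau_1)^2\sigma_{n_1}^{-1}\). I would check that this new element is again a \(3\)-cycle, which it must be since the move preserves the ramification type.

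The key step is to show that the conjugation can be applied repeatedly to any chosen \(\tau_j\). The plan is to bring \(\tau_j\) to position \(1\) by half-twists and then apply the full twist to it. This leaves the other \(\tau\)'s unchanged but changes \(\sigma_{n_1}\). Write \(\sigma=\sigma_{n_1}\), \(\tau=\tau_1\), and \(\sigma'=(\sigma\tau)^2\sigma^{-1}=\sigma\tau\sigma\tau\sigma^{-1}\). Conjugation by \(\sigma'\) acts on \(V_4\) as conjugation by \(\sigma\tau\sigma\tau\sigma^{-1}\). Conjugation by \(\tau\) is trivial on \(V_4\), so \(\sigma'\) acts on \(V_4\) exactly as \(\sigma\) does. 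Hence the permutation of \(V_4\setminus\{1\}\) induced by conjugating by \(\sigma_{n_1}\) never changes. It is a fixed \(3\)-cycle \(c\) on the three \((2,2)\)-cycles.

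By repeating the full twist on position \(1\), we can therefore replace any single \(\tau_j\) by \(c(\tau_j)\) or \(c^2(\tau_j)\) while fixing all the others. Since \(c\) acts transitively on the three \((2,2)\)-cycles, we can make every \(\tau_j\) equal to \(\tau_1\), working one entry at a time. The resulting tuple is monodromy of a point of \(H\) over \(B\), which proves the lemma. The \(\sigma\)'s change along the way, but that is allowed.

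The main obstacle I expect is the bookkeeping in the computation that \(\sigma'\) induces the same action on \(V_4\) as \(\sigma\). This amounts to using \(\tau\in V_4\) together with the product formula for \(\sigma'\) and the abelianness of \(V_4\). I would verify it carefully, including the convention that the action is by conjugation \(x\mapsto gxg^{-1}\). The trivial case \(n_2\le 1\) needs no work.
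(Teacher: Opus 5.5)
Your proposal is correct and follows the paper's proof. Full twists around the last point of \(B_1\) and the first point of \(B_2\) cycle \(\tau_1\) through all three \((2,2)\)-cycles, and half-twists within \(B_2\) (which just swap the commuting \(\tau\)'s) let you apply this to each \(\tau_j\) in turn. The only difference is that you justify the cycling structurally, via \(\sigma' \equiv \sigma \pmod{V_4}\), whereas the paper checks it with an explicit table for \(\sigma_{n_1}=(123)\), \(\tau_1=(12)(34)\); that single case suffices because \(S_4\) acts transitively on such pairs.
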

\begin{proof}
  Start with any point of \(H\) over \(B\) given by a monodromy \(\mu = (\sigma_1, \dots, \sigma_{n_1}, \tau_1, \dots, \tau_{n_2})\).
  We modify it so that all \(\tau_i\)'s become equal.

  Consider the anti-clockwise full twist in \(U(n_1,n_2)_{\CC}\) in the last point of \(B_1\) and the first point of \(B_2\).
  Acting by this loop on \(\mu\) yields a \(\mu'\) that differs from \(\mu\) only at the positions of \(\sigma_{n_1}\) and \(\tau_{1}\).
  Using \eqref{eqn:fulltwist}, we compute the new \(\sigma_{n_1}\) and \(\tau_1\) after zero, one, and two full twists, starting with \(\sigma_{n_1} = (123)\) and \(\tau_1 = (12)(34)\) (our convention for multiplying permutations is that \(p \cdot q = p \circ q\)).
  \begin{center}
    \begin{tabular}{l c c } \toprule
          & \(\sigma_{n_1}\) & \(\tau_1\) \\ \midrule
        Initial & \((123)\) & \((12)(34)\) \\
      After one twist& \((243)\) & \((14)(23)\)\\
      After two twists& \((142)\) & \((13)(24)\)\\ \bottomrule
    \end{tabular}
  \end{center}
   We see that by zero, one, or two full twists, we can change the \((2,2)\)-cycle \(\tau_1\) to any \((2,2)\)-cycle.

  Since the \((2,2)\)-cycles commute with each other, a half-twist exchanging two adjacent points of \(B_2\) simply exchanges the monodromy elements.

  So, if we have \(\tau_1 = \tau_2 = \cdots = \tau_j \neq \tau_{j+1}\), we do a sequence of half-twists to bring the \((j+1)\)-th point to the first position; do one or more full twists around the last point of \(B_1\) to make the monodromy around this point equal to \(\tau_j\),  and do the reverse sequence of half-twists to bring it back to the \((j+1)\)th position.
  By repeating this procedure, we make \(\tau_1 = \cdots = \tau_{n_2}\).  
\end{proof}

\begin{proposition}\label{prop:comp-Htilde}
  Let \(H \subset H(n_1,n_2)_{\CC}\) be a connected component.
  The pre-image of \(H\) in \(\widetilde H(n_1,n_2)_{\CC}\) is connected.
\end{proposition}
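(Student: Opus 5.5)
Since \(\widetilde H(n_1,n_2)_{\CC} \to H(n_1,n_2)_{\CC}\) is an \(S_{n_2}\)-torsor, the pre-image of \(H\) is connected if and only if the monodromy of this torsor over \(H\) is all of \(S_{n_2}\). Concretely, fix a point \(x \in H\) over the base point \(B\). The fibre of \(\widetilde H(n_1,n_2)_{\CC}\) over \(x\) is the set of numberings of the points of \(B_2\), and the loops in \(U(n_1,n_2)_{\CC}\) based at \(B\) act on it via \(\pi_1(U(n_1,n_2)_{\CC},B) \to S_{n_2}\). The pre-image of \(H\) is connected if every numbering of \(B_2\) can be reached by a path lying over a loop in \(U(n_1,n_2)_{\CC}\) that returns to the same point \(x\). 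Equivalently, we must show that the stabiliser of \(x\) in \(\pi_1(U(n_1,n_2)_{\CC},B)\) surjects onto \(S_{n_2}\).

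By \Cref{lem:color-change}, we choose \(x\) so that its monodromy \((\sigma_1,\dots,\sigma_{n_1},\tau_1,\dots,\tau_{n_2})\) has \(\tau_1=\dots=\tau_{n_2}=\tau\). Now consider the anti-clockwise half-twist exchanging the \(j\)th and \((j+1)\)th points of \(B_2\). By \eqref{eqn:halftwist2}, ignoring decorations, it sends the tuple to
\[(\dots,\tau_{j-1},\tau_j\tau_{j+1}\tau_j^{-1},\tau_j,\dots) = (\dots,\tau,\tau\tau\tau^{-1},\tau,\dots) = (\dots,\tau,\tau,\tau,\dots).\]
So this half-twist fixes the monodromy tuple exactly, and hence fixes the point \(x\). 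Its image in \(S_{n_2}\) is the adjacent transposition \((j\;j+1)\). These transpositions for \(j=1,\dots,n_2-1\) generate \(S_{n_2}\), so the stabiliser of \(x\) surjects onto \(S_{n_2}\).

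Hence all \(n_2!\) points of \(\widetilde H(n_1,n_2)_{\CC}\) over \(x\) lie in a single connected component. Every component of the pre-image of \(H\) is finite étale over \(H\) and therefore surjects onto the connected space \(H\), so each component meets the fibre over \(x\). It follows that the pre-image is connected. The only step needing care is the identification of the permutation action on numberings with the image of the half-twist in \(S_{n_2}\). This is the standard map from the braid group to the symmetric group and is immediate from the moduli description in \Cref{tab:points}. The case \(n_2\le 1\) is trivial.
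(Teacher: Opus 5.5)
Your proposal is correct and follows the paper's proof essentially step for step. You choose a point with \(\tau_1=\dots=\tau_{n_2}\) via \Cref{lem:color-change}, note that adjacent half-twists in \(B_2\) then fix the monodromy and so lift to closed loops in \(H\) acting on numberings by adjacent transpositions, and conclude that these generate \(S_{n_2}\). Your closing remark that every component of the pre-image meets the fibre over \(x\) is the same point the paper makes by appealing to path lifting.
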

\begin{proof}
  Recall that the points of \(\widetilde H(n_1,n_2)_{\CC}\) are the points of \(H(n_1,n_2)_{\CC}\) together with a numbering of the points of \(B_2\).
  Choose a point \(f \in H\) over the fixed basepoint \(B\) as above, whose monodromy is \((\sigma_1, \dots, \sigma_{n_1}, \tau_1, \dots, \tau_{n_2})\) with \(\tau_1 = \dots = \tau_{n_2}\).
  This is possible by \Cref{lem:color-change}.
  The pre-image of \(f\) in \(\widetilde H(n_1,n_2)_{\CC}\) consists of copies of $f$ 
  equipped with different numberings of the \(n_2\) points of \(B_2\).
  We will show that the action of \(\pi_1(H, f)\) on the points of the fibre of \(\widetilde H(n_1,n_2)_{\CC}\) over \(f\) is transitive.
  By the path lifting property for covering spaces, this will imply path-connectedness, and hence connectedness, of the pre-image of $H$.

  Consider the half-twist that interchanges two adjacent points of \(B_2\).
  Since the monodromy around all points of \(B_2\) is the same, this half-twist does not change the monodromy, and thus lifts to a loop in \(H\).
  The action of this loop on the pre-image of $f$ simply exchanges the numbers associated with the two interchanged points of \(B_2\).
  By a sequence of pairwise interchanges, we may change a given numbering of the points of \(B_2\) to any other numbering.
\end{proof}

\begin{proposition}\label{prop:comp-Hfailtilde}
  Let \(H \subset H(n_1,n_2)_{\CC}\) be a connected component.
  The pre-image of \(H\) in \(\widetilde H^{\rm fail}(n_1,n_2)_{\CC}\) has at most two connected components.
   Furthermore, the action of \(S_{n_2}\) on this preimage preserves the connected components. 
 \end{proposition}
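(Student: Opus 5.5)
The plan is to work over the base point \(B\) and study the monodromy action of \(\pi_1(H,f)\) on the fibre of \(\widetilde H^{\rm fail}(n_1,n_2)_{\CC}\) over a suitable point \(f\in H\). As in the proof of \Cref{prop:comp-Htilde}, I would use \Cref{lem:color-change} to choose \(f\) over \(B\) with monodromy \((\sigma_1,\dots,\sigma_{n_1},\tau_1,\dots,\tau_{n_2})\) satisfying \(\tau_1=\dots=\tau_{n_2}=\tau\). The fibre over \(f\) consists of pairs (numbering of \(B_2\), decoration \(\hat\tau_1,\dots,\hat\tau_{n_2}\)), so it has \(n_2!\,2^{n_2}\) elements. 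By path lifting, the components of the pre-image of \(H\) correspond to the orbits of \(\pi_1(H,f)\) on this fibre. It therefore suffices to show that the loops in \(H\) based at \(f\) realise two kinds of moves: every change of numbering with the decorations left fixed, and every simultaneous flip of the decorations at two points of \(B_2\). Together these leave at most two orbits, distinguished by the parity of the number of flipped decorations.

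\emph{Step 1 (numberings).} Consider the half-twist exchanging adjacent points \(j,j+1\) of \(B_2\). Since \(\tau_j=\tau_{j+1}=\tau\), formula \eqref{eqn:halftwist2} gives \(\tau\hat\tau_{j+1}\tau^{-1}\). Conjugating by \(\tau=(pq)(rs)\) fixes both \((pq)\) and \((rs)\), so the underlining is preserved. Hence this half-twist lifts to a loop in \(H\) that swaps the numbers and decorations attached to the two points, exactly as in \Cref{prop:comp-Htilde}. Composing such loops realises any permutation of the numbering carrying the decorations along, which is precisely the action of \(S_{n_2}\).

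\emph{Step 2 (pair flips).} I would construct a loop that flips the decorations at points \(1\) and \(2\) of \(B_2\) and changes nothing else. First apply \(k\in\{1,2\}\) full twists \eqref{eqn:fulltwist} around the last point of \(B_1\) and the first point of \(B_2\), so that \(\tau_1\) becomes a \((2,2)\)-cycle \(\tau'\ne\tau\). The decoration on \(\hat\tau_1\) is transported by conjugation in some definite way. Next apply a full twist of points \(1,2\) of \(B_2\). By \eqref{eqn:halftwist2} applied twice, each of \(\hat\tau_1,\hat\tau_2\) is conjugated by the other \((2,2)\)-cycle. Conjugating \((pq)(rs)\) by a different \((2,2)\)-cycle swaps \((pq)\) and \((rs)\), for example \((13)(24)\) sends \((12)\) to \((34)\). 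So both underlinings flip while the undecorated cycles are unchanged. Finally undo the \(k\) full twists with the inverse loop.

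The undecorated path is then a loop at \(f\), since the middle move acts trivially on undecorated monodromy. The net effect on decorations is to flip those at points \(1\) and \(2\), because the outer conjugations cancel. This cancellation of the outer conjugations is the step I expect to need the most care, and I would verify it explicitly with the table in \Cref{lem:color-change}. Conjugating by Step 1 moves then produces flips at any pair of points, and these flips generate all even changes of decoration. Combined with Step 1, this gives at most two orbits.

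\emph{Step 3 (the \(S_{n_2}\)-action).} The \(S_{n_2}\)-action renumbers the points of \(B_2\) and commutes with monodromy. By Step 1, every such renumbering of a point of the fibre over \(f\) is already realised by a loop in \(H\). So each \(S_{n_2}\)-translate of a point lies in the same \(\pi_1\)-orbit, hence in the same connected component. Since the \(S_{n_2}\)-action is compatible with the covering maps, the same holds over every point of the pre-image, and the action preserves the components.
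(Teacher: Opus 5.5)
\textbf{Gap: the half-twist loops of Step 1 do not realise the \(S_{n_2}\)-action, so Step 3 does not prove the second assertion.}

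Your proof of the first assertion is sound. It is essentially the paper's argument: the paper also flips underlinings in pairs by taking one point of \(B_2\) in a full twist around another carrying a different \((2,2)\)-cycle. The only difference is that the paper first moves, inside the connected space \(\widetilde H\), to a base point with \(\tau_1\neq\tau_2=\dots=\tau_{n_2}\). Your loop \(\gamma\delta\gamma^{-1}\) is that same loop transported back to \(f\). The outer conjugations do cancel, because transport along \(\gamma\) acts on the two \(2\)-cycles of \(\tau_1\) by conjugation by a fixed element of \(S_4\), and this commutes with swapping them.

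The problem is the sentence ``which is precisely the action of \(S_{n_2}\)''. The \(S_{n_2}\)-action is \(\nu\mapsto\nu\circ\alpha^{-1}\) with \(\alpha(\pm1,i)=(\pm1,\sigma(i))\). It renumbers the points of \(B_2\) but leaves the underlining at each point of \(B_2\) unchanged. The half-twist loop instead carries the underlining along with the number, since conjugation by \(\tau\) preserves each \(2\)-cycle of \(\tau\). The two agree only when \(\hat\tau_j=\hat\tau_{j+1}\).

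For example, take \(n_2=2\), \(\tau=(pq)(rs)\), left-to-right numbering, and decorations \(\underline{(pq)}(rs)\), \((pq)\underline{(rs)}\).
\begin{itemize}
\item The half-twist gives numbering \((2,1)\) with decorations \((pq)\underline{(rs)}\), \(\underline{(pq)}(rs)\).
\item The transposition in \(S_2\) gives numbering \((2,1)\) with the original decorations.
\end{itemize}
So ``By Step 1, every such renumbering of a point of the fibre over \(f\) is already realised by a loop in \(H\)'' is false whenever the underlinings are not all equal. Step 3 as written does not prove the second assertion.

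The same mismatch affects your overview, since Step 1 does not give ``changes of numbering with the decorations left fixed''. That part is harmless: simultaneous permutations of positions preserve the number of positions with \((pq)\) underlined, so your parity invariant and the bound of two orbits survive.

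The repair is short, and either of two routes works.
\begin{itemize}
\item \emph{Combine Steps 1 and 2.} Given a fibre point and \(\pi\in S_{n_2}\), a composite of Step-1 half-twists produces the \(\pi\)-renumbered point but with underlinings permuted among positions. Both underlining assignments have the same number of \((pq)\)'s, so they differ at an even number of positions. Your pair flips from Step 2 correct this, so the \(\pi\)-translate lies in the same \(\pi_1(H,f)\)-orbit.
\item \emph{Follow the paper.} Verify the claim for a single fibre point with the same \(2\)-cycle underlined at every point of \(B_2\), where your Step 1 is correct. Then use that each element of \(S_{n_2}\) permutes the at most two components over \(H\), so fixing one forces it to fix the other.
\end{itemize}
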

\begin{proof}
  Let \(\widetilde H\) be the pre-image of \(H\) in \(\widetilde H(n_1,n_2)_{\CC}\).
  By \Cref{prop:comp-Htilde}, \(\widetilde H\) is connected.
  Choose a point \(\widetilde f \in \widetilde{H}\) given by the monodromy \((\sigma_1,\dots, \sigma_{n_1}, \tau_1, \dots, \tau_{n_2})\) with the left-to-right numbering of the points of \(B_2\). 
  The fibre of \(\widetilde f\) in \(\widetilde H^{\rm fail}(n_1,n_2)_{\CC}\) consists of the \(2^{n_2}\) possible decorations of \(\tau_1, \dots, \tau_{n_2}\).
  We show that the action of \(\pi_1(\widetilde H, \widetilde f)\) on this set has at most two orbits.

  By an argument similar to the proof of \Cref{lem:color-change}, we may assume that \(\tau_2 = \dots = \tau_{n_2}\) and \(\tau_1 \neq \tau_2\).
  Without loss of generality, take \(\tau_1 = (12)(34)\) and \(\tau_2 = \dots = \tau_{n_2} = (13)(24)\).
  Consider a decoration \((\hat{\tau}_1, \dots, \hat{\tau}_{n_2})\) of \((\tau_1, \dots, \tau_{n_2})\).
Given \(j \in \{2, \dots, n_2\}\), we consider the loop in \(U(n_1,n_2)_{\CC}\) that moves point \(j\) up, then left towards point \(1\) of $B_2$, takes it in a full twist around point \(1\), and then takes it back to its original position along the same path (see below): 
\[
  \begin{tikzpicture}[scale=0.5]
    \draw[fill] (-2,0) circle (0.1)  (-1,0) circle (0.1) (0,0) node {\(\cdots\)} node[below=0.2cm] {\(B_2\)} (1,0) circle (0.1) (2,0) node {\(\cdots\)} (3,0) circle (0.1);
    \draw[dashed] (1,0) -- (1,1) -- (-2,1);
    \draw [dashed, postaction={decorate}, decoration={markings, mark=at position 0.5 with \arrow{To}}]
    plot[smooth] coordinates {(-2,1) (-2.4,0) (-2,-0.4) (-1.6,0) (-2,1)};
  \end{tikzpicture}
\]
This loop has no effect on the monodromy or on the numbering of \(B_2\).
So it lifts to a closed loop in \(\widetilde H\) based at \(\widetilde f\).
Let us analyse its effect on the decorated monodromy.
It is easy to see that the only possible change is in \(\hat{\tau}_1\) and \(\hat{\tau}_j\).
In particular, we see that \(\hat{\tau}_j\) changes to the conjugate \(\tau_1 \hat{\tau}_j \tau_1^{-1}\).
We have
\[ (12)(34) \cdot {(13)}\underline{(24)} \cdot (12)(34) = \underline{(13)}{(24)};\]
that is, the conjugate \(\tau_1 \hat{\tau}_j \tau_1^{-1}\) is the same \((2,2)\)-cycle as \(\hat{\tau}_j\) but with the other underlining.
By a sequence of such loops, we can ensure that the decorated monodromy around the points \(2, \dots, n_2\) is \(\underline{(13)}{(24)}\).
We conclude that under the action of \(\pi_1(\widetilde H, \widetilde f)\), any decoration \((\hat{\tau}_1, \dots, \hat{\tau}_{n_2})\) lies in the orbit of 
\[ (\underline{(12)}{(34)}, \underline{(13)}{(24)}, \dots, \underline{(13)}{(24)}) \text{ or } ((12)\underline{(34)}, \underline{(13)}{(24)}, \dots, \underline{(13)}{(24)}).\]
It follows that the pre-image of \(\widetilde H\) has at most two components.

We now prove that \(S_{n_2}\) preserves the connected components.
Let \(f \in H(n_1,n_2)_{\CC}\).
It suffices to prove that for all \(\hat{f}\) in \(\widetilde H^{\rm fail}(n_1,n_2)_{\CC}\) over \(f\) and \(p \in S_{n_2}\), the points \(\hat{f}\) and \(p \cdot \hat{f}\) are in the same connected component of \(\widetilde H^{\rm fail}(n_1,n_2)_{\CC}\).
Since \(\widetilde H^{\rm fail}(n_1,n_2)_{\CC}\) has at most two connected components over the connected component of \(H(n_1,n_2)_{\CC}\) containing \(f\), it suffices to prove this for any one \(\hat{f}\).
We choose \(f\) branched over \(B_1\cup B_2\) with the monodromy \((\sigma_1, \dots, \sigma_{n_1}, \tau_1, \dots, \tau_{n_2})\) satisfying \(\tau_i = \tau_j\) for all \(i,j\) (see \Cref{lem:color-change}).
We then choose \(\hat{f}\) to have the left-to-right numbering of the points of \(B_2\) and the same \(2\)-cycle underlined in all the \(\tau_i\).
Let \(p \in S_{n_2}\) be the transposition \((j,j+1)\).
A half-twist in the \(j\)-th and \((j+1)\)-th points of \(B_2\) lifts to a path in \(\widetilde H^{\rm fail}(n_1,n_2)_{\CC}\) connecting \(\hat{f}\) and \(p \cdot \hat{f}\). 
We conclude that \(\hat{f}\) and \(p \cdot \hat{f}\) lie in the same connected component.
Since the transpositions \((j,j+1)\) generate \(S_{n_2}\), the same follows for any \(p \in S_{n_2}\).
\end{proof}

\begin{proposition}\label{prop:comp-hfail}
  Let \(H \subset H(n_1,n_2)_{\CC}\) be a connected component.
  The map \(\widetilde H^{\rm fail}(n_1,n_2)_{\CC} \to H^{\rm fail}(n_1,n_2)_{\CC}\) is a bijection on the connected components over \(H\).
  In particular, \(H^{\rm fail}(n_1,n_2)_{\CC}\) also has at most two connected components over \(H\).
\end{proposition}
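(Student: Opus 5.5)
The plan is to use the fact that \(\widetilde H^{\rm fail}(n_1,n_2)_{\CC} \to H^{\rm fail}(n_1,n_2)_{\CC}\) is an \(S_{n_2}\)-torsor, hence the quotient map by the \(S_{n_2}\)-action. Let \(\widetilde H^{\rm fail}_H\) and \(H^{\rm fail}_H\) denote the pre-images of \(H\) in the two spaces. Since the square in \eqref{eqn:fourspaces-degrees} is a pull-back, \(\widetilde H^{\rm fail}_H\) is exactly the pre-image of \(H^{\rm fail}_H\). The map \(\widetilde H^{\rm fail}_H \to H^{\rm fail}_H\) is finite étale and surjective, hence open, closed and continuous. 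Therefore it induces a surjection on connected components: the image of any connected component is connected, and it is open and closed in \(H^{\rm fail}_H\) because finite étale maps are open and closed, so it is a whole component.

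It remains to show injectivity on components. Suppose two components \(Z_1, Z_2\) of \(\widetilde H^{\rm fail}_H\) map into the same component \(W\) of \(H^{\rm fail}_H\). Choose a point \(w \in W\) and lifts \(z_1 \in Z_1\), \(z_2\) over \(w\). Since \(Z_1\) maps onto \(W\), some point of \(Z_2\) has the same image as some point of \(Z_1\). Concretely, pick \(z_2 \in Z_2\) and let \(w\) be its image. Surjectivity of \(Z_1 \to W\) gives \(z_1 \in Z_1\) with the same image \(w\).

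Because the map is an \(S_{n_2}\)-torsor, the fibre over \(w\) is a single \(S_{n_2}\)-orbit, so \(z_2 = p \cdot z_1\) for some \(p \in S_{n_2}\). By the second assertion of \Cref{prop:comp-Hfailtilde}, the \(S_{n_2}\)-action preserves each connected component of \(\widetilde H^{\rm fail}_H\). Hence \(p\cdot z_1 \in Z_1\), so \(Z_1 = Z_2\). This gives the bijection.

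The final claim then follows immediately from the first assertion of \Cref{prop:comp-Hfailtilde}, which says that \(\widetilde H^{\rm fail}_H\) has at most two connected components.

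The only delicate point is the topological one: that a finite étale (covering) map sends components onto components. This holds because the image of a component is both open and closed and connected. Everything else is formal given \Cref{prop:comp-Hfailtilde}. One could equally phrase the argument via path lifting: a path in \(W\) from \(w\) to itself lifts to a path in \(\widetilde H^{\rm fail}_H\) starting at \(z_1\) and ending in the fibre, and any two points of the fibre differ by an element of \(S_{n_2}\), which stays in the same component.
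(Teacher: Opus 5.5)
Your proposal is correct and follows the paper's proof. The paper also just notes that \(\widetilde H^{\rm fail}(n_1,n_2)_{\CC} \to H^{\rm fail}(n_1,n_2)_{\CC}\) is the quotient by \(S_{n_2}\) and invokes both assertions of \Cref{prop:comp-Hfailtilde}; you have simply written out the routine topological details (surjectivity and injectivity on components) that the paper leaves implicit.
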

\begin{proof}
  Recall that \(\widetilde H^{\rm fail}(n_1,n_2)_{\CC} \to H^{\rm fail}(n_1,n_2)_{\CC}\) is the quotient by \(S_{n_2}\).
  The statement follows from \Cref{prop:comp-Hfailtilde}.
\end{proof}

Having discussed the connected components of \(\widetilde H(n_1,n_2)\), \(\widetilde H^{\rm fail}(n_1,n_2)\) and \(H^{\rm fail}(n_1,n_2)\) over a connected component of \(H(n_1,n_2)\), we now address the connected components of \(H(n_1,n_2)\).
Let \(v(n_1,n_2)\) be the number of connected components of \(H(n_1,n_2)_{\CC}\).
Set \(n = n_1+n_2\) and let \(U \subset \PP^n\) be the complement of the discriminant divisor.
Recall that we have a finite \'etale morphism \(H(n_1,n_2) \to U\).
Consider the point \(B \in U(n_1,n_2)_{\CC}\) also as a point of \(U_{\CC}\).
Let \(V(n_1,n_2)\) be the set
\begin{equation}\label{def:V} 
  V(n_1,n_2) = \left\{(g_1, \dots, g_n) \Biggm|
    \begin{split}
      &g_i \in S_4, \quad g_1 \cdots g_n = 1,\\
      &\text{\(n_1\) of the \(g_i\) have cycle type \((3,1)\),} \\
      &\text{\(n_2\) of the \(g_i\) have cycle type \((2,2)\).}
    \end{split}
    \right\}.
  \end{equation}  
  The group \(S_4\) acts on \(V(n_1,n_2)\) by simultaneous conjugation
\[ h \colon (g_1, \dots, g_n) \mapsto (hg_1h^{-1}, \dots,  hg_nh^{-1}). \] 
 
   The points of \(H(n_1,n_2)_{\CC}\) over \(B \in U_{\CC}\) are in natural bijection with \(V(n_1,n_2)\) modulo the conjugation action of \(S_4\).

   The fundamental group \(\pi_1(U_{\CC},B)\) is the braid group \(\mathbb{B}_n\).
The \(i\)th generator \(s_i\) represents the anti-clockwise half-twist that interchanges the \(i\)th and \((i+1)\)th points of \(B\).
The group
\(\mathbb{B}_n\) acts on \(V(n_1,n_2)\) by 
\[s_i \colon (g_1, \dots, g_n) \mapsto (g_1, \dots,  g_{i-1}, g_{i}g_{i+1}g_{i}^{-1}, g_{i}, g_{i+2},\dots, g_n).\]
The connected components of \(H(n_1,n_2)_{\CC}\) are in natural bijection with the orbits of the \(\mathbb{B}_n \times S_4\) action on \(V(n_1,n_2)\) \cite[Section 1.3]{fri.vol:91}. In particular, \(v(n_1,n_2)\) is the number of orbits.
This is bounded above by the number of {\(\mathbb{B}_n\)}-orbits.
We analyse the latter using the lifting invariant and the Conway--Parker theorem, which we first recall from \cite{woo:21}.

Let \(G\) be a finite group and let \(c \subset G\) be a subset that generates \(G\), does not contain the identity, and is closed under conjugation by \(G\).
Let \(U(G,c)\) be the group with generators \([g]\) for every \(g \in c\) and relations
\[ [x][y][x]^{-1} = [xyx^{-1}],\]
for every \(x,y \in c\). 
We have a homomorphism \(U(G,c) \to G\) that sends the generator \([g]\) to \(g\).
Let \(D\) be the set of conjugacy classes in \(c\).
We have a homomorphism \(U(G,c) \to \ZZ^D\) that sends the generator \([g]\) to the conjugacy class of \(g\).
We also have a homomorphism \(\ZZ^D \to G^{\rm ab}\) that sends the conjugacy class of \(g\) to the image of \(g\) in \(G^{\rm ab}\).
The homomorphisms above combine to give a homomorphism
\[ U(G,c) \to G \times_{G^{\rm ab}} \ZZ^D.\]

Given a positive integer \(n\), set
\[ V_n^G = \{(g_1, \dots, g_n) \mid g_i \in c,\ g_1\dots g_n=1 \text{ and } g_1,\dots , g_n \text{ generate } G\}.\]
We have a map \[\Pi \colon V_n^G \to \ker(U(G,c)\to G)\] that sends \((g_1,\dots, g_n)\) to \([g_1]\cdots[g_n]\).
This map is constant on {\(\mathbb{B}_n\)} orbits and hence descends to a map \(V_n^G/{\mathbb{B}_n} \to \ker(U(G,c)\to G)\).
By composition, we have a map \(V_n^G/{\mathbb{B}_n} \to \ZZ^D\).
Let \( V_{\underline{m}}\subset \bigcup_{n\geq 0}V_n^G/{\mathbb{B}_n} \) and \(U(G,c)_{\underline{m}}\subset U(G,c)\) be the pre-images of \(\underline{m}\in\ZZ^D\). 
\begin{theorem}[Conway--Parker]\label{thm:CP}
  There exists a constant $M\in\ZZ_{>0}$ such that if all entries of $\underline{m}\in\ZZ^D$ are at least $M$, the map \(\Pi\) induces a bijection
  \[ V_{\underline{m}} \to  U(G,c)_{\underline{m}}\cap \ker(U(G,c)\to G).\]
\end{theorem}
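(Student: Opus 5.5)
This is the Conway--Parker theorem as formulated in \cite{woo:21}, and I would not reprove it from scratch; instead I would cite it and, for completeness, sketch the standard argument. The plan has three steps.

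\textbf{Step 1: well-definedness and injectivity-friendly stabilisation.} First I would check that \(\Pi\) is \(\mathbb{B}_n\)-invariant. The braid move \(s_i\) replaces \([g_i][g_{i+1}]\) by \([g_ig_{i+1}g_i^{-1}][g_i]\), and these are equal in \(U(G,c)\) by the defining relation. This also shows that the class in \(\ZZ^D\) of an orbit is its multiset of conjugacy classes.

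Next I would introduce the stabilisation operation. For a fixed \(x\in c\) of order \(r\), concatenate a tuple with \((x,\dots,x)\) (\(r\) times). This is compatible with \(\Pi\): it multiplies the image by \([x]^r\), and \([x]^r\) is central in \(U(G,c)\), since conjugation by \([x]^r\) acts on generators via \(x^r=1\).

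\textbf{Step 2: surjectivity.} Given \(u\in U(G,c)_{\underline m}\cap\ker\), I would write \(u\) as a word in the generators. The key input is that \([g]^{-1}\) can be replaced by a positive word modulo the central elements \([g]^{\mathrm{ord}(g)}\). Once every entry of \(\underline m\) is large, one can then arrange a positive word with the prescribed multiplicities that also generates \(G\), by including one copy of each generating element and absorbing the extra factors. This produces an element of \(V_{\underline m}\) mapping to \(u\).

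\textbf{Step 3: injectivity, the main obstacle.} This is the genuine content of the Conway--Parker/Fried--V\"olklein argument. Given a tuple with many occurrences of every class, I would use braid moves to bring it to a normal form: first a block containing a fixed generating tuple, followed by sorted blocks of the form \((x,\dots,x)\). The point is that in the presence of a generating block, any element \(g\in c\) can be conjugated by an arbitrary element of \(G\) via braids, so elements of the same class become interchangeable. Two tuples with the same image under \(\Pi\) then differ only in their normal-form block, and this block is determined by the image in \(U(G,c)\) after cancelling central powers.

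The constant \(M\) is needed exactly so that every class has enough spare copies to run this absorption. For the details of this last step I would follow \cite{woo:21} (and Fried--V\"olklein) rather than redo them.
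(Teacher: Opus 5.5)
Your proposal matches the paper's approach: the paper gives no independent argument and simply cites \cite[Theorem~12.5]{liu.woo.zur:24} and \cite[Theorem~3.1]{woo:21} together with its proof, which is the source you rely on for the injectivity step. Your sketch (braid-invariance of \(\Pi\), centrality of \([x]^{\operatorname{ord}(x)}\), and the Fried--V\"olklein normal-form argument) is a reasonable outline of that cited proof, and nothing further is needed.
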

\begin{proof}
See~\cite[Theorem~12.5]{liu.woo.zur:24},~\cite[Theorem~3.1]{woo:21} and its proof.
\end{proof}

We take \(G = A_4\) and \(c = A_4- \{\id\}\).
Then \(c\) is the union of 3 conjugacy classes, represented by \((123)\), \((132)\), and \((12)(34)\).
The commutator subgroup of \(G\) is the Klein 4-group \(\{\id, (12)(34), (13)(24), (14)(23)\}\) and \(G^{\rm ab} \cong C_3\).
Choosing the image of \((123)\) as a generator fixes an isomorphism \(G^{\rm ab} \to C_3 = \ZZ/3\ZZ\).
Then the map \(\ZZ^D = \ZZ^3 \to C_3\) sends the basis vector \(e_1\) to \(1\), \(e_2\) to \(-1\), and \(e_3\) to \(0\).
Write \(U(A_4) = U(A_4,c)\). 
\begin{proposition}\label{prop:UGc}
  The natural homomorphism \(U(A_4) \to A_4 \times_{C_3} \ZZ^3\) is an isomorphism.
\end{proposition}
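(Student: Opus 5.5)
We will show that the homomorphism \(\Psi\colon U(A_4) \to A_4 \times_{C_3} \ZZ^3\), \([g]\mapsto (g, \text{class of } g)\), is both surjective and injective.

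\textbf{Surjectivity.} The target \(A_4\times_{C_3}\ZZ^3\) is generated by the elements \((g,e_{D(g)})\) together with \((\id, v)\) for \(v\) in the kernel of \(\ZZ^3\to C_3\). That kernel is spanned by \(e_3\), \(e_1+e_2\) and \(3e_1\). These are hit by
\[ [(12)(34)]^2,\qquad [(123)][(132)],\qquad [(123)]^3 \]
respectively, up to correction by generators mapping into \(V_4\). For example, \([(12)(34)]\) maps to \(((12)(34),e_3)\), and elements \((v,0)\) with \(v\in V_4\) are obtained as \([(12)(34)][(13)(24)][(14)(23)]^{-1}\)-type combinations. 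It is easiest to say that the image contains all \((g,e_{D(g)})\), and that \(A_4\times_{C_3}\ZZ^3\) is generated by these together with \((\id,e_1+e_2)\) and \((\id,3e_1)\) and \((\id,2e_3)\), all of which lie in the image. I will check this generation claim directly.

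\textbf{Injectivity.} This is the main step. Let \(K=\ker\Psi\); we must show \(K=1\). The kernel of \(U(A_4)\to\ZZ^3\) consists of words of total degree zero in each class. First, the relation \([x][y][x]^{-1}=[xyx^{-1}]\) shows that for each \(g\) the element \([g]^{\operatorname{ord}(g)}\) is central. Indeed, conjugation by \([y]\) sends \([g]^k\) to \([ygy^{-1}]^k\), and \(g^k=1\) gives \([y g y^{-1}]=[g]\)\ldots{} but more carefully I will show that \([g]^{\operatorname{ord} g}\) depends only on the conjugacy class of \(g\) and is central. This is a standard feature of these groups; the analogous statement appears in Wood's treatment.

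Next, I will use that \(U(G,c)\) is a central extension of a group closely related to \(G\). The standard fact (Fried–Völklein, Wood) is that \(\ker(U(G,c)\to G\times_{G^{\rm ab}}\ZZ^D)\) is a quotient of the Schur multiplier \(H_2(G,\ZZ)\), namely \(H_2(G,c)\), which equals \(H_2(G)\) modulo the images of \(H_2\) of the commuting pairs in \(c\). For \(A_4\), \(H_2(A_4,\ZZ)=\ZZ/2\), coming from the Sylow \(2\)-subgroup \(V_4\). It is killed by the commuting pair \(x=(12)(34)\), \(y=(13)(24)\). Since these commute, the relations give \([x][y][x]^{-1}=[y]\), so \([x]\) and \([y]\) commute in \(U(A_4)\). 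The corresponding class of \(x\wedge y\in H_2(V_4)=\ZZ/2\) therefore vanishes in the kernel, and it generates \(H_2(A_4)\) because restriction to \(V_4\) is injective on \(2\)-parts. Hence the kernel is trivial.

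\textbf{Fallback.} If citing this framework is undesirable, I would instead argue directly. Using the relations, any kernel element can be written as a word \([v_1]\cdots[v_k]\,\cdot\) (powers of \([(123)]\), \([(132)]\)) with \(v_i\in V_4\). I would first show that \(N\), the subgroup generated by \([v]\) for \(v\in V_4\setminus\{1\}\), is abelian, since these commute, so \(N\cong\ZZ^3\). I would then show \(N\) is normal, with \([(123)]\) acting by cyclically permuting the generators. This yields \(U(A_4)\cong\) a quotient of \(\ZZ^3\rtimes\ZZ\times\ldots\), whose structure can be matched against \(A_4\times_{C_3}\ZZ^3\).

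I expect this bookkeeping, in particular handling the relations among the \(3\)-cycle generators (for instance \([(123)][(134)][(123)]^{-1}=[(243)]\)) to reduce everything to \([(123)]\) times \(N\), to be the main obstacle.
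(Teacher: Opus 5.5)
Your main injectivity argument is essentially the paper's. Both cite Wood's theorem $U(G,c)\cong S_c\times_{G^{\rm ab}}\ZZ^D$, which also gives surjectivity, and both show the reduced Schur multiplier of $A_4$ vanishes because the commuting pair $(12)(34),(13)(24)$ kills $H_2(A_4,\ZZ)\cong\ZZ/2$. The paper sees this via the Schur cover $\SL_2(\FF_3)$, where these two elements lift to anticommuting elements of $Q_8$. You use the Sylow subgroup instead; the precise statement you need there is that corestriction $H_2(V_4,\ZZ)\to H_2(A_4,\ZZ)$ is surjective, rather than ``restriction is injective on $2$-parts''.

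Drop the fallback: $N$ is not $\ZZ^3$. Since $[g]^{\operatorname{ord} g}$ is central and constant on conjugacy classes, $[x]^2=[y]^2=[xy]^2$, so $N\cong\ZZ\times(\ZZ/2)^2$.
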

\begin{proof}
 In~\cite[Theorem~2.5]{woo:21}, Wood shows that $U(A_4)$ is isomorphic to $S_c\times_{C_3}\ZZ^3$, where $S_c$ is the reduced Schur cover defined before~\cite[Lemma~2.2]{woo:21}. As in the proof of~\cite[Proposition~8.11]{ell.tra.wes:}, viewing $A_4$ as $\PSL_2(\FF_3)$, one sees that a Schur cover for $A_4$ is $\SL_2(\FF_3)$ and deduces that $S_c=A_4$. The proof of~\cite[Theorem~2.5]{woo:21} shows that the natural homomorphism \(U(A_4) \to A_4 \times_{C_3} \ZZ^3\) is an isomorphism. 
\end{proof}
Observe that a \(3\)-cycle and a \((2,2)\)-cycle are enough to generate \(A_4\).
  So, as long as \(n_1, n_2 > 0\), for every \((g_1,\dots, g_n) \in V(n_1,n_2)\), the \(g_i\) generate \(A_4\). 
  That is, we have the inclusion \(V(n_1,n_2) \subset V^{A_4}_n\).
Given non-negative integers \(a,b\) with \(a+b = n_1\), let \(V(a,b,n_2) \subset V(n_1,n_2)\) be the pre-image of \((a,b,n_2) \in \ZZ^3\) under the composition $V_n^{A_4}\xrightarrow{\Pi} U(A_4)\twoheadrightarrow \ZZ^3$.
Then \(V(n_1,n_2)\) is the disjoint union
\[ V(n_1,n_2) = \bigsqcup_{a+b=n_1} V(a,b,n_2).\]
The \({\mathbb{B}_n}\)-action on \(V(n_1,n_2)\) preserves \(V(a,b,n_2)\).
The \(S_4\)-action preserves the union
\begin{equation}\label{def:Vab}
  V(\{a,b\}, n_2) = V(a,b,n_2) \cup V(b,a,n_2).
\end{equation}
The even permutations preserve the two sets individually, but the odd permutations interchange them.

\begin{proposition}\label{prop:cpa4}
  \begin{enumerate}
  \item There exists $\lambda\in\mathbb{Z}_{>0}$ such that for all \(a,b,n_2,n \in \ZZ_{\geq 0}\) with $a+b+n_2=n$, 
    \[ |V(a,b,n_2)/{\mathbb{B}_n}| \leq \lambda.\]
   \item There exists $M\in\mathbb{Z}_{>0}$ such that for all \(a,b,n_2, n \in \ZZ_{\geq 0}\) with \(a,b,n_2 \geq M\) and $a+b+n_2=n$,
    \[ |V(a,b,n_2)/{\mathbb{B}_n}| = \begin{cases} 1 & \text{ if } a \equiv b \pmod 3 \\
    0 &\text{ otherwise.}
  \end{cases}\]
  \end{enumerate}
\end{proposition}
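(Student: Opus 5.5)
The plan is to derive both parts from the Conway--Parker theorem (\Cref{thm:CP}) together with the explicit description of $U(A_4)$ in \Cref{prop:UGc}. Throughout, $V(a,b,n_2)\subset V^{A_4}_n$ is the set of tuples whose image under $V_n^{A_4}\xrightarrow{\Pi}U(A_4)\to\ZZ^3$ is $\underline m=(a,b,n_2)$. Since $\Pi$ is constant on $\mathbb{B}_n$-orbits, $V(a,b,n_2)/\mathbb{B}_n$ is exactly the set $V_{\underline m}$ of \Cref{thm:CP}.

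For part (2), I would take $M$ to be the constant of \Cref{thm:CP}. Then $|V(a,b,n_2)/\mathbb{B}_n| = |U(A_4)_{\underline m}\cap\ker(U(A_4)\to A_4)|$. By \Cref{prop:UGc}, $U(A_4)\cong A_4\times_{C_3}\ZZ^3$, and the map to $A_4$ is the first projection. Hence the kernel intersected with the fibre over $\underline m$ consists of the pairs $(1,\underline m)$ that lie in the fibre product. Such a pair exists, and is then unique, exactly when $\underline m$ maps to $0\in C_3$. Under the map $e_1\mapsto 1$, $e_2\mapsto -1$, $e_3\mapsto 0$, this condition reads $a-b\equiv 0\pmod 3$. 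This gives the claimed count of $1$ or $0$.

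For part (1), the issue is the region where some coordinate of $\underline m$ is below $M$, so \Cref{thm:CP} does not apply directly. I would bound $|V(a,b,n_2)/\mathbb{B}_n|$ uniformly by a stabilisation argument.

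First, braid moves can sort each tuple into a normal form: for each conjugacy class, collect its entries into consecutive blocks. Within a class of $A_4$, the standard "factorisation" arguments (as in Ellenberg--Venkatesh--Westerland and Wood) then put the tuple into the form
\[(\text{bounded-length prefix}) \cdot (\text{long stretch of repeated } g),\]
where for each class $g$ runs over a bounded set.

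Next, use the monoid map $V_{\underline m}\to V_{\underline m+\underline m'}$ given by concatenating with a fixed tuple whose product is $1$. For a class $c_i$ of elements of order $k$, this is $(x,\dots,x)$ with $k$ copies. Once an entry is large, this map is surjective on braid orbits, because any orbit contains a representative with $k$ consecutive copies of some element, and that element can be moved to a fixed one by conjugating with the rest of the tuple. It follows that the orbit counts are non-increasing once each coordinate exceeds a threshold, and they are eventually bounded by the Conway--Parker value of $1$.

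That leaves finitely many small values for each coordinate that sits below $M$. For those, I would fix the small coordinates and let the others grow, applying the same stabilisation argument in the remaining coordinates. In the extreme case where all coordinates are small, the number of tuples is at most $12^n$ with $n$ bounded, which is finite.

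Taking $\lambda$ to be the maximum over these finitely many regimes gives (1). The main obstacle is making this uniform stabilisation rigorous when one or two coordinates are held small. I expect to handle it by citing the "component stabilisation" results of \cite{liu.woo.zur:24} (or \cite[Theorem~3.1]{woo:21} and its proof), which show that the number of components is bounded independently of the large coordinates.
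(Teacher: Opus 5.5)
\textbf{Part (2).} This is the paper's argument: take $M$ from \Cref{thm:CP}, identify $V(a,b,n_2)/\mathbb{B}_n$ with $V_{(a,b,n_2)}$ (a $3$-cycle and a $(2,2)$-cycle already generate $A_4$), and read off $U(A_4)_{\underline m}\cap\ker(U(A_4)\to A_4)$ from \Cref{prop:UGc}.

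\textbf{Part (1).} The paper gives no argument of its own and simply cites \cite[Lemma~3.3]{ell.ven:05}, whose proof bounds the orbits of fixed multidegree by counting normal forms. Your stabilisation argument is a valid self-contained alternative. It rests on the same mechanism: a block $x,\dots,x$ of length $k=\operatorname{ord}(x)$ has product $1$, so it slides freely through the tuple and can be conjugated by any other entry. Two points need tightening.

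First, normalising the block to a fixed $y,\dots,y$ requires the \emph{remaining} entries to generate $A_4$. This is needed both for the conjugation to be transitive on the class and for the truncated tuple to lie in $V_{\underline m}$, which consists of generating tuples. Choose the threshold so that some $x$ occurs at least $k+1$ times, i.e.\ target coordinate at least $13$ for each $3$-cycle class and at least $7$ for the $(2,2)$ class. Then a copy of $x$ survives in the remainder and generation is automatic.

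Second, the ``main obstacle'' you flag is not actually an obstacle. Surjectivity of $V_{\underline m}/\mathbb{B}_n\to V_{\underline m+ke_i}/\mathbb{B}_{n+k}$ only requires the $i$th coordinate to be large, not the others. Applying it coordinatewise bounds every $|V(a,b,n_2)/\mathbb{B}_n|$ by the maximum over the finite box $a,b\le 12$, $n_2\le 6$, which is trivially finite.

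Hence the normal-form paragraph, the appeal to the Conway--Parker value $1$, and the deferred citations are all unnecessary for (1). The Conway--Parker statements in \cite{liu.woo.zur:24} and \cite{woo:21} also require all multiplicities to be large, so they would not by themselves give the uniform bound. Compared with the paper's one-line citation, your route is longer but self-contained, and it yields a much smaller explicit $\lambda$ directly.
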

\begin{proof}
  The first assertion follows from 
  \cite[Lemma~3.3]{ell.ven:05}.
  We prove the second assertion. Let $n_1=a+b$.
  Recall that, as long as \(n_1, n_2 > 0\), we have \(V(a,b,n_2)\subset V(n_1,n_2) \subset V^{A_4}_n\).
  For \(a,b,n_2 \geq M\), the Conway--Parker theorem (\Cref{thm:CP}) gives a bijection
 \[V(a,b,n_2)/B_n \to U(A_4)_{(a,b,n_2)}\cap \ker(U(A_4) \to A_4).\] 
  The isomorphism \(U(A_4) \cong A_4 \times_{C_3} \ZZ^3 \) from \Cref{prop:UGc} gives
  \[ \ker(U(A_4) \to A_4) = \{(a,b,c) \in \ZZ^3 \mid a \equiv b \pmod 3\}.\]
  The statement follows.  
\end{proof}
\Cref{prop:cpa4} implies that certain geometric components of \(H(n_1,n_2) \otimes \FF_p\) are defined over \(\FF_p\), as we now explain.
Recall that \({\mathbb{B}_n} \times S_4\) orbits of \(V(n_1,n_2)\) correspond to connected components of \(H(n_1,n_2)_{\CC}\).
Given non-negative integers \(a \leq b\), the subset \(V(\{a,b\}, n_2) \subset V(n_1,n_2)\) is a union of \({\mathbb{B}_n} \times S_4\) orbits and therefore corresponds to a union of connected components of \(H(n_1,n_2)_{\CC}\).

\begin{proposition}\label{prop:components-defined-over-Q}
  The union of the connected components of \(H(n_1,n_2)_{\CC}\) corresponding to \(V(\{a,b\}, n_2)\) is defined over \(\QQ\).
  In particular, if \(V(\{a,b\}, n_2)\) forms one orbit under \({\mathbb{B}_n} \times S_4\), then the unique connected component of \(H(n_1,n_2)_{\CC}\) corresponding to this orbit is defined over \(\QQ\).
\end{proposition}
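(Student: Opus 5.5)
We show that the union of components is stable under $\Gal(\overline{\QQ}/\QQ)$. Since $H(n_1,n_2)$ is a finite étale cover of $U(n_1,n_2)$, a $\QQ$-scheme, its geometric components over $\overline{\QQ}$ match those over $\CC$. The Galois group permutes these components, and a union of components is defined over $\QQ$ exactly when this union is Galois-stable. So the task is to describe the Galois action on components in terms of monodromy data. The natural tool is the branch cycle lemma of Fried (see \cite{fri.vol:91}). It says that if $\chi\colon \Gal(\overline{\QQ}/\QQ)\to\hat{\ZZ}^\times$ is the cyclotomic character, then $\gamma$ sends a cover with branch data of inertia classes $(C_1,\dots,C_n)$ to one whose inertia classes are $(C_1^{\chi(\gamma)},\dots,C_n^{\chi(\gamma)})$, up to the induced permutation of branch points. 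For the branch locus we may take a $\QQ$-rational configuration $B$, since $U(n_1,n_2)$ is geometrically connected.

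The key step is to express the invariant separating $V(a,b,n_2)$ from $V(b,a,n_2)$ in these terms. By definition, $a$ and $b$ count the $\sigma_i$ in the $A_4$-conjugacy classes of $(123)$ and $(132)$. The $(2,2)$-class is a single rational class, and $(2,2)$-cycles are fixed under powers prime to $2$. Raising to a power $\chi(\gamma)$ with $\chi(\gamma)\equiv -1 \pmod 3$ swaps the two $A_4$-classes of $3$-cycles, which exchanges $a$ and $b$. Raising to a power with $\chi(\gamma)\equiv 1\pmod 3$ preserves them. Either way, $\gamma$ maps the components corresponding to $V(\{a,b\},n_2)$ into those corresponding to $V(\{a,b\},n_2)$. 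We also need that conjugating the whole tuple by $S_4$ does not change the component. This holds because components are indexed by $\mathbb{B}_n\times S_4$ orbits, so an odd conjugation identifies $V(a,b,n_2)$ with $V(b,a,n_2)$. Hence the union is Galois-stable and therefore defined over $\QQ$.

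The main obstacle is applying the branch cycle lemma cleanly to the refined invariant $(a,b)$ rather than to conjugacy classes alone. If that proves awkward, the fallback is a more intrinsic description. The pair $\{a,b\}$ is determined by the multiset of local ramification behaviour at points of $B_1$, measured relative to a global orientation. That orientation can be taken to be a choice of one of the two $A_4$-orbits, which corresponds to a square root of the discriminant, that is, the quadratic resolvent of the quartic. Galois action can flip this orientation but preserves the unordered pair. Either route gives the first assertion.

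The second assertion is then immediate. If $V(\{a,b\},n_2)$ is a single $\mathbb{B}_n\times S_4$-orbit, it corresponds to one connected component of $H(n_1,n_2)_\CC$. That component is Galois-stable by the first assertion, so it is defined over $\QQ$.
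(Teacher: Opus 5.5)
Your proposal is correct and follows essentially the same route as the paper, which also passes to $\overline{\QQ}$ and records each cover by the multiset of $A_4$-classes of inertia at the points of $B_1$, up to simultaneous $S_4$-conjugation. It then uses the branch cycle argument (citing Wewers) to see that the cyclotomic twist either fixes or swaps the two classes of $3$-cycles, so $\{a,b\}$ is preserved; your restriction to a $\QQ$-rational base configuration $B$ is a harmless variant (every component surjects onto the connected base), and the fallback via the quadratic resolvent is not needed.
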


\begin{proof}
  We may replace \(\CC\) by \(\overline \QQ\).
  Let \(H \subset H(n_1,n_2)_{\overline\QQ}\) be the union of connected components corresponding to \(V(\{a,b\},n_2)\).
  Let us prove that \(H\) is defined over \(\QQ\).
  Consider a \(\overline \QQ\)-point of \(H\) corresponding to the isomorphism class of an \(A_4\)-quartic cover \((C, B_1, B_2, C \xrightarrow{f} \PP^1_{\overline \QQ})\).
  Choose a point \(0 \in \PP^1_{\overline \QQ}\) away from \(B_1\cup B_2\); fix a numbering of \(f^{-1}(0)\); and let
  \[ \mu \colon \pi_1(\PP^1_{\CC} -(B_1\cup B_2)
  , 0) \to S_4\]
  be the monodromy of \(f\).
  Recall that the image of \(\mu\) is \(A_4 \subset S_4\).

  For each \(x \in B_1\), let \(D_x \subset \PP^1_{\CC}\) be a small disc centred at \(x\), and \(0_x \in D_x\) a point other than \(x\).
  Let \(\gamma_x \in \pi_1(\PP^1_{\CC}-(B_1\cup B_2), 0_x)\) be the element represented by an anti-clockwise loop in \(D_x\) around \(x\) based at \(0_x\).
  Recall that a path from \(0\) to \(0_x\) in \(\PP^1_{\CC}-(B_1\cup B_2)\) gives an isomorphism
  \[\pi_1(\PP^1_{\CC}-(B_1\cup B_2), 0_x) \xrightarrow{\sim} \pi_1(\PP^1_{\CC}-(B_1\cup B_2), 0);\]
  two isomorphisms given by different paths are conjugate to each other.
  Thus, the conjugacy class of the image of \(\gamma_x\) in \(\pi_1(\PP^1_{\CC}-(B_1\cup B_2), 0)\) is well defined independently of the path.
  Denote it by \([\gamma_x] \subset \pi_1(\PP^1_{\CC}-(B_1\cup B_2), 0)\).
  Then \([\mu(\gamma_x)] \subset A_4\) is a well-defined conjugacy class of \(A_4\).
  Let \(M(f)\) be the multiset
  \[M(f) = \{[\mu(\gamma_x)] \mid x \in B_{1}\}.\]
  Changing the numbering of \(f^{-1}(0)\) changes the multiset by simultaneous conjugation by an element of \(S_4\).
  The \(f \in H(n_1,n_2)_{\CC}\) lying in $H$ are characterised by the condition that, up to simultaneous conjugation by \(S_4\), we have
  \[ M(f) = \{a \cdot [(123)], b \cdot [(132)]\}.\]
    
  Take \(\varphi \in \Gal(\overline \QQ/\QQ)\) and let \(\chi \in \varprojlim (\ZZ/m\ZZ)^{\times}\) be its cyclotomic character. 
  Then, by \cite[\S~4.3.1]{wew:98}, we have 
  \[ M(f^{\varphi}) = \{[\mu(\gamma_x)]^{\chi} \mid x \in B_1\}.\]
  We see that, up to conjugation by \(S_4\), the multiset \(M(f^{\varphi})\) is also \(\{a \cdot [(123)], b \cdot [(132)]\}\).
  So \(f^{\varphi}\) also lies in \(H\).
  Thus, \(H\) is invariant under \(\operatorname{Gal}(\overline \QQ/\QQ)\) and hence is defined over \(\QQ\).
\end{proof}

\Cref{prop:components-defined-over-Q} enables us to define \(H(\{a,b\}, n_2) \subset H(n_1,n_2)\) as the open and closed subspace such that \(H(\{a,b\},n_2)_{\CC}\) is the union of the connected components of \(H(n_1,n_2)_{\CC}\) corresponding to \(V(\{a,b\}, n_2)\). 

We now have all the tools to apply the Chebotarev density theorem.
Recall from \Cref{prop:sterlingapprox} that \(b_\ell = \sum_{i=1}^\ell2^{-i}s(\ell,i)/\ell!\).
\begin{theorem}\label{thm:failbound}
  Let \(n_1 > 0\) and \(n_{2} \geq 0\) be integers and \(p \geq 5\) a prime.
  Let \(q\) be a power of \(p\) such that all geometric components of \(\widetilde H^{\rm fail}(n_1,n_2) \otimes \FF_p\) are defined over \(\FF_q\).
  Let \(\phi \colon  H^{\rm fail}(n_1,n_2) \to H(n_{1},n_2)\) be the natural forgetful map. 
Let \(H \subset H(n_1,n_2) \otimes \FF_q\) be a (geometric) component and let \(H^{\rm fail}\) be the pre-image of \(H\) in \(H^{\rm fail}(n_1,n_2) \otimes \FF_q\).
  Then,
   \[ \lim_{m \to \infty} \frac{|\phi(H^{\rm fail}(\FF_{q^m}))|}{|H(\FF_{q^m})|} = rb_{n_2},\]
  where $r\leq 2$ is the number of geometric components of \(\widetilde H^{\rm fail}(n_1,n_2) \otimes \FF_q\) lying above $H$. Consequently,
   \[ \lim_{m \to \infty} \frac{|\phi(H^{\rm fail}(n_1,n_2)(\FF_{q^m}))|}{|H(n_1,n_2)(\FF_{q^m})|} \leq 2 b_{n_2}.\]
  
\end{theorem}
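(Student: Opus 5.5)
The plan is to apply \Cref{prop:chebapply} to the \(G\)-torsor \(\widetilde H^{\rm fail}(n_1,n_2)\to H(n_1,n_2)\), where \(G=(\ZZ/2\ZZ)^{n_2}\rtimes S_{n_2}\), restricted over the component \(H\), with \(\Lambda=S_{n_2}\).

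\textbf{Setting up the torsor.} Let \(X\subset \widetilde H^{\rm fail}(n_1,n_2)\otimes\FF_q\) be the preimage of \(H\). Since \(H\) is open and closed, \(X\to H\) is a \(G\)-torsor. By \eqref{eqn:fourspaces}, its quotient by \(S_{n_2}\) is exactly \(H^{\rm fail}\), the preimage of \(H\) in \(H^{\rm fail}(n_1,n_2)\otimes\FF_q\). By hypothesis all geometric components of \(X\) are defined over \(\FF_q\); hence so is \(H\), being the image of one of them, so \(H\) is geometrically connected. Let \(r\) be the number of geometric components of \(X\).

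\textbf{Showing \(r\le 2\).} Combine \Cref{prop:components-complex}, applied to \(H(n_1,n_2)\) and to \(\widetilde H^{\rm fail}(n_1,n_2)\), with \Cref{prop:comp-Hfailtilde}. Reduction gives a bijection on geometric components, compatible with the map between the two spaces, because the specialisation bijection is functorial for finite étale covers of \(\PP^{n_1+n_2}-\Delta\). This transfers the characteristic-zero bound \(r\le2\) to characteristic \(p\).

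\textbf{Computing the limit.} \Cref{prop:chebapply} gives the limit \(\le r|\Phi|/|G|\), with equality if and only if \(\Phi\) lies in the stabiliser \(\mathrm{Stab}\subset G\) of a geometric component \(X_0\) of \(X\). By \Cref{prop:sterling}, \(|\Phi|/|G| = \sum_i 2^{n_2-i}s(n_2,i)/(2^{n_2}n_2!) = b_{n_2}\). For the equality case:
\begin{itemize}
\item \(\mathrm{Stab}\) is normal, because \(G\) acts transitively on at most two components, so \(\mathrm{Stab}\) has index \(1\) or \(2\).
\item By the second part of \Cref{prop:comp-Hfailtilde}, transported to \(\FF_q\) via the same compatibility, \(S_{n_2}\) preserves every component, so \(S_{n_2}\subset\mathrm{Stab}\).
\item Normality then gives \(hS_{n_2}h^{-1}\subset \mathrm{Stab}\) for all \(h\in G\), that is, \(\Phi\subset \mathrm{Stab}\).
\end{itemize}
Hence equality holds, and the limit equals \(r b_{n_2}\).

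The compatibility is the delicate point: the \(S_{n_2}\)-action commutes with reduction, since the action is defined over \(\ZZ[1/6]\) and the specialisation bijection is equivariant for automorphisms of the cover over the base.

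\textbf{The global bound.} Write \(H(n_1,n_2)\otimes\FF_q\) as the disjoint union of its geometric components \(H_1,\dots,H_t\), all defined over \(\FF_q\), and let \(H^{\rm fail}_j\) be the preimage of \(H_j\). Then \(\phi(H^{\rm fail}(n_1,n_2)(\FF_{q^m}))\) is the disjoint union of the sets \(\phi(H_j^{\rm fail}(\FF_{q^m}))\). The ratio is therefore a weighted average of the per-component ratios, with weights \(|H_j(\FF_{q^m})|/|H(n_1,n_2)(\FF_{q^m})|\). By Lang--Weil these weights tend to \(1/t\), since every \(H_j\) is geometrically irreducible of the same dimension \(n_1+n_2\) by \Cref{thm:HoverU}. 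So the limit of the ratio is the average of the values \(r_jb_{n_2}\), and each \(r_j\le 2\), which gives \(\le 2b_{n_2}\).

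I expect the main obstacle to be the equality case, namely verifying that \(\Phi\) lies in a component stabiliser via the normality argument and the transfer of \Cref{prop:comp-Hfailtilde} to characteristic \(p\). The rest is bookkeeping.
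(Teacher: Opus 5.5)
Your proposal is correct and follows essentially the same route as the paper: you apply \Cref{prop:chebapply} to the $G$-torsor over $H$ with $\Lambda=S_{n_2}$, compute $|\Phi|/|G|=b_{n_2}$ via \Cref{prop:sterling}, and obtain equality because the component stabiliser has index at most $2$ (hence is normal) and contains $S_{n_2}$ by \Cref{prop:comp-Hfailtilde}, transported to characteristic $p$ via \Cref{prop:components-complex}. Three parts of your write-up make explicit what the paper compresses into ``summing over all geometric components'': the Lang--Weil weighted-average argument for the global bound, the observation that $H$ is geometrically connected, and the compatibility of the specialisation bijection with the maps and the $S_{n_2}$-action.
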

\begin{proof}
  Let 
  \(\widetilde H^{\rm fail}\) be the pre-image of \(H\) in 
  \(\widetilde H^{\rm fail}(n_1,n_2) \otimes \FF_q\). 
   Let \(G = (\ZZ/2\ZZ)^{n_2} \rtimes S_{n_2}\).
  We know from \eqref{eqn:fourspaces} that the map \(\widetilde H^{\rm fail} \to H\) is a \(G\)-torsor and \(\widetilde H^{\rm fail} \to H^{\rm fail}\) is the quotient by \(S_{n_2} \subset G\). 
  By \Cref{prop:sterling}, the union of the conjugates of \(S_{n_2} \subset G\) has size \(b_{n_2} \cdot |G|\).
  
We will apply the Chebotarev density theorem (\Cref{prop:chebapply}) with $G=(\ZZ/2\ZZ)^{n_2}\rtimes S_{n_2}$ and \(\Phi = \bigcup_{h \in G} h S_{n_2} h^{-1}\). First, we show that $\Phi$ is contained in the stabiliser $G_0$ of a geometric component of $\widetilde H^{\rm fail}$. By Propositions~\ref{prop:components-complex} and~\ref{prop:comp-Hfailtilde}, $[G:G_0]\leq 2$ and $S_{n_2}\subset G_0$. Therefore, $G_0$ is a normal subgroup of $G$ containing $S_{n_2}$, which implies that $\Phi\subset G_0$, as claimed.

 Now, by~\Cref{prop:chebapply}, we get
  \[ \lim_{m \to \infty} \frac{|\phi(H^{\rm fail}(\FF_{q^m}))|}{|H(\FF_{q^m})|} = rb_{n_2},\]
  with $r\leq 2$, thanks to Propositions~\ref{prop:components-complex} and~\ref{prop:comp-Hfailtilde}.
  Summing over all geometric components 
 of \(H(n_1,n_2) \otimes \FF_q\) yields the second statement. 
\end{proof}
\begin{remark} Let \(k\) be a finite field of characteristic \(\neq 2,3\).
  By \Cref{thm:failset}, the image of the $k$-points of \(H^{\rm fail}(n_1,n_2)\) are precisely the isomorphism classes of \(A_4\)-quartic covers of $\PP^1_k$ that fail the Hasse norm principle.
\Cref{thm:failbound} says that, asymptotically in \(m\), the proportion of isomorphism classes of \(A_{4}\)-quartic covers of \(\PP^1_{\FF_{q^m}}\) that fail the Hasse norm principle is bounded above by \(2b_{n_2}\).
By \Cref{prop:sterlingapprox}, \(b_{n_2}\) is within a constant factor of \(n_2^{-1/2}\).
\end{remark}

\begin{corollary}\label{cor:failbound-limit}
  In the set-up of \Cref{thm:failbound}, we have
  \[ \lim_{n_2 \to \infty }\lim_{m \to \infty} \frac{|\phi(H^{\rm fail}(n_1,n_2)(\FF_{q^m}))|}{|H(n_1,n_2)(\FF_{q^m})|} = 0.\]
\end{corollary}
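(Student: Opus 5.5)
The plan is to derive the corollary directly from \Cref{thm:failbound} and \Cref{prop:sterlingapprox}. There is one subtlety. In \Cref{thm:failbound}, \(q\) is required to be a power of \(p\) such that all geometric components of \(\widetilde H^{\rm fail}(n_1,n_2)\otimes\FF_p\) are defined over \(\FF_q\). That \(q\) depends on \(n_2\). So, as in the set-up of the theorem, I read the inner limit for each fixed \(n_2\) with \(q=q(n_2)\) chosen to satisfy this hypothesis. If \(q\) is instead fixed in advance, I would pass to a power \(q^{k}\) for which the hypothesis holds and take the limit along the subsequence \(m\in k\ZZ\). Strictly speaking, the full limit over \(m\) is only guaranteed once \(q\) satisfies the hypothesis, so I would state the corollary in that form, which is what "in the set-up of \Cref{thm:failbound}" means.

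\textbf{Step 1.} For each fixed \(n_2\), \Cref{thm:failbound} gives the following. For every geometric component \(H\) of \(H(n_1,n_2)\otimes\FF_q\), the ratio \(|\phi(H^{\rm fail}(\FF_{q^m}))|/|H(\FF_{q^m})|\) converges to \(r_H b_{n_2}\) with \(r_H\le 2\). Since there are finitely many components, all defined over \(\FF_q\) (they are components of a quotient of \(\widetilde H^{\rm fail}\), so they inherit the field of definition), we have \(|H(n_1,n_2)(\FF_{q^m})|=\sum_H |H(\FF_{q^m})|\) and similarly for the image. The total ratio is therefore a weighted average of the component ratios, and its limit lies between \(0\) and \(2b_{n_2}\). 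In particular the inner limit exists. If one prefers to avoid existence issues, one can work with \(\limsup\) throughout, and the argument is unchanged.

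\textbf{Step 2.} By \Cref{prop:sterlingapprox}, \(b_{n_2}<n_2^{-1/2}\). Hence
\[0\le \lim_{m\to\infty}\frac{|\phi(H^{\rm fail}(n_1,n_2)(\FF_{q^m}))|}{|H(n_1,n_2)(\FF_{q^m})|}\le 2n_2^{-1/2},\]
and letting \(n_2\to\infty\) gives \(0\). This holds uniformly in \(n_1\), which is a bonus.

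The only step I expect to need care is the bookkeeping in Step 1. One must make sure the weighted-average argument is valid, i.e. that the denominators \(|H(\FF_{q^m})|\) are nonzero for large \(m\). This follows from Lang--Weil, since each component is a geometrically irreducible variety defined over \(\FF_q\) and so has \(\sim q^{m(n_1+n_2)}\) points. One must also handle the dependence of \(q\) on \(n_2\) as described above. Everything else is immediate from the two cited results.
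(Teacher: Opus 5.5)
Your proposal is correct and matches the paper's argument: the paper's proof simply combines the bound $2b_{n_2}$ from \Cref{thm:failbound} with \Cref{prop:sterlingapprox}. The paper cites $b_{n_2}\sim n_2^{-1/2}$, whereas you use the inequality $b_{n_2}<n_2^{-1/2}$; your Step~1 is already the second assertion of \Cref{thm:failbound}, and your remarks on $q$ depending on $(n_1,n_2)$ usefully make explicit what the paper leaves implicit.
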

\begin{proof}
  Use \(b_{n_2} \sim n_2^{-1/2}\) from \Cref{prop:sterlingapprox}.
\end{proof}

Recall that \(H(\{a,b\},n_2) \subset H(n_1,n_2)\) is the open and closed subspace corresponding to the set \(V(\{a,b\},n_2)\) (see \Cref{prop:components-defined-over-Q}).
Let \(H^{\rm fail}(\{a,b\},n_2) \subset H^{\rm fail}(n_1,n_2)\) be the pre-image of \(H(\{a,b\},n_2)\).
Under the one-orbit
hypothesis of \Cref{prop:precise-q} below, we can take $q=p$ in \Cref{thm:failbound}. 
By \Cref{prop:cpa4}, the one-orbit hypothesis holds if \(a\), \(b\) and \(n_2\) are sufficiently large.
\begin{proposition}\label{prop:precise-q}
  Suppose \(a,b,n_2\) are such that \(V(\{a,b\},n_2)\) forms one orbit under \({\mathbb{B}_n} \times S_4\).
  Then \(H(\{a,b\}, n_2) \otimes \FF_p\) is geometrically connected and
  \[ \limsup_{m \to \infty} \frac{|\phi(H^{\rm fail}(\{a,b\},n_2)(\FF_{p^m}))|}{|H(\{a,b\},n_2)(\FF_{p^m})|} =\lim_{m \to \infty} \frac{|\phi(H^{\rm fail}(\{a,b\},n_2)(\FF_{p^{2m}}))|}{|H(\{a,b\},n_2)(\FF_{p^{2m}})|}= r b_{n_2},\]
   where $r\leq 2$ is the number of geometric components of \(\widetilde H^{\rm fail}(n_1,n_2) \otimes \FF_p\) lying above \(H(\{a,b\}, n_2) \otimes \FF_p\). 
\end{proposition}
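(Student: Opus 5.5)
The plan is to establish geometric connectedness first, then reduce the limit computation to \Cref{thm:failbound} over \(\FF_{p^2}\), and finally treat odd powers of \(p\) by a coset argument.

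\textbf{Geometric connectedness.} By hypothesis, \(V(\{a,b\},n_2)\) is a single \(\mathbb{B}_n\times S_4\)-orbit. So \(H(\{a,b\},n_2)_{\CC}\) is connected, and by \Cref{prop:components-defined-over-Q} it is defined over \(\QQ\). Since \(H(\{a,b\},n_2)\) is an open and closed subscheme of \(H(n_1,n_2)\), it is a finite étale cover of \(U(n_1,n_2)\). \Cref{prop:components-complex} therefore gives a bijection between geometric components over \(\QQ\) and over \(\FF_p\). Hence \(H(\{a,b\},n_2)\otimes\FF_p\) is geometrically connected. Call it \(H\).

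\textbf{Components of the torsor and the limit along even powers.} Let \(\widetilde H^{\rm fail}\) be the preimage of \(H\) in \(\widetilde H^{\rm fail}(n_1,n_2)\otimes \FF_p\). By \Cref{prop:comp-Hfailtilde} (transported to \(\FF_p\) via \Cref{prop:components-complex}), it has \(r\le 2\) geometric components. The geometric Frobenius \(\Frob_p\) permutes these components, so \(\Frob_p^2\) fixes each of them. Hence all components are defined over \(\FF_{p^2}\). Applying \Cref{thm:failbound} with \(q=p^2\) to the single geometric component \(H\) gives
\[\lim_{m\to\infty}\frac{|\phi(H^{\rm fail}(\{a,b\},n_2)(\FF_{p^{2m}}))|}{|H(\{a,b\},n_2)(\FF_{p^{2m}})|}=rb_{n_2}.\]
Note that \(r\) counts geometric components, so it is the same whether we work over \(\FF_p\) or \(\FF_{p^2}\).

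\textbf{Odd powers, and the main obstacle.} It remains to show that odd powers \(p^m\) do not push the limsup above \(rb_{n_2}\). Let \(G_0\) be the stabiliser of a geometric component. As in the proof of \Cref{thm:failbound}, \(G_0\) is normal in \(G\), contains \(S_{n_2}\), and has index \(r\); consequently \(\Phi\subset G_0\).
\begin{enumerate}
\item If the components are already defined over \(\FF_p\) (automatic when \(r=1\)), then \Cref{prop:chebapply} applies with \(q=p\). The full limit over all \(m\) exists and equals \(rb_{n_2}\).
\item Otherwise \(r=2\) and \(\Frob_p\) swaps the two components. For odd \(m\) and \(y\in H(\FF_{p^m})\), take \(\overline x\) in a component \(X_0\) over \(y\). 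Then \(\Frob_{p^m}\) sends \(\overline x\) into the other component. So the element \(g\) with \(\Frob_{p^m}(\overline x)=g\cdot\overline x\) lies in \(G\setminus G_0\), which is disjoint from \(\Phi\). By the first assertion of \Cref{prop:chebapply}, \(y\) is not in the image of \(\phi\). Hence the ratio is \(0\) for every odd \(m\).
\end{enumerate}
In both cases the limsup over all \(m\) equals the limit along even \(m\), which is \(rb_{n_2}\). I expect the main care to lie in case (2): one must check that the Frobenius class is well defined for a torsor that is connected over \(\FF_p\) but not geometrically connected, and that it really lands in the nontrivial coset of \(G_0\). Both follow from normality of \(G_0\) and the fact that the torsor's geometric components are the translates of \(X_0\) by \(G/G_0\).
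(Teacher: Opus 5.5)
Your proof is correct and follows the paper's argument. Geometric connectedness comes from \Cref{prop:components-complex} and \Cref{prop:components-defined-over-Q}, the limit comes from Chebotarev over $\FF_p$ or $\FF_{p^2}$ as in the proof of \Cref{thm:failbound}, and the ratio vanishes for odd $m$ when $\Frob_p$ swaps the two components. Over $\FF_{p^2}$ you should cite the proof of \Cref{thm:failbound} rather than the theorem itself, since its stated hypothesis concerns all components of $\widetilde H^{\rm fail}(n_1,n_2)\otimes\FF_p$ and only those over $H$ are needed. The only real difference is minor. You get the odd-$m$ vanishing from the Frobenius class lying in the nontrivial coset of the normal subgroup $G_0\supseteq\Phi$, via the first part of \Cref{prop:chebapply}; the paper instead uses \Cref{prop:comp-hfail} to see that $H^{\rm fail}(\{a,b\},n_2)$ has two Frobenius-swapped geometric components and hence no $\FF_{p^m}$-points.
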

\begin{proof}
  The first statement follows from Propositions~\ref{prop:components-complex} and~\ref{prop:components-defined-over-Q}.
  By \Cref{prop:comp-Hfailtilde}, \(\widetilde H^{\rm fail}(n_1,n_2) \otimes \FF_p\) has at most two geometric components over \(H(\{a,b\}, n_2) \otimes \FF_p\).
  If these geometric components are defined over \(\FF_p\) then we apply the Chebotarev density theorem as in the proof of \Cref{thm:failbound}.
  Now suppose they are not defined over \(\FF_p\).
  That is, there are two geometric components and they are interchanged by \(\Frob_p\).
  By \Cref{prop:comp-hfail}, \(H^{\rm fail}(\{a,b\},n_2)\) also has two geometric components and they are interchanged by \(\Frob_p\).
  Then for any odd \(m\), we have \(H^{\rm fail}(\{a,b\},n_2)(\FF_{p^m}) = 0\), so the quantity in the limit is zero.
  For even \(m\), we apply the Chebotarev density theorem as in the proof of \Cref{thm:failbound}.
\end{proof}

\section{Proof of~\Cref{thm: main result}}\label{sec: n to infinity}

When combined with \Cref{thm:failset} and \Cref{prop:sterlingapprox}, \Cref{thm:failbound} and \Cref{cor:failbound-limit} yield analogues of \Cref{thm: main result} and \Cref{Cor: main result} but with the bound in \Cref{thm: main result} replaced by $2n_2^{-\frac{1}{2}}$ and the limit $n\to\infty$ in \Cref{Cor: main result} replaced by $n_2\to \infty$. Our final task is to go from $n_2$ to $n=n_1+n_2$ and thus prove our main results: \Cref{thm: main result} and \Cref{Cor: main result}.

So, without further ado, let \(n\), \(n_1\), and \(n_2\) be non-negative integers such that \(n = n_1+n_2\) and \(n_1 > 0\).
Let \(H(n)\) be the disjoint union
\begin{equation}\label{eq:disjoint}
    H(n) =  \bigsqcup_{\substack{n_1+n_2 = n \\ n_1 > 0, n_2 \geq 0}} H(n_1,n_2).
\end{equation} 
Define \(H^{\rm fail}(n)\) analogously.
Our goal in this section is to combine the bounds in \Cref{sec:comphurwitz} to obtain a bound on 
\[ \lim_{m \to \infty} \frac{|\phi(H^{\rm fail}(n)(\FF_{p^m}))|}{|H(n)(\FF_{p^m})|}.\]

Recall that \(v(n_1,n_2)\) is the number of connected components of \(H(n_1,n_2)_{\CC}\).
Equivalently, it is the number of \({\mathbb{B}_n} \times S_4\)-orbits of the set \(V(n_1,n_2)\) defined in \eqref{def:V}.
Given non-negative integers \(a, b\) with \(a+b = n_1\), let \(v(\{a,b\},n_2)\) be the number of \({\mathbb{B}_n} \times S_4\)-orbits of the set \(V(\{a,b\},n_2)\) defined in \eqref{def:Vab}. 
We estimate \(\sum v(\{a,b\},n_2)\).
For two functions \(f,g \colon \ZZ_{\geq 0} \to \RR\), we write \(f \approx g\) if \[\lim_{n \to \infty} f(n)/g(n) = 1.\]

\begin{lemma}\label{lem:estimates}
  Fix an integer \(m\).
  We have
  \[ \sum_{\substack{a+b+n_2 = n\\ a,b,n_2 \geq m,\ a \leq b}}v(\{a,b\},n_2) \approx n^2/12.\]
  Furthermore, for \(0 < \alpha < 1\), we have
  \[ \sum_{\substack{a+b+n_2 = n\\a,b,n_2 \geq m,\ a \leq b\\n_2 < n^{\alpha}}} v(\{a,b\},n_2) \approx n^{\alpha+1}/6.\]
\end{lemma}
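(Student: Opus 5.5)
The plan is to compute \(v(\{a,b\},n_2)\) exactly for all triples \((a,b,n_2)\) with every entry large, and to show that the remaining triples contribute only \(O(n)\). Let \(M\) be the constant of \Cref{prop:cpa4}(2) and set \(m' = \max(m,M)\). The triples \((a,b,n_2)\) with \(a+b+n_2=n\) and \(\min(a,b,n_2) < m'\) number \(O(n)\), with an implied constant depending only on \(m'\): if one coordinate is fixed, the other two are determined by a single free parameter.

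For each such triple, \(V(\{a,b\},n_2)\) is the union of two sets, each with at most \(\lambda\) \(\mathbb{B}_n\)-orbits by \Cref{prop:cpa4}(1). Hence
\[ v(\{a,b\},n_2) \leq 2\lambda, \]
because \(\mathbb{B}_n\times S_4\)-orbits are unions of \(\mathbb{B}_n\)-orbits. The triples with some coordinate below \(m'\) therefore contribute \(O(n)\) to the first sum, which is \(o(n^2)\). In the second sum they also contribute \(O(n)\), which is \(o(n^{\alpha+1})\) because \(\alpha > 0\). So in both sums we may assume \(a,b,n_2 \geq m'\).

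Now take \(a,b,n_2 \geq M\). If \(a \not\equiv b \pmod 3\), then \(V(a,b,n_2)\) and \(V(b,a,n_2)\) are both empty by \Cref{prop:cpa4}(2), so \(v(\{a,b\},n_2) = 0\). Suppose instead that \(a \equiv b \pmod 3\).
\begin{itemize}
\item If \(a \neq b\), each of \(V(a,b,n_2)\) and \(V(b,a,n_2)\) is a single \(\mathbb{B}_n\)-orbit. An odd permutation in \(S_4\) interchanges the two sets, so their union is a single \(\mathbb{B}_n\times S_4\)-orbit.
\item If \(a = b\), then \(V(\{a,a\},n_2) = V(a,a,n_2)\) is already a single \(\mathbb{B}_n\)-orbit, and it is \(S_4\)-stable.
\end{itemize}
In both cases \(v(\{a,b\},n_2) = 1\). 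Hence, up to an error of \(O(n)\), each sum equals the number of triples \((a,b,n_2)\) with \(a+b+n_2 = n\), \(a \leq b\), \(a \equiv b \pmod 3\) and the stated range conditions.

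The remaining step is elementary counting. Fix \(n_1 = a+b\), so \(n_2 = n - n_1\). The pairs \((a,b)\) with \(a \leq b\) correspond to \(a \in [0, n_1/2]\). The congruence \(a \equiv b \pmod 3\) is equivalent to \(2a \equiv n_1 \pmod 3\), which picks out a single residue class of \(a\) modulo \(3\). So the number of admissible pairs is \(n_1/6 + O(1)\), uniformly in \(n_1\), and the cutoff \(a \geq m'\) changes this only by \(O(1)\).

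For the first sum, summing \(n_1/6 + O(1)\) over \(n_1 \leq n\) gives \(n^2/12 + O(n)\), so the sum is \(\approx n^2/12\). For the second sum, \(n_2 < n^{\alpha}\) forces \(n_1 = n - n_2 = n(1+o(1))\). There are about \(n^{\alpha}\) admissible values of \(n_2\), each contributing \(n/6 + O(n^{\alpha})\) pairs, for a total of \(n^{\alpha+1}/6\,(1+o(1))\).

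The only delicate point is the dichotomy \(a \neq b\) versus \(a = b\) in the orbit count. It relies on the odd permutations swapping \(V(a,b,n_2)\) and \(V(b,a,n_2)\), as recorded after \eqref{def:Vab}. Everything else is bookkeeping with error terms of size \(O(n)\).
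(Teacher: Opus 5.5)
Your proof is correct and takes essentially the same route as the paper. You bound the $O(n)$ triples with a small coordinate using Proposition~\ref{prop:cpa4}(1), reduce the rest via Proposition~\ref{prop:cpa4}(2) to counting triples with $a\equiv b \pmod 3$, and then count; you also spell out two points the paper leaves as ``easy to check'': that $v(\{a,b\},n_2)=1$ follows from the $\mathbb{B}_n$-orbit count because odd permutations swap $V(a,b,n_2)$ and $V(b,a,n_2)$, and the explicit counts giving $n^2/12$ and $n^{\alpha+1}/6$.
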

\begin{proof}
  From \Cref{prop:cpa4} it follows that there is a constant $\lambda$ such that
  \begin{equation}\label{eqn:constbound}
    v(\{a,b\},n_2) \leq \lambda,
  \end{equation}
  and there is a constant \(M\) such that if \(a,b, n_2 \geq M\), then
  \begin{equation}\label{eqn:modularbound}
    v(\{a,b\},n_2) = \begin{cases} 1 & \text{ if } a \equiv b \pmod 3 \\
      0 &\text{ otherwise.}\end{cases}
  \end{equation}
  We can thus break up the sum into two parts: the sum of the \(v(\{a,b\},n_2)\) where at least one of \(a,b,n_2\) is less than \(M\) and the sum of the \(v(\{a,b\},n_2)\) where all of \(a,b,n_2\) are at least \(M\).
  Since \(a+b+n_2 = n\), we see that the number of terms in the first part is a linear function in \(n\), and by \eqref{eqn:constbound}, their sum is bounded by a linear function in \(n\).
  Therefore, it suffices to show that the second part has the stated asymptotics.
  The second part is the sum   
  \begin{equation}\label{eqn:bigsum}
    \sum_{\substack{a+b+n_2 = n\\ a,b,n_2 \geq M,\ a \leq b}} v(\{a,b\},n_2).
  \end{equation}
  By \eqref{eqn:modularbound} this sum is the cardinality of the set
  \[ \{(a,b,n_2) \in \ZZ^3 \mid a \equiv b \pmod 3,\  a+b+n_2 = n,\  a \leq b,\  \text{ and } a,b,n_2 \geq M\}.\]
  It is easy to check that this cardinality is \(\approx n^2/12\).

  For the second assertion, we replace \eqref{eqn:bigsum} by
  \begin{equation}\label{eqn:bigsum2}
    \sum_{\substack{a+b+n_2 = n\\a,b,n_2 \geq M,\ a \leq b \\n_2 < n^{\alpha}}} v(\{a,b\},n_2).
  \end{equation}
  By \eqref{eqn:modularbound} this sum is the cardinality of the set
  \[ \{(a,b,n_2) \in \ZZ^3 \mid a \equiv b \pmod 3,\  a+b+n_2 = n,\  a\leq b,\  n_2 < n^{\alpha} \text{ and } a,b,n_2 \geq M\}.\]
  Again, it is easy to check that this cardinality is \(\approx n^{\alpha+1}/6.\)
\end{proof}

We now have the tools to prove the main result.
\begin{theorem}\label{thm: failbound general n}
  Let \(p \geq 5\) be a prime.
    Given \(\epsilon > 0\), if \(n\) is sufficiently large, then 
  \[ \limsup_{m \to \infty} \frac{|\phi(H^{\rm fail}(n)(\FF_{p^m}))|}{|H(n)(\FF_{p^m})|} \leq (4+\epsilon) n^{-1/3}.\]
\end{theorem}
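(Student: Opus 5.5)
We decompose \(H(n)\) into the open and closed pieces \(H(\{a,b\},n_2)\) with \(a\leq b\), \(a+b+n_2=n\), \(a+b>0\), and use Lang--Weil to compare point counts. Every geometric component of \(H(n)\otimes\FF_p\) has dimension \(n\). By \Cref{prop:components-complex}, the number of geometric components of \(H(\{a,b\},n_2)\otimes\FF_p\) equals \(v(\{a,b\},n_2)\).

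The components that are geometrically connected and defined over \(\FF_p\) contribute \(p^{mn}(1+o(1))\) points each. Components not defined over \(\FF_p\) contribute \(O(p^{m(n-1/2)})\) for suitable \(m\), but we only need a lower bound on the denominator and an upper bound on the numerator.

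For the denominator, \Cref{prop:cpa4} and \Cref{prop:components-defined-over-Q} show that every triple with \(a,b,n_2\geq M\) and \(a\equiv b\pmod 3\) gives a geometrically connected \(H(\{a,b\},n_2)\) defined over \(\FF_p\). Hence
\[|H(n)(\FF_{p^m})|\geq p^{mn}\Bigl(\textstyle\sum_{a,b,n_2\geq M,\,a\leq b} v(\{a,b\},n_2)\Bigr)(1+o(1)),\]
and by \Cref{lem:estimates} this sum is \(\approx n^2/12\).

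For the numerator, we bound the total number of \(\FF_{p^m}\)-points of all geometric components by \(p^{mn}\) times the number of components, up to \(1+o(1)\). We then split the triples into three families.

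\begin{enumerate}
\item Triples with some entry less than \(M\). There are \(O(n)\) of them, each with at most \(\lambda\) components by \eqref{eqn:constbound}. Their total contribution is \(O(n\,p^{mn})\), which is negligible against \(n^2p^{mn}\).
\item Triples with \(a,b,n_2\geq M\) and \(n_2<n^{\alpha}\). We bound the failing proportion trivially by \(1\). By \Cref{lem:estimates} there are \(\approx n^{1+\alpha}/6\) components, contributing \(p^{mn}n^{1+\alpha}/6\).
\item Triples with \(a,b,n_2\geq M\) and \(n_2\geq n^{\alpha}\). Each such \(H(\{a,b\},n_2)\) is geometrically connected over \(\FF_p\). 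By \Cref{prop:precise-q}, its failing proportion has \(\limsup\) at most \(2b_{n_2}<2n_2^{-1/2}\leq 2n^{-\alpha/2}\), using \Cref{prop:sterlingapprox}. With \(\approx n^2/12\) such components, the contribution is at most \(p^{mn}(n^2/12)\cdot 2n^{-\alpha/2}(1+o(1))\).
\end{enumerate}

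Dividing the numerator by the denominator \(\approx p^{mn}n^2/12\) gives
\[\limsup_{m\to\infty}\frac{|\phi(H^{\rm fail}(n)(\FF_{p^m}))|}{|H(n)(\FF_{p^m})|}\leq (1+o(1))\Bigl(2n^{\alpha-1}+2n^{-\alpha/2}\Bigr)+O(n^{-1}).\]
Choosing \(\alpha=2/3\) balances the two main terms: both equal \(2n^{-1/3}\), for a total of \((4+o(1))n^{-1/3}\). For \(n\) sufficiently large this is at most \((4+\epsilon)n^{-1/3}\).

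The main obstacle is justifying the exchange of \(\limsup_m\) with the finite sum over components, and making the Lang--Weil error terms uniform. Since \(n\) is fixed there are finitely many components, so taking \(m\to\infty\) for each term separately is legitimate. The \(\limsup\) of a sum is at most the sum of the \(\limsup\)s, and the denominator has a genuine \(\liminf\) lower bound. We must also make sure that components not defined over \(\FF_p\) (for instance those in family 1) are only ever bounded above. This is harmless, because for those components we use the trivial ratio bound \(1\) together with Lang--Weil's upper bound \(\leq c\,p^{mn}\).
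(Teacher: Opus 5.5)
Your proposal is correct and follows the paper's proof essentially step for step. It uses the same three-way split of the triples $(a,b,n_2)$: some entry below $M$; all entries at least $M$ with $n_2<n^{\alpha}$; and $a,b\geq M$ with $n_2\geq n^{\alpha}$. The last family is handled with Proposition~\ref{prop:precise-q} and Proposition~\ref{prop:sterlingapprox}, the component counts come from Lang--Weil and Lemma~\ref{lem:estimates}, and the choice is $\alpha=2/3$.

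One small point in your favour: for the first two families you bound $|\phi(H^{\rm fail}(\{a,b\},n_2)(\FF_{p^m}))|$ by $|H(\{a,b\},n_2)(\FF_{p^m})|$, which gives the Lang--Weil constant $v(\{a,b\},n_2)$ directly. The paper instead passes through $|H^{\rm fail}(\{a,b\},n_2)(\FF_{p^m})|$, and that space may a priori have up to $2v(\{a,b\},n_2)$ geometric components, so your bound is the one that cleanly supports the constant $4$.
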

\begin{proof}
  All the summations that follow are over \(a,b,n_2\) satisfying \(a+b+n_2 = n\), $n_2\geq 0$ and \(a+b > 0\) and \(a \leq b\).
  Under the summation signs, we drop these conditions and only write additional conditions.
  Fix \(\alpha\) to be any real number in \((0,1)\).
  Let \(M\) be an integer such that \(v(\{a,b\},n_2) = 1\) for \(a,b,n_2 \geq M\) (see \Cref{prop:cpa4}).
  Using~\eqref{eq:disjoint} and
  \[H^{\rm fail}(n) = \bigsqcup_{\substack{a+b+n_2 = n\\ n_2\geq 0 \\ a+b > 0, a \leq b}} H^{\rm fail}(\{a,b\},n_2),\]
  when $n^\alpha\geq M$, we have
  \begin{equation}\label{eqn:3parts}
    \begin{split}
      |\phi(H^{\rm fail}(n)(\FF_{p^m}))| &= \sum_{\substack{a,b \geq M\\ n_2 \geq n^{\alpha}}}|\phi(H^{\rm fail}(\{a,b\},n_2)(\FF_{p^m}))| \\
                                        &+ \sum_{\substack{a, b, n_2 \geq M\\ n_2 < n^{\alpha}}} |\phi (H^{\rm fail}(\{a,b\},n_2)(\FF_{p^m}))| \\
                                        &+ \sum_{a,b,\text{ or } n_2 < M} |\phi (H^{\rm fail}(\{a,b\},n_2)(\FF_{p^m}))|.
    \end{split}
\end{equation}
We consider the three parts of the sum separately, in sequence.

  Let us consider the first part.
  If \(n_2 \geq n^{\alpha}\) and \(a,b \geq M\), \Cref{prop:precise-q} and \Cref{prop:sterlingapprox} imply 
  \[ \limsup_{m \to \infty} \frac{|\phi(H^{\rm fail}(\{a,b\},n_2)(\FF_{p^m}))|}{|H(\{a,b\},n_2)(\FF_{p^m})|}  < 2 n^{-\alpha/2} .\]
  Therefore,    \begin{equation}\label{eqn:part1}
    \limsup_{m \to \infty} \frac{\sum_{\substack{a,b \geq M\\ n_2 \geq n^{\alpha}}}|\phi(H^{\rm fail}(\{a,b\},n_2)(\FF_{p^m}))|}{|H(n)(\FF_{p^m})|}  < 2 n^{-\alpha/2} .
  \end{equation}

  Now let us consider the second and third parts.
  We have the bound
  \[|\phi (H^{\rm fail}(\{a,b\},n_2)(\FF_{p^m}))| \leq |H^{\rm fail}(\{a,b\},n_2)(\FF_{p^m})|.\]
  By the Lang--Weil bounds, we have
  \[ \limsup_{m \to \infty} \frac{|H^{\rm fail}(\{a,b\},n_2)(\FF_{p^m})|}{p^{mn}} \leq v(\{a,b\},n_2).\]
  Therefore, for $n$ sufficiently large, we have
\begin{equation}\label{eqn:part2}
    \begin{split}
      \limsup_{m \to \infty} &\frac{1}{p^{mn}}\left(\sum_{\substack{a, b, n_2 \geq M\\ n_2 < n^{\alpha}}} |\phi (H^{\rm fail}(\{a,b\},n_2)(\FF_{p^m}))| + \sum_{a,b,\text{ or } n_2 < M} |\phi (H^{\rm fail}(\{a,b\},n_2)(\FF_{p^m}))| \right) \\
      & \leq \sum_{\substack{a, b, n_2 \geq M\\ n_2 < n^{\alpha}}} v(\{a,b\},n_2) + \sum_{a,b,\text{ or } n_2 < M} v(\{a,b\},n_2) \approx n^{\alpha+1}/6.
    \end{split}
  \end{equation}

  The last approximation follows from \Cref{lem:estimates} applied to the first sum.
  By \Cref{prop:cpa4}, the second sum is much smaller---bounded by a multiple of \(n\).

  By Propositions~\ref{prop:cpa4} and~\ref{prop:precise-q}, the components \(H(\{a,b\},n_2) \otimes \FF_p\) of \(H(n) \otimes \FF_p\) for \(a,b,n_2 \geq M\) are geometrically connected, and again by the Lang--Weil bounds, we have
  \begin{equation}\label{eqn:part3}
    \liminf_{m \to \infty} \frac{1}{p^{mn}} |H(n)(\FF_{p^m})| \geq \sum_{a,b,n_2 \geq M} v(\{a,b\},n_2) \approx n^2/12.
  \end{equation}
  The last approximation is again thanks to \Cref{lem:estimates}.

  By combining the inequalities \eqref{eqn:part1}, \eqref{eqn:part2}, and \eqref{eqn:part3}, we get
  \begin{equation}
    \begin{split}
      \limsup_{m \to \infty} \frac{|\phi(H^{\rm fail}(n)(\FF_{p^m}))|}{|H(n)(\FF_{p^m})|} &< 2n^{-\alpha/2} + \frac{\sum_{\substack{a, b, n_2 \geq M\\ n_2 < n^{\alpha}}} v(\{a,b\},n_2) + \sum_{a,b,\text{ or } n_2 < M} v(\{a,b\},n_2)}{\sum_{a,b,n_2 \geq M}v(\{a,b\},n_2)}\\
      &\approx 2n^{-\alpha/2} + 2 n^{\alpha - 1}.
    \end{split}
  \end{equation}
Taking \(\alpha = 2/3\) yields the result.
\end{proof}

\begin{remark}\label{rem:suff large}
An inspection of the proof of \Cref{thm: failbound general n} shows that determining an explicit bound beyond which $n$ is considered ``sufficiently large'' would involve determining the constants $\lambda$ and $M$ from \Cref{prop:cpa4} and then obtaining more precise estimates for the sums in \Cref{lem:estimates}. The proof of~\cite[Lemma~3.3]{ell.ven:05} shows that one can take $\lambda=|A_4|^{|A_4|^2}=12^{12^2}$. It is less clear how to make $M$ explicit.
\end{remark}

\begin{proof}[Proof of~\Cref{thm: main result}]
  Combine \Cref{thm:failset}, (\ref{eq:disjoint}) and \Cref{thm: failbound general n}. 
\end{proof}

\bibliographystyle{plain}
\bibliography{ARRVref}

\end{document}